\title{Double quasi-Poisson algebras are pre-Calabi-Yau}
\author{David Fern\'andez and Estanislao Herscovich}
\date{}
\colorlet{mycyan}{cyan!40!gray}
\colorlet{myblue}{blue!40!gray}
\colorlet{myred}{red!40!gray}
\colorlet{mygreen}{green!20!gray}
\definecolor{ultramarine}{RGB}{0,32,96}
\definecolor{light-gray}{gray}{0.8}
\colorlet{myultramarine}{ultramarine!20!gray}
\newtheorem{theorem}{Theorem}[section]
\newtheorem{proposition}[theorem]{Proposition}
\newtheorem{definition}[theorem]{Definition}
\newtheorem{lemma}[theorem]{Lemma}
\newtheorem{remark}[theorem]{Remark}
\newtheorem{fact}[theorem]{Fact}
\DeclareFontFamily{U}{BOONDOX-calo}{\skewchar\font=45 }
\DeclareFontShape{U}{BOONDOX-calo}{m}{n}{
  <-> s*[1.05] BOONDOX-r-calo}{}
\DeclareFontShape{U}{BOONDOX-calo}{b}{n}{
  <-> s*[1.05] BOONDOX-b-calo}{}
\DeclareMathAlphabet{\mathcalboondox}{U}{BOONDOX-calo}{m}{n}
\SetMathAlphabet{\mathcalboondox}{bold}{U}{BOONDOX-calo}{b}{n}
\DeclareMathAlphabet{\mathbcalboondox}{U}{BOONDOX-calo}{b}{n}
\newcommand{\lr}[1]{
  \{\mkern-6mu\{#1\}\mkern-6mu\}}
\numberwithin{equation}{section}
\newcommand{\gan}{\text{{\mxedc g}}}
\def\para{{\tau}}
\newcommand\ZZ{{\mathbb{Z}}}
\newcommand\NN{{\mathbb{N}}}
\newcommand\RR{{\mathbb{R}}}
\newcommand{\kk}{{\Bbbk}} 
\newcommand{\D}{\operatorname{\mathbb{D}er}}
\newcommand{\Der}{\operatorname{Der}}
\newcommand{\out}{\operatorname{out}}
\newcommand{\E}{\operatorname{E}}
\newcommand{\GL}{\operatorname{GL}}
\newcommand{\Rep}{\operatorname{Rep}}
\def\Hom{{\mathrm {Hom}}}
\begin{document}

\maketitle
                                                     
\hrulefill
\selectlanguage{english}
\begin{abstract} 
In this article we prove that double quasi-Poisson algebras, which are noncommutative analogues of quasi-Poisson manifolds, naturally give rise to pre-Calabi-Yau algebras. 
This extends one of the main results in \cite{IK17}, where a correspondence between certain pre-Calabi-Yau algebras and double Poisson algebras was found (see also \cite{IKV19}, \cite{IK19} and \cite{FH19}). 
However, a major difference between the pre-Calabi-Yau algebra constructed in the mentioned articles and the one constructed in this work is that the higher multiplications indexed by even integers of the underlying $A_{\infty}$-algebra structure of the pre-Calabi-Yau algebra associated to a double quasi-Poisson algebra do not vanish, but are given by nice cyclic expressions multiplied by explicitly determined coefficients involving the Bernoulli numbers.
\end{abstract}

\textbf{Mathematics subject classification 2010:} 16E45, 14A22, 17B63, 18G55.

\textbf{Keywords:} double quasi-Poisson algebras, $A_{\infty}$-algebras, cyclic $A_{\infty}$-algebras, pre-Calabi-Yau algebras, Bernoulli numbers.

\hrulefill

\section{Introduction}

Quasi-Hamiltonian and quasi-Poisson geometry were introduced in \cite{AMM} and \cite{AKSM}, respectively, to give an alternative finite-dimensional construction of some symplectic moduli spaces originally obtained by symplectic reduction from infinite-dimensional symplectic manifolds; for instance, the moduli space of gauge equivalence classes of flat connections on a principal bundle over a compact Riemann surface. 
On the other hand, interested in the existence of a natural Poisson structure on multiplicative quiver varieties, M. Van den Bergh developed in \cite{VdB} a noncommutative version of quasi-Poisson geometry, later introducing in \cite{q-VdB} the notion of \textbf{\textcolor{myblue}{quasi-bisymplectic algebras}}. 
Roughly speaking, given an associative (but not necessarily commutative) algebra $A$, a \textbf{\textcolor{myblue}{double quasi-Poisson bracket}} is a linear map ${\lr{ \hskip 0.6mm , } : A^{\otimes 2} \rightarrow A^{\otimes 2}}$ satisfying skew-symmetry, the Leibniz identity and a modified double Jacobi identity (see Definition \ref{double-quasi-poisson-def}). 
This version of the Jacobi identity constitutes the main feature of double quasi-Poisson algebras. 
As Van den Bergh explained in \cite{VdB}, even though he was mostly interested in double quasi-Poisson algebras, 
he also needed to introduce and study the most fundamental notion of \textbf{\textcolor{myblue}{double Poisson algebra}} (see Definition \ref{def-double-Poisson}), which is the noncommutative analogue of a Poisson structure. 
In this sense, double Poisson algebras and double quasi-Poisson algebras are the mainstays of the approach to noncommutative Poisson geometry initiated in \cite{VdB}.
 
Since then, double quasi-Poisson algebras have been used in several different contexts.
For instance, in \cite{MT} G. Massuyeau and V. Turaev defined a canonical double quasi-Poisson bracket on the fundamental group $\pi$ of an oriented surface with base point on its boundary, which induces the well-known \textbf{\textcolor{myblue}{Fock-Rosly Poisson structure}} on $\Hom(\pi,G)/G$. 
On the other hand, motivated by the use of quasi-Hamiltonian reduction in the theory of integrable systems, O. Chalykh and M. Fairon used certain double quasi-Poisson structures on path algebras of quivers to give a new perspective on variants of the \textbf{\textcolor{myblue}{Ruijsenaars-Schneider model}} (see \cite{CF}). 
In \cite{Art}, S. Artomonov found an interesting link between double quasi-Poisson structures and (noncommutative) cluster algebras, introducing double quasi-Poisson brackets on categories. 
Finally, double quasi-Poisson brackets also appeared in the setting of the Kashiwara-Vergne problem in Lie theory (see \cite{AKKN}), and in P. Boalch's higher fission spaces to build spaces of Stokes representations (see \cite{Boalch}).

On the other hand, in the setting of Topological Quantum Field Theory, M. Kontsevich and Y. Vlassopoulos \cite{KV18} introduced the notion of \textbf{\textcolor{myblue}{$d$-pre-Calabi-Yau algebras}}.
These structures (or equivalent ones) have appeared in other works under different names, such as $V_{\infty}$-algebras in \cite{TZ16}, $A_{\infty}$-algebras with boundary in \cite{Sei12}, noncommutative divisors in \cite{Sei17}, Rmk. 2.11, or weak Calabi-Yau structures in \cites{Ko13,Ye18,KPS17}. 
Roughly speaking, a pre-Calabi-Yau algebra is a noncommutative analogue of a solution to the Maurer-Cartan equation for the Schouten bracket on polyvector fields; this idea has been formalized and successfully applied in \cite{IK19} and \cite{Ye18}.
Equivalently (see \cite{IK17}), pre-Calabi-Yau algebras are (quasi-)cyclic $A_{\infty}$-algebra structures on $A \oplus A^{\#}[d-1]$ for the natural bilinear form of degree $d-1$ such that $A$ is an $A_{\infty}$-subalgebra (see Definition \ref{definition-pre-d-CY}, for the case of $d=0$). 
Quite strikingly, N. Iyudu and M. Kontsevich showed in \cite{IK17} (see also \cite{IKV19}) that there exists an explicit one-to-one correspondence between a particular class of pre-Calabi-Yau algebras and that of non-graded double Poisson algebras. 
For a very interesting conceptual explanation of this correspondence in terms of the higher cyclic Hochschild cohomology and its generalized necklace bracket, see \cite{IK19}.
On the other hand, the bijection in \cite{IK17} was extended to the differential graded setting in \cite{FH19}, Thm. 5.1, 
and to double $P_{\infty}$-algebras in \cite{FH19}, Thm. 6.3.

In the light of the previous results, it is natural to ask whether there exists a link between pre-Calabi-Yau algebras and double quasi-Poisson algebras. 
The main goal of the article is to show that the answer is affirmative: double quasi-Poisson algebras do naturally give rise to pre-Calabi-Yau algebras. 
However, there is a major difference between the pre-Calabi-Yau algebra structure coming from a double quasi-Poisson algebra and the one constructed by N. Iyudu and M. Kontsevich, which is induced from a double Poisson algebra. 
Indeed, all the higher multiplications $m_{n}$ indexed by even integers $n$ of the underlying $A_{\infty}$-algebra structure of the pre-Calabi-Yau algebra associated with a double quasi-Poisson algebra do not vanish, but they are given by nice cyclic expressions multiplied by explicitly determined coefficients involving the Bernoulli numbers (see \eqref{eq:newcij}, \eqref{eq:mu2} and \eqref{eq:mun}). 
In fact, the multiplication $m_4$ of the pre-Calabi-Yau structure codifies the nonhomogeneous term in the modified double Jacobi identity (\textit{i.e.} the right-hand side of \eqref{eq:double-quasi-poisson-def}),  
in marked contrast with the situation considered in \cite{IKV19}, Thm. 4.2, or \cite{IK19}, Thm. 4.1, where the authors assume that $m_{4}$ vanishes.  
This has of course major consequences: a double quasi-Poisson algebra induces a quasi-Poisson algebra structure on the representation scheme, whereas a double Poisson algebra induces a plain Poisson algebra on the representation scheme.

The contents of the article are as follows. 
After fixing some notation and conventions, as well as recalling some sign and numeric conventions in Section \ref{section:preliminaries}, we briefly review in Section \ref{section:quasi-Poisson} the basic notions of quasi-Poisson manifolds, introduced by A. Alekseev, Y. Kosmann-Schwarzbach and E. Meinrenken in \cite{AKSM}, as well as their noncommutative versions, introduced by M. Van den Bergh in \cite{VdB}. 
In Section \ref{section:cyclic-pre-CY} we recall the basic definition of (quasi-cyclic) $A_{\infty}$-algebras, which are well-known in the literature, and of pre-Calabi-Yau algebras, that we mentioned previously. 
Finally, in Section \ref{section:core} we state and prove our main result: any double quasi-Poisson algebra gives rise to a pre-Calabi-Yau algebra (see Theorem \ref{theorem:Main}), extending the result proved by N. Iyudu and M. Kontsevich (see \cite{IK17}, Thm. 4.2, but also \cite{IKV19}, Thm. 4.2). 
The proof consists in verifying that some involved expressions for the higher multiplications do satisfy all the Stasheff identities.  
This is done by handling the cases given by the many different elements on which they can be evaluated. 

Some words concerning the main formulas in the paper are in order. 
The reader might ask how we found the somewhat curious expressions \eqref{eq:mu2}-\eqref{eq:mun} for the higher multiplications of the pre-Calabi-Yau algebra associated to a double quasi-Poisson algebra, which involve the unexpected constants \eqref{eq:newcij}. 
Unfortunately, we do not have any structural explanation so far: both expressions \eqref{eq:mu2}-\eqref{eq:mun} were found after computing a ridiculous amount of Stasheff identities and recursively proposing compatible higher multiplications to verify them. 
Once the main structure of the higher multiplications was determined, the expressions of the constants \eqref{eq:newcij} were obtained after recursively computing a large number of them by means of \eqref{eq:Cgen} using GAP and isolating a pattern, which involved solving recursive quadratic equations. 
The whole process is not explained in the article and it was rather tedious. 
It remains to know if a more conceptual approach can be achieved, as for instance in the case of the Lawrence-Sullivan dg Lie algebra (\textit{cf.} \cites{LS14, BCM17}), although it does not seem to be clear for us, even in the case of double Poisson algebras. 
In addition, note that the Bernoulli numbers also appeared in the exponentiation construction in quasi-Poisson geometry via solutions of the classical dynamical Yang-Baxter equation (see \cite{AKSM}, Section 7). 
We hope that our results would yield a better understanding of double quasi-Poisson algebras.
 
\paragraph*{Acknowledgments.} The first author is supported by the Alexander von Humboldt Stiftung in the framework of an Alexander von Humboldt professorship endowed by the German Federal Ministry of Education and Research. 
The second author was supported by the GDRI ``Representation Theory'' 2016-2020 and the BIREP group, 
and is deeply thankful to Henning Krause and William Crawley-Boevey for their hospitality at the University of Bielefeld. 
We would like to thank Natalia Iyudu for pointing out reference \cite{IK19}. 
Finally, we would like to deeply thank the referees for the careful reading of the manuscript, as well as the comments and suggestions 
that improved the quality of this article. 

\section{Notations and conventions}
\label{section:preliminaries}

We will use the same notations and conventions as in \cite{FH19}, Section 2, but for the reader's convenience we recall the most important ones.  
In what follows, $\kk$ will denote a field of characteristic zero. 
We recall that, if $V = \oplus_{n \in \ZZ} V^{n}$ is a (cohomological) graded vector space, $V[m]$ is the graded 
vector space over $\kk$ whose $n$-th homogeneous component $V[m]^{n}$ is given by $V^{n+m}$, for all $n, m \in \ZZ$. 
It is called the \textbf{\textcolor{myblue}{shift}} of $V$. 
Given a nonzero element $v \in V^{n}$, we will denote $|v| = n$ the \textbf{\textcolor{myblue}{degree}} of $v$. 
If we refer to the degree of an element, we will be implicitly assuming that it is nonzero and homogeneous. 
We recall that a \textbf{\textcolor{myblue}{morphism}} $f : V \rightarrow W$ of graded vector spaces of degree $d \in \ZZ$ is a homogeneous linear map of degree $d$, \textit{i.e.} $f(V^{n}) \subseteq W^{n+d}$ for all $n \in \ZZ$. We will denote by $\mathcal{H}om(V,W)$ the graded vector space whose 
component of degree $d$ is formed by all morphisms from $V$ to $W$ of degree $d$. 
If $W = \kk$, we will denote $\mathcal{H}om(V,\kk)$ by $V^{\#}$. 

Given any $d \in \ZZ$, we will denote by $s_{V,d} : V \rightarrow V[d]$ the \textbf{\textcolor{myblue}{suspension morphism}}, whose underlying map is the identity of $V$, and $s_{V,1}$ will be denoted simply by $s_{V}$.
To simplify notation, we write $sv$ instead of $s_V(v)$ for a homogeneous $v\in V$.
All morphisms between vector spaces will be $\kk$-linear (satisfying further requirements if the spaces are further decorated). 
All unadorned tensor products $\otimes$ would be over $\kk$. 
We also remark that $\mathbb{N}$ will denote the set of positive integers, whereas $\mathbb{N}_{0}$ will be the set of 
nonnegative integers. 
Given two integers $a \leq b$, we denote by $\llbracket a , b \rrbracket$ the interval $\{ n \in \ZZ : a \leq n \leq b \}$.

Given $n \in \NN$, we will denote by $\mathbb{S}_{n}$ the group of permutations of $n$ elements $\{ 1, \dots, n\}$, 
and given any $\sigma \in \mathbb{S}_{n}$, $\operatorname{sgn}(\sigma) \in \{ \pm 1\}$ will denote its sign. 
Given two graded vector spaces $V$ and $W$, we denote by $\tau_{V,W} : V \otimes W \rightarrow W \otimes V$ 
the morphism of degree zero determined by $v \otimes w \mapsto (-1)^{|v| |w|} w \otimes v$, for all homogeneous elements $v \in V$ and $w \in W$. 
More generally, for any permutation $\sigma \in \mathbb{S}_{n}$, we define the homogeneous morphism 
$\tau_{V,n}(\sigma) : V^{\otimes n} \rightarrow V^{\otimes n}$ of degree zero sending $\bar{v} = v_{1} \otimes \dots \otimes v_{n}$ to 
\begin{equation}
\label{eq:permeps1}
      (-1)^{\epsilon(\sigma,\bar{v})} v_{\sigma^{-1}(1)} \otimes \dots \otimes v_{\sigma^{-1}(n)},     
\end{equation}   
where 
\begin{equation}
\label{eq:permeps2}
\epsilon(\sigma,\bar{v}) = \underset{\text{\begin{tiny} $\begin{matrix}i<j,\\ \sigma^{-1}(i) > \sigma^{-1}(j)\end{matrix}$ \end{tiny}}}{\sum} |v_{\sigma^{-1}(i)}| |v_{\sigma^{-1}(j)}|.
\end{equation}          
To simplify notation, we will usually write $\sigma$ instead of $\tau_{V,n}(\sigma)$. 

For later use, we recall that, given homogeneous elements $v_{1}, \dots, v_{n} \in V$ of a graded vector space $V$, as well as 
$f_{1}, \dots, f_{n} \in V^{\#}$ homogeneous, then 
\begin{equation}
\label{eq:unipermvecfun}
   (f_{1} \otimes \dots \otimes f_{n})\big(\sigma (v_{1} \otimes \dots\otimes v_{n})\big) = \big(\sigma^{-1}(f_{1} \otimes \dots \otimes f_{n})\big)(v_{1} \otimes\dots\otimes v_{n}).
\end{equation}

For later use and to avoid ambiguities, we recall that, given any $x \in \RR$ and $d \in \ZZ$, the corresponding \textbf{\textcolor{myblue}{binomial coefficient}} is
\begin{equation}
\label{eq:bin}
C(x,d) = \begin{pmatrix} x \\ d \end{pmatrix} = \frac{\prod\limits_{i=0}^{d-1} (x - i) }{d!} \text{ if $d \geq 0$, and } C(x,d)=\begin{pmatrix} x \\ d \end{pmatrix} = 0 \text{ if $d < 0$. }
\end{equation}
As usual, if $d = 0$, the corresponding binomial coefficient is by definition $1$, since the product in the numerator is $1$. 
Recall the following direct identities
\begin{equation}
\label{eq:binid}
d \begin{pmatrix} x \\ d \end{pmatrix} = (x - d +1) \begin{pmatrix} x \\ d-1 \end{pmatrix} \text{ and } (x' - d') \begin{pmatrix} x' \\ d' \end{pmatrix} = x' \begin{pmatrix} x' - 1  \\ d' \end{pmatrix},  
\end{equation}
for all $x, x' \in \RR$ and all $d, d' \in \ZZ$. 
We also have the well-known \textbf{\textcolor{myblue}{Pascal identity}} 
\begin{equation}
\label{eq:binpas}
\begin{pmatrix} x \\ d \end{pmatrix} = \begin{pmatrix} x - 1 \\ d - 1 \end{pmatrix} + \begin{pmatrix} x -1 \\ d \end{pmatrix}, 
\end{equation}
for all $x \in \RR$ and $d \in \ZZ$. 
Furthermore, a telescopic argument on \eqref{eq:binpas} gives 
\begin{equation}
\label{eq:binpasgen}
\sum_{d=e'}^{e} (-1)^{d} \begin{pmatrix} x \\ d \end{pmatrix} = (-1)^{e'} \begin{pmatrix} x - 1 \\ e'-1 \end{pmatrix} + (-1)^{e} \begin{pmatrix} x -1 \\ e \end{pmatrix}, 
\end{equation}
for all $x \in \RR$ and all $e', e \in \ZZ$ such that $e' \leq e$. 
In this article, we will only consider the case where the numerators of binomial coefficients are integers. 
In that case, it is easy to see that the left member of \eqref{eq:binpasgen} coincides with the same sum where the upper limit is $\operatorname{min}(x,e)$ and the lower limit is $\operatorname{max}(0,e')$, provided $x \geq 0$. 
Moreover, we also recall the easy identity 
\begin{equation}
\label{eq:binids}
\begin{pmatrix} x \\ d \end{pmatrix} = \begin{pmatrix} x \\ x - d \end{pmatrix},  
\end{equation}
for all $x, d \in \ZZ$ with $x \geq 0$. 

\section{Double quasi-Poisson algebras}
\label{section:quasi-Poisson}
\subsection{Quasi-Poisson manifolds}

For the reader's convenience, in this subsection we will review the notion of a quasi-Poisson manifold, following \cite{AKSM}. 
The version for affine schemes is analogous. 
Let $G$ be a compact Lie group with Lie algebra $\mathfrak{g}$. 
We assume that $\mathfrak{g}$ is endowed with a $G$-invariant positive definite symmetric bilinear form $\langle \hskip 0.6mm , \rangle$, which we shall use to identify $\mathfrak{g}^*$ with $\mathfrak{g}$. 
For any $G$-manifold $M$ and any $\xi\in\mathfrak{g}$, the generating vector field of the induced infinitesimal action is defined by 
\[     \xi_M(m) =\frac{d}{d t}\Big(\exp(-t\xi).m\Big)\Big|_{t=0},     \] 
for all $m \in M$. 
It is known that the Lie algebra homomorphism $\mathfrak{g} \rightarrow \Gamma(M,TM)$ given by $\xi \mapsto \xi_M$, extends to an equivariant map, $\Lambda^{\bullet}\mathfrak{g} \rightarrow \Gamma(M,\Lambda^{\bullet}TM)$, preserving the (wedge) product and the Schouten-Nijenhuis bracket. 
In fact, for any $\alpha \in \Lambda^{\bullet}\mathfrak{g}$, 
we denote by $\alpha_M$ the corresponding multivector field $\alpha_M \in \Gamma(M,\Lambda^{\bullet}TM)$.

If $\{e_i\}_{i \in I}$ is an orthonormal basis of $\mathfrak{g}$ with respect to $\langle \hskip 0.6mm , \rangle$, the \textbf{\textcolor{myblue}{Cartan $3$-tensor}} $\phi\in\Lambda^3 \mathfrak{g}$ is given by
\[
 \phi= \frac{1}{12} \sum_{i,j,k \in I} \big\langle e_i, [e_j,e_k] \big\rangle \; e_i\wedge e_j\wedge e_k.
\]
Note that for any $G$-manifold $M$ the Cartan $3$-tensor $\phi$ corresponding to an invariant inner product on $\mathfrak{g}$ gives rise to an invariant trivector field $\phi_M$ on $M$.

Following \cite{AKSM}, a \textbf{\textcolor{myblue}{quasi-Poisson manifold}} is a $G$-manifold $M$ equipped with a $G$-equivariant skew-symmetric bilinear map $\{\hskip 0.6mm , \} : C^{\infty}(M) \times C^{\infty}(M) \rightarrow C^{\infty}(M)$ 
satisfying the Leibniz identity and such that 
\[
 \{\{f_1,f_2\},f_3\}+ \{\{f_2,f_3\},f_1\}+ \{\{f_3,f_1\},f_2\}=\phi_M(df_1,df_2,df_3),
\]
for all $f_{1}, f_{2}, f_{3} \in C^{\infty}(M)$. 
Equivalently, the Jacobi identity holds up to the invariant trivector field $\phi_M$, coming from the Cartan $3$-tensor. 
More succinctly, a quasi-Poisson manifold is a $G$-manifold $M$ equipped with a $G$-invariant bivector field 
$P \in \Gamma(M,\Lambda^2 TM)$ satisfying that $[P,P]=\phi_M$, where $[\hskip 0.6mm,]$ denotes the Schouten-Nijenhuis bracket.

\subsection{Multibrackets and double Poisson algebras}

\subsubsection{Double derivations}

One of the main features in noncommutative algebraic geometry, as developed in \cite{CBEG}, \cite{VdB} and \cite{q-VdB}, is that the role of vector fields is played by double derivations. 

Let $A$ be a nonunitary associative $\kk$-algebra. 
The $A$-bimodule of \textbf{\textcolor{myblue}{double derivations}} is given by $\D A = \Der(A,(A\otimes A)_{\out})$, 
where $(A\otimes A)_{\out}$ denotes the \textbf{\textcolor{myblue}{outer}} bimodule structure on $A\otimes A$ (\textit{i.e.} $a_1(a' \otimes a'')a_2 = a_1 a'\otimes a'' a_2$, for all $a_1,a_2,a',a'' \in A$). 
The surviving inner bimodule structure on $A\otimes A$ makes $\D A$ into an $A$-bimodule by means of 
$(a\Theta b)(c)=\Theta^{\prime}(c)b\otimes a\Theta^{\prime\prime}(c)$, for all $a,b,c\in A$, $\Theta\in\D A$, where we are using the usual Sweedler notation $\Theta(c) = \Theta'(c) \otimes \Theta''(c)$. 
If $A$ has a unit $1_{A}$, the bimodule of double derivations is endowed with a distinguished element $\E \in\D A$ (see \cite{VdB}, \S 3.3, and \cite{q-VdB}, \S 2.3), defined as
\begin{equation}
 \label{double-derivation-E}
\begin{split}
 \E : A &\longrightarrow A\otimes A,
 \\
 a &\longmapsto a\otimes 1_{A}-1_{A}\otimes a.
 \end{split}
\end{equation}
Note that this double derivation also appeared in \cite{CBEG}, \S 3.1, where it was denoted by $\Delta$.
Finally, we define the \textbf{\textcolor{myblue}{algebra of polyvector fields}} $DA = \oplus_{n\in \NN_{0}} D_{n}A$ on $A$ to be the tensor algebra $T_A(\D A)$, which is graded by the tensor power. 

\subsubsection{\texorpdfstring{Multibrackets}{Multibrackets}}

\begin{definition}[\cite{VdB}, Def. 2.2.1]
\label{def:n-bracket}
Given $n \in \NN$, an \textbf{\textcolor{myblue}{$n$-bracket}} on a nonunitary associative $\kk$-algebra $A$ is a linear map $\lr{ \dots } : A^{\otimes n}\to A^{\otimes n}$ which is a derivation from $A$ to $A^{\otimes n}$ in its last argument for the outer bimodule structure on $A^{\otimes n}$, \textit{i.e.}
\[
   \lr{a_1,a_2,\dots,a_{n-1},a_n a'_n}=a_n\lr{a_1,a_2,\dots,a_{n-1}, a'_n}+\lr{a_1,a_2,\dots,a_{n-1},a_n} a'_n,    
   \]
and is cyclically skew-symmetric, \textit{i.e.}
\[
\sigma \circ\lr{ \dots }\circ  \sigma^{-1}=(-1)^{n+1}\lr{ \dots },
\]
for $\sigma \in \mathbb{S}_{n}$ the unique cyclic permutation sending $1$ to $2$. 
It is easy to see that a $1$-bracket is simply a derivation on $A$. 
\end{definition}

Since we will mainly deal with $n$-brackets for $n=2$, which are called \textbf{\textcolor{myblue}{double brackets}}, let us present their definition more explicitly. 
A double bracket on an associative $\kk$-algebra $A$ is a $\kk$-linear map $\lr{\hskip 0.6mm, } : A^{\otimes 2} \rightarrow A^{\otimes 2}$  satisfying
 \begin{enumerate}[label={(DB.\arabic*)}]
\setcounter{enumi}{0} 
\item\label{item:double1} 
$\lr{a,b}=-\tau_{A,A}\lr{b,a}$,
\item\label{item:double2} 
$\lr{a,bc}=b\lr{a,c}+\lr{a,b}c$,
\end{enumerate}
for all $a,b,c\in A$.
We recall that $A$ is concentrated in degree zero, so $\tau_{A,A}$ is the usual flip. 
The identity \ref{item:double2} is called the \textbf{\textcolor{myblue}{Leibniz identity}} and it can be reformulated as saying that a double bracket is a double derivation in its second argument. 
A $3$-bracket will be usually called a \textbf{\textcolor{myblue}{triple}} bracket.  

Let $A$ be a nonunitary associative $\kk$-algebra. 
In \cite{VdB}, Prop. 4.1.1, it is showed that there exists a well-defined linear map
\begin{equation}                                                                                                                                                                                                                    
\label{map-mu}
   \mu : D_{n}A \longrightarrow \big\{\kk\text{-linear }n\text{-brackets on }A \big\},   
\end{equation}
for all $n \in \NN$, sending $Q=\delta_1 \otimes_{A} \dots \otimes_{A} \delta_n \in D_n A$, where $\delta_1, \dots, \delta_n \in \D A$, to 
\begin{equation}
\label{formula-mu}
 \lr{\dots }_Q=\sum^{n-1}_{i=0}(-1)^{(n-1)i}\,\sigma^{i} \circ\lr{ \dots }^{\sim}_Q\circ \sigma^{-i},
\end{equation}
where $\lr{a_1,\dots,a_n}^{\sim}_{Q}=\delta^{\prime}_n(a_n)\delta^{\prime\prime}_1(a_1)\otimes\delta^{\prime}_1(a_1)\delta^{\prime\prime}_2(a_2)\otimes \dots \otimes\delta^{\prime}_{n-1}(a_{n-1})\delta^{\prime\prime}_n(a_n)$, and $\sigma \in \mathbb{S}_{n}$ is the unique cyclic permutation sending $1$ to $2$.

\subsubsection{Double Poisson algebras}

Following \cite{VdB}, \S 2.3, if $\lr{ \hskip 0.6mm, } : A^{\otimes 2} \rightarrow A^{\otimes 2}$ is a double bracket on a nonunitary associative algebra $A$ and given elements $a, b_1, \dots, b_n \in A$, we define 
\begin{equation}
\label{eq:extension-left-double-bracket}
\lr{a,b_1\otimes b_2\otimes \dots\otimes b_n}_L=\lr{a,b_1}\otimes b_2\otimes \cdots \otimes b_n\in A^{\otimes( n+1)}.
\end{equation}
From a double bracket $\lr{ \hskip 0.6mm, }  : A^{\otimes 2} \rightarrow A^{\otimes 2}$ one defines the \textbf{\textcolor{myblue}{associated  triple bracket}} $\lr{ \hskip 0.6mm, \hskip 0.6mm, } : A^{\otimes 3} \rightarrow A^{\otimes 3}$ given by 
\begin{equation}
 \lr{c,b,a}=\lr{c,\lr{b,a}}_L + \sigma \lr{b,\lr{a,c}}_L + \sigma^{2} \lr{a,\lr{c,b}}_L,
\label{triple-bracket}
\end{equation}
for all $a, b, c \in A$, where $\sigma \in \mathbb{S}_{3}$ is the unique cyclic permutation sending $1$ to $2$. 
It is not hard to prove that it is indeed a triple bracket (see \cite{VdB}, Prop. 2.3.1). 

\begin{definition}[\cite{VdB}, Def. 2.3.2]
\label{definition:dP}
A double bracket $\lr{ \hskip 0.6mm, }$ on a nonunitary associative $\kk$-algebra $A$ is a \textbf{\textcolor{myblue}{double Poisson bracket}} if the associated triple bracket $\lr{ \hskip 0.6mm, \hskip 0.6mm,}$ vanishes. 
In this case, $(A, \lr{ \hskip 0.6mm, })$ is called a \textbf{\textcolor{myblue}{double Poisson algebra}}.
\label{def-double-Poisson}
\end{definition}

The identity $\lr{ \hskip 0.6mm, \hskip 0.6mm,}=0$ is called the \textbf{\textcolor{myblue}{double Jacobi identity}}. 
So, the triple bracket defined in \eqref{triple-bracket} can be regarded as a ``noncommutative Jacobiator'' that measures the failure to satisfy the double Jacobi identity.

\subsubsection{Double quasi-{P}oisson algebras}

In the definition of quasi-Poisson manifolds, the basic idea was that the failure of the Jacobi identity is controlled by the canonical trivector $\phi_M$ coming from the Cartan $3$-tensor. 
To adapt this idea to our noncommutative setting based on double derivations, Van den Bergh proposed to require that the ``noncommutative Jacobiator'' $\lr{ \hskip 0.6mm, \hskip 0.6mm,}$ defined in \eqref{triple-bracket} should be equal, up to constant, to the triple bracket $\lr{ \hskip 0.6mm, \hskip 0.6mm,}_{\E^3} = \mu(\E^3)$, where $\E$ is the distinguished double derivation defined in \eqref{double-derivation-E}, $\E^3 = E \otimes_{A} E \otimes_{A} E \in D_3 A$, and $\mu$ is the map introduced in \eqref{map-mu}. 

\begin{definition}[\textit{cf.} \cite{VdB}, Def. 5.1.1]
\label{double-quasi-poisson-def}
A \textbf{\textcolor{myblue}{double quasi-Poisson bracket}} with parameter $\para \in \kk$ 
on an associative $\kk$-algebra $A$ with unit $1_{A}$ is a $\kk$-linear double bracket $\lr{ \hskip 0.6mm, }$ such that its associated triple bracket satisfies 
\begin{equation}
 \label{eq:double-quasi-poisson-def}
  \lr{ \hskip 0.6mm, \hskip 0.6mm,}=\frac{\para}{12} \mu(\E^{3}) = \frac{\para}{12} \lr{ \hskip 0.6mm, \hskip 0.6mm,}_{\E^3}.
\end{equation}
\end{definition}

The previous definition is not exactly the one appearing in \cite{VdB}, Def. 5.1.1. 
However, we find more convenient to use this slightly more general notion, since it allows us to work simultaneously with double Poisson and double quasi-Poisson algebras. 
Indeed, note that Definition \ref{double-quasi-poisson-def} for $\para = 0$ reduces to Definition \ref{definition:dP}, whereas the case $\para = 1$ gives the usual definition of double quasi-Poisson algebra. 
More generally, if $\para \neq 0$ and $\kk$ is quadratically closed (\textit{i.e.} all square roots of elements of $\Bbbk$ are in $\Bbbk$), pick $\hat{\para} \in \kk$ such that $\hat{\para}^{2} = \para$. 
Then, \eqref{eq:double-quasi-poisson-def} is tantamount to $(A,\hat{\para}^{-1} \lr{ \hskip 0.6mm , })$ 
being a double quasi-Poisson algebra in the sense of Van den Bergh.

Using \eqref{formula-mu} we deduce that  
\begin{equation}
\label{RHS-quasi-Poisson-double}
 \begin{split}
   \frac{1}{12}\lr{c,b,a}_{\E^3} &=\frac{1}{4}\Big(ac\otimes b\otimes 1_{A}-ac\otimes 1_{A}\otimes b-a\otimes cb\otimes 1_{A}+a\otimes c\otimes b
 \\
 &\phantom{= \frac{1}{4}}+c\otimes 1_{A}\otimes ba-c\otimes b\otimes a+1_{A}\otimes cb\otimes a-1_{A}\otimes c\otimes ba\Big),
 \end{split}
\end{equation}
for all $a,b,c\in A$. 

In \cite{VdB}, Thm. 7.12.2, the author proves that a double quasi-Poisson algebra $(A,\lr{\hskip 0.6mm,})$ induces a quasi-Poisson bracket on the representation scheme $\Rep(A,V)$ (\textit{i.e.} the affine scheme parametrizing the $\kk$-linear representations of $A$ on a finite-dimensional vector space $V$), regarded as a $\GL(V)$-scheme. 
In other words, double quasi-Poisson algebras satisfy the Kontsevich-Rosenberg principle \cite{KoRo}. 

\section{\texorpdfstring{Quasi-cyclic $A_{\infty}$-algebras and pre-Calabi-Yau structures}{Quasi-cyclic A-infinity-algebras and pre-Calabi-Yau structures}}
\label{section:cyclic-pre-CY}

In this section we provide the basic definitions of (quasi-)cyclic $A_{\infty}$-algebras and pre-Calabi-Yau structures. 
Most of this material can be found in \cites{KoSo09, IK17, IKV19} (see also \cite{FH19}). 
We also recall and extend some technical terminology on cyclic $A_{\infty}$-algebras and pre-Calabi-Yau structures from \cite{FH19}, Section 4.   

\subsection{\texorpdfstring{$A_{\infty}$-algebras}{A-infinity-algebras}}
\label{subsection:Ainfty}

The following notion was introduced by J. Stasheff in \cite{Sta63}. 
We recall that a \textbf{\textcolor{myblue}{nonunitary $A_{\infty}$-algebra}} is a (cohomologically) graded vector space 
$A=\oplus_{n \in \ZZ} A^{n}$ together with a collection of maps $\{ m_{n} \}_{n \in \NN}$, 
where $m_{n} : A^{\otimes n} \rightarrow A$ is a homogeneous morphism of degree $2-n$, satisfying the \hypertarget{eq:ainftyalgebralink}{equation}
\begin{equation}
\tag{$\operatorname{SI}(N)$}
\label{eq:ainftyalgebra}
   \sum_{(r,s,t) \in \mathcal{I}_{N}} (-1)^{r + s t} m_{r + 1 + t} \circ (\mathrm{id}_{A}^{\otimes r} \otimes m_{s} \otimes \mathrm{id}_{A}^{\otimes t}) = 0, 
\end{equation} 
for all $N \in \NN$, where $\mathcal{I}_{N} = \{ (r,s,t) \in \NN_{0} \times \NN \times \NN_{0} : r + s + t = N \}$. 
We will denote by $\operatorname{SI}(N)$ the homogeneous morphism of degree $3-N$ from $A^{\otimes N}$ to $A$ given by the left hand side of \eqref{eq:ainftyalgebra}. 
Note that a nonunitary graded associative algebra is the same as a nonunitary $A_{\infty}$-algebra $(A,m_{\bullet})$ 
satisfying that $m_{n}$ vanishes for all $n \neq 2$. 

\begin{remark}
\label{remark:restmult}
Note that, if $A = A^{0} \oplus A^{1}$, by degree reasons, any collection $\{ m_{n} \}_{n \in \NN}$ of maps such that $m_{n} : A^{\otimes n} \rightarrow A$ has degree $2-n$ for all $n \in \NN$ satisfies that 
\begin{equation}
\label{eq:restmult}
m_{n}(A^{i_{1}} \otimes \dots \otimes A^{i_{n}}) \subseteq A^{2-n+ |\bar{i}|},
\end{equation}
for all $\bar{i} = (i_{1}, \dots, i_{n}) \in \{0,1\}^{n}$ and $|\bar{i}| = \sum_{j=1}^{n} i_{j} $.
Then, $m_{n}|_{A^{i_{1}} \otimes \dots \otimes A^{i_{n}}}$ vanishes unless that $|\bar{i}| \in \{ n-2, n-1 \}$.
\end{remark}

Let $A$ be a graded vector space with a distinguished element $1_{A} \in A^{0}$. 
A mapping $m_{n} : A^{\otimes n} \rightarrow A$, for $n \in \NN$, is called \textbf{\textcolor{myblue}{normalized}} (with respect to $1_{A}$), or \textbf{\textcolor{myblue}{$1_{A}$-normalized}}, if $m_{n}(a_{1},\dots,a_{n}) = 0$ whenever there is $i \in \llbracket 1 , n \rrbracket$ such that $a_{i} = 1_{A}$. 
We also recall that an $A_{\infty}$-algebra is \textbf{\textcolor{myblue}{strictly unitary}} if there exists an element $1_{A} \in A^{0}$ such that 
$1_{A}$ is a unit for the product $m_{2}$, and $m_{n}$ is $1_{A}$-normalized for all $n \in \NN\setminus \{2\}$. 

\subsection{\texorpdfstring{Quasi-cyclic structures on $A_{\infty}$-algebras}{Quasi-cyclic structures on A-infinity-algebras}}
\label{subsection:ultra-cyclic}

Given $d \in \ZZ$, a \textbf{\textcolor{myblue}{$d$-quasi-cyclic (nonunitary) $A_{\infty}$-algebra}} is a nonunitary $A_{\infty}$-algebra $(A,m_{\bullet})$ provided with a bilinear form $\gamma : A \otimes A \rightarrow \kk$ of degree $d$ satisfying that $\gamma \circ \tau_{A,A} = \gamma$, $\gamma$ has trivial kernel, \textit{i.e.} if $\gamma(a,b) = 0$ for some $a \in A$ and all $b \in A$ then $a = 0$, and 
\begin{equation}
\label{eq:ip1}
     \gamma\big(m_{n}(a_{1},\dots,a_{n-1}, a_{n}),a_{0}\big) = (-1)^{n + |a_{0}|(\sum_{i=1}^{n} |a_{i}|) } \gamma\big(m_{n}(a_{0},a_1,\dots,a_{n-1}),a_{n}\big),  
\end{equation}
for all homogeneous $a_{0}, \dots, a_{n} \in A$. 
If we further impose that $\gamma$ be nondegenerate, then $(A,m_{\bullet})$ is called a \textbf{\textcolor{myblue}{$d$-cyclic $A_{\infty}$-algebra}}.

Let $A$ be a graded vector space provided with a bilinear form $\gamma : A \otimes A \rightarrow \kk$ of degree $d$ satisfying that $\gamma \circ \tau_{A,A} = \gamma$ and having trivial kernel.
Let $\{ m_{n} : n \in \NN \}$ be a family of maps of the form $m_{n} : A^{\otimes n} \rightarrow A$ of degree $2-n$ satisfying the identity \eqref{eq:ip1}. 
Given $N \in \NN$, we will define the homogeneous linear map $\operatorname{SI}(N)_{\gamma} : A^{\otimes (N+1)} \rightarrow \kk$ 
of degree $3-N+d$ \hypertarget{eq:ainftyalgebragammalinkop}{by}
\begin{equation*}
\label{eq:ainftyalgebragammaop}
   \operatorname{SI}(N)_{\gamma} = \sum_{(r,s,t) \in \mathcal{I}_{N}} (-1)^{r + s t} \gamma \circ \big(m_{r + 1 + t} \circ (\mathrm{id}_{A}^{\otimes r} \otimes m_{s} \otimes \mathrm{id}_{A}^{\otimes t}) \otimes \mathrm{id}_{A}\big).
\end{equation*}
Then, the Stasheff identities \eqref{eq:ainftyalgebra} become equivalent to the vanishing of $\operatorname{SI}(N)_{\gamma}$, for all $N \in \NN$, \hypertarget{eq:ainftyalgebragammalink}{\textit{i.e.}}
\begin{equation}
\tag{$\operatorname{SI}(N)_{\gamma}$}
\label{eq:ainftyalgebragamma}
   \sum_{(r,s,t) \in \mathcal{I}_{N}} (-1)^{r + s t} \gamma \circ \big(m_{r + 1 + t} \circ (\mathrm{id}_{A}^{\otimes r} \otimes m_{s} \otimes \mathrm{id}_{A}^{\otimes t}) \otimes \mathrm{id}_{A}\big) = 0,
\end{equation}
for all $N \in \NN$.

\begin{lemma}
\label{lemma:cyc}
Let $A$ be a graded vector space provided with a bilinear form $\gamma : A \otimes A \rightarrow \kk$ of degree $d$ satisfying that $\gamma \circ \tau_{A,A} = \gamma$ and having trivial kernel. 
Let $\{ m_{n} : n \in \NN \}$ be a family of maps of the form $m_{n} : A^{\otimes n} \rightarrow A$ of degree $2-n$ satisfying identity \eqref{eq:ip1}. 
Then, 
\[     \operatorname{SI}(N)_{\gamma}(a_{1},\dots,a_{N},a_{0}) = (-1)^{N + |a_{0}| (|a_{1}|+\dots+|a_{N}|)} \operatorname{SI}(N)_{\gamma}(a_{0},a_{1},\dots,a_{N}),     \]
for all $N \in \NN$ and all homogeneous $a_{0}, \dots, a_{N} \in A$. 
\end{lemma}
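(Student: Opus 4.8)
The statement asserts a cyclic-symmetry property for the "obstruction functional" $\operatorname{SI}(N)_\gamma$, promoting the single-step cyclicity \eqref{eq:ip1} of each $m_n$ to a cyclicity of the whole Stasheff-identity expression. The plan is to work term by term in the sum $\sum_{(r,s,t)\in\mathcal I_N}$ defining $\operatorname{SI}(N)_\gamma$, so the core task reduces to understanding how the elementary operator
\[
   T_{r,s,t}(a_1,\dots,a_N,a_0) = \gamma\big(m_{r+1+t}(a_1,\dots,a_r, m_s(a_{r+1},\dots,a_{r+s}), a_{r+s+1},\dots,a_N),\, a_0\big)
\]
transforms when one cyclically rotates $(a_1,\dots,a_N,a_0)\mapsto (a_0,a_1,\dots,a_N)$. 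First I would record the Koszul sign bookkeeping: moving $a_0$ past $a_1\otimes\dots\otimes a_N$ produces the global sign $(-1)^{|a_0|(|a_1|+\dots+|a_N|)}$ that already appears in the claimed identity, so it suffices to track the remaining combinatorial signs.

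The key step is to split the index set $\mathcal I_N$ according to whether $a_0$ (equivalently, the "last" slot, position $N+1$, of the outer bracket) lands inside the inner multiplication $m_s$ or outside it after the rotation, and to see how the sum reorganizes itself. When $a_0$ is an outer argument, the rotated term $T_{r,s,t}(a_0,a_1,\dots,a_N)$ is of the form $\gamma\big(m_{r'+1+t'}(\dots, m_s(\dots),\dots), a_N\big)$ with $r'=r+1$, $t'=t$ (and the last outer slot now occupied by $a_N$); applying \eqref{eq:ip1} to the outer $m_{r+1+t}$ to move $a_N$ back into last position in $\gamma$, then matching indices, one sees this contributes a term of the original sum up to a controlled sign. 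When $a_0$ is swallowed by $m_s$, i.e. $r=0$, one must instead apply \eqref{eq:ip1} to the inner map $m_s$ itself to cycle $a_0$ out of it, after which the term pairs with the $t=0$, $r=N-s$ terms on the other side. The whole argument is essentially a reindexing bijection on $\mathcal I_N$ combined with two applications of \eqref{eq:ip1}, one to the outer and one to the inner multiplication, and the claim follows once all the $(-1)^{r+st}$ and Koszul signs are checked to recombine into the asserted prefactor $(-1)^{N+|a_0|(|a_1|+\dots+|a_N|)}$.

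The main obstacle is precisely the sign bookkeeping: \eqref{eq:ip1} carries the sign $(-1)^{n+|a_0|\sum|a_i|}$, the Stasheff summand carries $(-1)^{r+st}$, and the Koszul rule contributes further signs each time an argument is transported past $m_s$ or past $a_0$; one must verify that after the reindexing these three sources collapse to the single factor in the statement, uniformly in $N$ and in the degrees. A clean way to do this, which I would adopt, is to pass through the bar/suspension picture: replacing $m_n$ by its shifted (degree $+1$) counterpart $b_n = s\circ m_n\circ (s^{-1})^{\otimes n}$ on $sA$ absorbs almost all Koszul signs, so that \eqref{eq:ip1} becomes an unsigned cyclic-invariance statement for $b_n$ against the induced form, and $\operatorname{SI}(N)_\gamma$ becomes a genuinely cyclically symmetric expression up to a single transposition sign; translating back yields the stated formula with minimal computation. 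Either way, there is no conceptual difficulty beyond a careful, finite sign computation, so I expect the proof to be short, with the only real work being the verification that the reindexing bijection $(r,s,t)\leftrightarrow(r',s',t')$ is sign-consistent.
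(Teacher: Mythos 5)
Your plan is essentially the paper's own proof: the paper likewise splits $\operatorname{SI}(N)_{\gamma}$ according to whether the inner $m_{s}$ sits at the end ($t=0$) or not, handles the $t>0$ terms by one application of \eqref{eq:ip1} to the outer multiplication with the reindexing $(r,s,t)\mapsto(r+1,s,t-1)$ (note your ``$t'=t$'' is a small slip, since $r+s+t$ must stay equal to $N$), and handles the $t=0$ terms by applying \eqref{eq:ip1} to the outer map, using the symmetry of $\gamma$, and then applying \eqref{eq:ip1} once more to $m_{s}$, exactly the ``two applications, outer and inner'' you describe. Apart from that indexing slip and the optional suspension trick for the signs, there is no substantive difference from the paper's argument.
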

\begin{proof}
Define the linear maps $\operatorname{SI}(N)^{0}_{\gamma}, {}^{0}\operatorname{SI}(N)_{\gamma}, \operatorname{SI}(N)^{+}_{\gamma}, {}^{+}\operatorname{SI}(N)_{\gamma}  : A^{\otimes (N+1)} \rightarrow \kk$ via
\begin{equation}
\label{eq:ainftyalgebragammapri}
   \operatorname{SI}(N)^{0}_{\gamma} = \sum_{(r,s,0) \in \mathcal{I}_{N}} (-1)^{r} \gamma \circ \big(m_{r + 1} \circ (\mathrm{id}_{A}^{\otimes r} \otimes m_{s}) \otimes \mathrm{id}_{A}\big)
\end{equation}
and 
\begin{equation}
\label{eq:ainftyalgebragammapripri}
   {}^{0}\operatorname{SI}(N)_{\gamma} = \sum_{(0,s,t) \in \mathcal{I}_{N}} (-1)^{s t} \gamma \circ \big(m_{1 + t} \circ (m_{s} \otimes \mathrm{id}_{A}^{\otimes t}) \otimes \mathrm{id}_{A}\big),
\end{equation}
together with $\operatorname{SI}(N)^{+}_{\gamma} = \operatorname{SI}(N)_{\gamma} - \operatorname{SI}(N)^{0}_{\gamma}$
and ${}^{+}\operatorname{SI}(N)_{\gamma} = \operatorname{SI}(N)_{\gamma} - {}^{0}\operatorname{SI}(N)_{\gamma}$. 
Using \eqref{eq:ip1}, we get 
\[     \operatorname{SI}(N)^{+}_{\gamma}(a_{1},\dots,a_{N},a_{0}) = (-1)^{N + |a_{0}|( |a_{1}|+\dots+|a_{N}|)} \hskip 2mm {}^{+}\operatorname{SI}(N)_{\gamma}(a_{0},\dots,a_{N}).     \]
Indeed, 
\begin{small}
	\begin{align*}     
	&\operatorname{SI}(N)^{+}_{\gamma}(a_{1},\dots,a_{N},a_{0}) 
	\\
	&= \hskip -4mm \sum_{\text{\begin{tiny}$\begin{matrix}(r,s,t) \in \mathcal{I}_{N} \\ t > 0 \end{matrix}$\end{tiny}}} \hskip -4mm (-1)^{r+st+s (\sum\limits_{j=1}^{r} |a_{j}|)} \gamma \Big(m_{r + 1+t}\big(a_{1}, \dots, a_{r}, m_{s}(a_{r+1}, \dots, a_{r+s}),a_{r+s+1},\dots,a_{N}\big),a_{0}\Big) 
	\\
	&= \hskip -4mm \sum_{\text{\begin{tiny}$\begin{matrix}(r,s,t) \in \mathcal{I}_{N} \\ t > 0 \end{matrix}$\end{tiny}}} \hskip -4mm (-1)^{r+s t+s (\sum\limits_{j=1}^{r} |a_{j}|) + r+1+t + |a_{0}| (s+\sum\limits_{j=1}^{N} |a_{j}|)} 
	\\
	&\phantom{\hskip 1.4cm \sum_{\text{\begin{tiny}$\begin{matrix}(r,s,t) \in \mathcal{I}_{N} \\ t > 0 \end{matrix}$\end{tiny}}}} 
	\gamma \Big(m_{r + 1+t}\big(a_{0}, \dots, a_{r}, m_{s}(a_{r+1}, \dots, a_{r+s}),a_{r+s+1},\dots,a_{N-1}\big),a_{N}\Big)
    \\
	&= \hskip -4mm \sum_{\text{\begin{tiny}$\begin{matrix}(r',s,t') \in \mathcal{I}_{N} \\ r' > 0 \end{matrix}$\end{tiny}}} \hskip -4mm (-1)^{r'+st'+s (\sum\limits_{j=0}^{r'-1} |a_{j}|) + N + |a_{0}| (\sum\limits_{j=1}^{N} |a_{j}|)} 
	\\
	&\phantom{\hskip 1.4cm \sum_{\text{\begin{tiny}$\begin{matrix}(r,s,t) \in \mathcal{I}_{N} \\ t > 0 \end{matrix}$\end{tiny}}}}
	\gamma \big(m_{r' + 1+t'}\Big(a_{0}, \dots, a_{r'-1}, m_{s}(a_{r'}, \dots, a_{r'+s-1}),a_{r'+s},\dots,a_{N-1}\big),a_{N}\Big)
	\\
	&= (-1)^{N + |a_{0}|( |a_{1}|+\dots+|a_{N}|)} \hskip 2mm {}^{+}\operatorname{SI}(N)_{\gamma}(a_{0},\dots,a_{N}),
	\end{align*}
\end{small}
\hskip -0.8mm where $r' = r+1$ and $t'= t-1$. 
On the other hand, 
\begin{small}
\begin{align*}     
     &\operatorname{SI}(N)^{0}_{\gamma}(a_{1},\dots,a_{N},a_{0}) 
     = \hskip -4mm \sum_{(r,s,0) \in \mathcal{I}_{N}} \hskip -4mm (-1)^{r+s (\sum\limits_{j=1}^{r} |a_{j}|)} \gamma \Big(m_{r + 1}\big(a_{1}, \dots, a_{r}, m_{s}(a_{r+1}, \dots, a_{N})\big),a_{0}\Big) 
     \\
     &= \hskip -4mm \sum_{(r,s,0) \in \mathcal{I}_{N}} \hskip -4mm (-1)^{r+s (\sum\limits_{j=1}^{r} |a_{j}|) + r+1+ |a_{0}| (s+\sum\limits_{j=1}^{N} |a_{j}|)} 
     \gamma \big(m_{r + 1}(a_{0}, \dots, a_{r}), m_{s}(a_{r+1}, \dots,a_{N})\big)
     \\
     &= - \hskip -4mm \sum_{(r,s,0) \in \mathcal{I}_{N}} \hskip -4mm (-1)^{|a_{0}| (\sum\limits_{j=1}^{N} |a_{j}|) + s (\sum\limits_{j=0}^{r} |a_{j}|)+
     	(s +\sum\limits_{j=r+1}^{N} |a_{j}|)(r+1+\sum\limits_{j=0}^{r} |a_{j}|)}
     \\
     &\phantom{= - \hskip -4mm \sum_{(r,s,0) \in \mathcal{I}_{N}} \hskip -4mm (-1)^{|a_{0}| (\sum\limits_{j=1}^{N} |a_{j}|) + s (\sum\limits_{j=0}^{r} |a_{j}|)}}
     \gamma \big(m_{s}(a_{r+1}, \dots,a_{N}), m_{r+1}(a_{0}, \dots, a_{r})\big)
     \\
     &= (-1)^{N +|a_{0}| (\sum\limits_{j=1}^{N} |a_{j}|)} \hskip -4mm \sum_{(r,s,0) \in \mathcal{I}_{N}} \hskip -4mm (-1)^{r s + N-1}  \gamma \Big(m_{s}\big(
     m_{r + 1}(a_{0}, \dots, a_{r}), a_{r+1}, \dots, a_{N-1}\big), a_{N} \Big)
     \\
     &= (-1)^{N +|a_{0}| (\sum\limits_{j=1}^{N} |a_{j}|)} \hskip -4mm \sum_{(0,s',t') \in \mathcal{I}_{N}} \hskip -4mm (-1)^{s' t'}  \gamma \Big(m_{t'+1}\big(
     m_{s'}(a_{0}, \dots, a_{s'-1}), a_{s'}, \dots, a_{N-1}\big), a_{N} \Big)
     \\
     &= (-1)^{N +|a_{0}| (\sum\limits_{j=1}^{N} |a_{j}|)} \hskip 2mm {}^{0}\operatorname{SI}(N)_{\gamma}(a_{0},\dots,a_{N}),
\end{align*}
\end{small}
\hskip -0.8mm where $s' = r+1$ and $t'= s-1$. 
Combining the previous equations we obtain the desired result. 
\end{proof}

The following result is an immediate consequence of property \eqref{eq:ip1}.
\begin{fact}
\label{fact:cyc2}
Let $A$ be a graded vector space with a distinguished element $1_{A}$, provided with a bilinear form $\gamma : A \otimes A \rightarrow \kk$ of degree $d$ satisfying that $\gamma \circ \tau_{A,A} = \gamma$ and having trivial kernel. 
Let $n \in \NN$ and $m_{n} : A^{\otimes n} \rightarrow A$ be a $1_{A}$-normalized map of degree $2-n$ satisfying identity \eqref{eq:ip1}. 
Then, $\gamma(m_{n}(a_{1},\dots,a_{n}),1_{A}) = 0$, for all $a_{1}, \dots, a_{n} \in A$.
\end{fact}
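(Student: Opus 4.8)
The plan is to deduce Fact~\ref{fact:cyc2} directly from the quasi-cyclicity identity \eqref{eq:ip1} by choosing the extra input argument to be the unit $1_{A}$ and using that $m_{n}$ is $1_{A}$-normalized. Concretely, I would specialize \eqref{eq:ip1} to the case $a_{0} = 1_{A}$, which has degree $0$, so that the sign $(-1)^{n + |a_{0}|(\sum_{i=1}^{n}|a_{i}|)}$ collapses to $(-1)^{n}$ and the identity reads
\[
\gamma\big(m_{n}(a_{1},\dots,a_{n-1},a_{n}),1_{A}\big) = (-1)^{n}\,\gamma\big(m_{n}(1_{A},a_{1},\dots,a_{n-1}),a_{n}\big),
\]
for all homogeneous $a_{1},\dots,a_{n}\in A$.

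The right-hand side vanishes immediately: since $m_{n}$ is $1_{A}$-normalized and its first argument is $1_{A}$, we have $m_{n}(1_{A},a_{1},\dots,a_{n-1}) = 0$, hence $\gamma(m_{n}(1_{A},a_{1},\dots,a_{n-1}),a_{n}) = 0$ by bilinearity of $\gamma$. Therefore $\gamma(m_{n}(a_{1},\dots,a_{n}),1_{A}) = 0$ for all homogeneous $a_{1},\dots,a_{n}$, and the general (not necessarily homogeneous) case follows by bilinearity and the fact that both sides are additive in each $a_{i}$. This is essentially the whole argument.

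There is really no serious obstacle here; the only point requiring a modicum of care is the bookkeeping of the sign in \eqref{eq:ip1} when $a_{0} = 1_{A}$, and in particular noting that $1_{A} \in A^{0}$ so $|1_{A}| = 0$ makes the $|a_{0}|$-dependent part of the exponent disappear. One should also remark that the statement does not even use the Stasheff identities: it is a purely formal consequence of \eqref{eq:ip1} together with $1_{A}$-normalization of $m_{n}$, which is exactly why it is recorded as a "Fact" rather than a lemma. I would present the proof in two short sentences along the lines above, perhaps inlining it right after the statement rather than in a separate \texttt{proof} environment, since it is a one-line deduction.
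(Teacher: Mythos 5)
Your proof is correct and is exactly the argument the paper has in mind: the paper states Fact \ref{fact:cyc2} as an immediate consequence of \eqref{eq:ip1}, namely specializing the cyclicity identity to $a_{0}=1_{A}$ and using that $m_{n}$ is $1_{A}$-normalized, which is precisely what you do.
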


Note that $(A,m_{\bullet})$ is an $A_{\infty}$-algebra if and only if $\operatorname{SI}(N)_{\gamma}$ vanishes for all $N \in \NN$, 
and, if $(A,m_{\bullet})$ is also provided with a bilinear form $\gamma : A \otimes A \rightarrow \kk$ of degree $d$ satisfying that $\gamma \circ \tau_{A,A} = \gamma$ and having trivial kernel, it is $d$-quasi-cyclic if and only if 
\eqref{eq:ip1} holds for all $n \in \NN$. 

\subsection{Natural bilinear forms and pre-Calabi-Yau structures}
\label{subsection:nat-bilinear}

We recall the definition of the \textbf{\textcolor{myblue}{natural bilinear form}} (of degree $-1$) associated with any vector space $A$, which is considered as a (cohomologically) graded vector space concentrated in degree zero. 
First, we set $\partial A = A \oplus A^{\#}[-1]$. 
For clarity, we will denote the suspension map $s_{A^{\#},-1} : A^{\#} \rightarrow A^{\#}[-1]$ simply by $t$, 
and any element of $A^{\#}[-1]$ will be thus denoted by $tf$, for $f \in A^{\#}$.
Define now the bilinear form 
\[
 \gan_{A} : \partial A \otimes \partial A \longrightarrow \kk
\]
 by 
\begin{equation}
\label{eq:natform} 
     \gan_{A}(tf,a) = \gan_{A}(a,tf) = f(a)
     \text{ $\phantom{x}$ and $\phantom{x}$ }
     \gan_{A}(a,b)  = \gan_{A}(tf,tg) = 0,
\end{equation}
for all $a,b \in A$ and $f, g \in A^{\#}$. 
Note that $\gan_{A}$ has degree $-1$, $\gan_{A} \circ \tau_{\partial A, \partial A} = \gan_{A}$ and its kernel is trivial.
If there is no risk of confusion, we shall denote $\gan_{A}$ simply by $\gan$. 

We now present the crucial notion of pre-Calabi-Yau algebra, introduced in \cite{KV18} (see also \cites{IK17, IKV19}, Def. 2.5). 
The definition below is an equivalent version for the case of an algebra $A$ concentrated in degree zero and it follows from unraveling 
the generalized necklace bracket.
\begin{definition}
\label{definition-pre-d-CY}
A \textbf{\textcolor{myblue}{pre-Calabi-Yau (algebra) structure}} on a vector space $A$, considered as a graded vector space concentrated in degree zero, is the datum of a $(-1)$-quasi-cyclic $A_{\infty}$-algebra structure on the graded vector space 
$\partial A = A \oplus A^{\#}[-1]$ for the natural bilinear form $\gan_{A} : \partial A \otimes \partial A \rightarrow \kk$ of degree $-1$ defined in \eqref{eq:natform} such that the corresponding multiplications $\{ m_{n} \}_{n \in \NN}$ of $\partial A$ 
satisfy that 
\begin{enumerate}[label=(pCY.\arabic*)]
	\item\label{item:pCY1} $m_{n}(A^{\otimes n}) \subseteq A$, for all $n \in \NN$; 
    \item\label{item:pCY2} given $n \geq 3$, $\ell \in \llbracket 0 , n-2 \rrbracket$ and $\bar{j} = (j_{1},\dots,j_{n-1}) \in \NN_{0}^{n-1}$ with $|\bar{j}| = \sum_{k=1}^{n-1} j_{k} = 2$ and $j_{k} = 0$ if $k \notin \{ 1, \ell+1 \}$, there exists a linear map $b_{\bar{j}} : A[1]^{\otimes 2} \rightarrow A^{\otimes (n-1)}$ such that 
    \begin{equation}
    \label{eq:CY}
    \begin{split}
    (f_{1} \otimes \dots \otimes f_{n-1})\big(&b_{\bar{j}}(sa \otimes sb)\big) 
    \\
    &= 
    \gan_{A}\big(m_{n}(a,tf_{1},\dots,tf_{\ell},b,tf_{\ell+1},\dots,tf_{n-2}),tf_{n-1}\big), 
    \end{split}
    \end{equation}
    for all $a, b \in A$ and $f_{1}, \dots, f_{n-1} \in A^{\#}$. 
    \end{enumerate}
\end{definition}

\begin{remark}
\label{remark:findim}
The previous definition implies in particular that $m_{1}$ and $m_{n}|_{A^{\otimes n}}$ for $n \neq 2$ vanish, and $(A,m_{2}|_{A^{\otimes 2}})$ is an associative algebra such that its canonical inclusion into $\partial A$ is a strict morphism of $A_{\infty}$-algebras. 
Furthermore, by the cyclicity property \eqref{eq:ip1}, $m_{2}|_{A \otimes A^{\#}[-1]}$ and $m_{2}|_{A^{\#}[-1] \otimes A}$ are the usual left and right actions of $A$ on $A^{\#}[-1]$, respectively (see the first paragraph of Subsection \ref{subsection:definition}).

We regard condition \ref{item:pCY2} as a finiteness assumption. 
Indeed, as explained in \cite{KV18}, one can construct (using \eqref{eq:CY} and adding the proper signs) an injective linear map $\mathscr{R}$ from the higher Hochschild complex of the underlying vector space of $A$, defined in \cite{KV18} (see also \cites{IK17, IKV19,IK19}), to the usual Hochschild complex of the underlying graded vector space of $\partial A$. 
Condition \ref{item:pCY2} only means that $m_{n}$ is in the image of such a map. 
The vanishing of the generalized necklace bracket of an element $b_{\bullet}$ in the higher Hochschild complex is tantamount to the Stasheff identities for 
$\mathscr{R}(b_{\bullet})$.
Notice that \ref{item:pCY2} is immediately satisfied if $A$ is finite dimensional, so this condition is redundant in that case. 
This also follows from the fact that the map $\mathscr{R}$ is an isomorphism if $A$ is finite dimensional. 
Moreover, since the natural bilinear form $\gan_{A}$ is nondegenerate if $A$ is finite dimensional, a pre-Calabi-Yau structure on a finite dimensional vector space $A$ gives an honest cyclic $A_{\infty}$-algebra structure on $\partial A$.
\end{remark}

\subsection{\texorpdfstring{Good $A_{\infty}$-algebras and pre-Calabi-Yau algebras}{Good A-infinity-algebras and pre-Calabi-Yau algebras}}
\label{subsection:good-algebras}

We will now recall the following terminology that will be useful in the sequel (see \cite{FH19}, Subsection 4.4). 
Let us first fix some notation. 
Assume that there is a decomposition $B_{0} \oplus B_{1}$ of a graded vector space $B$. 
In the following section, $B_{0}$ will be a graded vector space $A$ and $B_{1}$ will be $A^{\#}[-1]$. 
We will write $B_{\bar{i}} = B_{i_{1}} \otimes \dots \otimes B_{i_{n}}$, for all $\bar{i} = (i_{1},\dots,i_{n}) \in \{ 0 , 1 \}^{n}$, 
and we recall that $| \bar{i} | = \sum_{j = 1}^{n} i_{j}$. 
Then, for any integer $n \in \NN$, the decomposition $B = B_{0} \oplus B_{1}$ induces a canonical decomposition
\[     B^{\otimes n} = T_{n,g} \oplus T_{n,b},     \]
where 
\begin{equation}
   T_{n,g} = \bigoplus_{\bar{i} \in \mathscr{I}_{n}} B_{\bar{i}}, \hskip 5mm 
 T_{n,b} = \bigoplus_{\bar{i} \in \{0,1\}^{n} \setminus \mathscr{I}_{n}} B_{\bar{i}},  
\label{etiqueta-T-n-g}
\end{equation}  
and
\[     \mathscr{I}_{n} = \big\{ \bar{i} = (i_{1},\dots,i_{n})\in \{0,1\}^{n} : \text{ $i_{j} \neq i_{j+1}$ for all $j \in \{ 1, \dots, n - 1 \}$ } \big\}.     \]
Note that $T_{1,b} = 0$. 

Given $n \in \NN$, we say that a map $m_{n} : B^{\otimes n} \rightarrow B$ is \textbf{\textcolor{myblue}{acceptable}} if $m_{n}|_{B_{\bar{i}}}$ vanishes in the following two cases: there is $j \in \llbracket 1, n-1\rrbracket$ such that $i_{j} = i_{j+1} = 0$, or we have that $|\bar{i} | = n - 1$ and $i_{1} . i_{n} = 0$. 
This condition is void if $n=1$. 
Given $n \in \NN$ odd, a map $m_{n} : B^{\otimes n} \rightarrow B$ will be called \textbf{\textcolor{myblue}{good}} if $m_{n}|_{T_{n,b}}$ vanishes and $m_{n}(B_{i_{1}} \otimes \dots \otimes B_{i_{n}}) \subseteq B_{i_{1}}$, 
for all $(i_{1},\dots,i_{n}) \in \mathscr{I}_{n}$. 
Note that any good map $m_{n} : B^{\otimes n} \rightarrow B$ is acceptable. 
Moreover, if we consider the new grading on $B$ where $B_{0}$ is concentrated in degree zero and $B_{1}$ is concentrated in degree $1$, a map $m_{3} : B^{\otimes 3} \rightarrow B$ of degree $-1$ is good if and only if it is acceptable. 

\begin{definition}
\label{definition:terminology2}
Let $B$ be an $A_{\infty}$-algebra with a decomposition $B = B_{0} \oplus B_{1}$ of the underlying graded vector space. 
We say that $B$ is \textbf{\textcolor{myblue}{acceptable}} if for every integer $n \in \NN$ the multiplication map $m_{n}$ is acceptable. 
We say that $B$ is \textbf{\textcolor{myblue}{good}} if we further have that for every odd integer $n \in \NN$ the multiplication map $m_{n}$ is good.
\end{definition}

Note that all these definitions apply in particular to pre-Calabi-Yau structures on $A$, where we take $B_{0} = A$ and $B_{1} = A^{\#}[-1]$. 

\section{Main result: double quasi-Poisson algebras are pre-Calabi-Yau}
\label{section:core}

\subsection{\texorpdfstring{The definition of the $A_{\infty}$-algebra structure}{The definition of the A-infinity-algebra structure}}
\label{subsection:definition}

Let $A$ be an associative $\kk$-algebra with product $\mu_{A}$ and unit $1_{A}$. 
Assume that $A$ is provided with a double bracket $\lr{ \hskip 0.6mm , } : A^{\otimes 2} \rightarrow A^{\otimes 2}$. 
Note that $A$ is considered to be concentrated in degree zero.
The shifted dual $A^{\#}[-1]$ is a bimodule over $A$ via $ \gan(a \cdot t f \cdot b, c) = f(b c a)$, for all $a,b,c \in A$ and $f \in A^{\#}$, where we denote $s_{A^{\#},-1}$ simply by $t$.  
Then, $\partial A = A \oplus A^{\#}[-1]$ is a graded (associative) unitary $\kk$-algebra with 
product $m_{2}$, also denoted by a dot and given by $(a,tf) \cdot (a',tf') = (a a' , tf \cdot a' + a \cdot t f')$, for $a,a' \in A$ and $f, f' \in A^{\#}$, and with unit $(1_{A},0)$. 
Moreover, the bracket $\lr{ \hskip 0.6mm,}$ on $A$, induces the unique good map 
$m_{3} : \partial A^{\otimes 3} \rightarrow \partial A$ of cohomological degree $-1$ satisfying that 
\begin{equation}
\label{eq:m3IK}
     (f \otimes g)\big(\lr{a,b}\big) =  \gan\big(m_{3}(b,tg,a),tf\big) = \gan\big(m_{3}(tf,b,tg),a\big),
\end{equation}
for all $a, b \in A$ and $f, g \in A^{\#}$. 
Note that $m_{3}$ is $1_{A}$-normalized, \textit{i.e.} it vanishes if one of the arguments is the unit $1_{A}$ of $A$. 
The fact that $\lr{ \hskip 0.6mm,}$ is skew-symmetric (\textit{i.e.} \ref{item:double1}) tells us that $m_{3}$ satisfies \eqref{eq:ip1}. 

As a side note, we leave to the reader to verify the easy assertion that this graded algebra together with the natural bilinear form of degree $-1$ defined in \eqref{eq:natform} is in fact a pre-Calabi-Yau structure, by taking $m_{2}$ to be the product of $\partial A = A \oplus A^{\#}[-1]$, and $m_{n} = 0$, for all $n \neq 2$. 
Furthermore, combining this with the comments in the first paragraph of Remark \ref{remark:findim}, we see that a pre-Calabi-Yau structure (on a vector space concentrated in degree zero) with $m_{n} = 0$ for $n \neq 2$ is exactly the same thing as an associative algebra.

Recall that $B_{0} = A$, $B_{1} = A^{\#}[-1]$, and we set $B_{\bar{i}} = \otimes_{j=1}^{k} B_{i_{j}}$, for $k \in \NN$ and $\bar{i} =(i_{1},\dots,i_{k}) \in \{ 0, 1 \}^{k}$. 
To reduce space, an element $x_{1} \otimes \dots \otimes x_{k} \in B_{\bar{i}}$ will be denoted simply by 
$[x_{1}, \dots, x_{k}]$. 
There is an obvious action of the cyclic group $C_{k}$ on $\{ 0, 1 \}^{k}$ by cyclic permutations, \textit{i.e.} $\sigma \cdot (i_{1},\dots,i_{k}) = (i_{\sigma^{-1}(1)},\dots,i_{\sigma^{-1}(k)})$, for all $\sigma \in C_{k}$. 
Let $C_{k}.\bar{i}$ be an orbit of this action and set
\[     \bar{B}_{\bar{i}} = \bigoplus_{\bar{j} \in C_{k}.\bar{i}} B_{\bar{j}}.     \]
Then $\bar{B}_{\bar{i}}$ has an action of $C_{k}$, given by $\sigma \cdot [x_{1},\dots,x_{k}] = (-1)^{K} [x_{\sigma^{-1}(1)},\dots,x_{\sigma^{-1}(k)}]$, for all $\sigma \in C_{k}$, where $K$ is the Koszul sign. 
Define $B_{\bar{i}}^{\circ} = \bar{B}_{\bar{i}}/C_{k}$ the space of coinvariants under this action and $\pi_{\bar{i}} : B_{\bar{i}} \rightarrow B_{\bar{i}}^{\circ}$ as the composition of the inclusion $B_{\bar{i}} \subseteq \bar{B}_{\bar{i}}$ and the canonical projection $\bar{B}_{\bar{i}} \rightarrow B_{\bar{i}}^{\circ}$. 
The class $\pi_{\bar{i}}([x_{1}, \dots, x_{k}])$ of $[x_{1}, \dots, x_{k}] \in B_{\bar{i}} \subseteq \bar{B}_{\bar{i}}$ will be denoted by $\langle x_{1}, \dots, x_{k} \rangle$. 

For $i, j \in \NN$ such that $i+j$ is odd, we define 
\begin{equation}
\label{eq:newcij}
     C_{i,j} = \begin{pmatrix} i+j - 2 \\ i - 1 \end{pmatrix} (-1)^{1+ \frac{i+j-1}{2}} \frac{\mathcal{B}_{i+j-1}}{(i+j-1)!} \para^{\frac{i+j-1}{2}},
\end{equation}
where $\mathcal{B}_{\ell}$ denotes the $\ell$-th Bernoulli number. 
We refer the reader to \cite{AIK14}, for a nice introduction on Bernoulli numbers. 
It is clear that $C_{1,2} = \para/12$, and they satisfy $C_{i,j}= C_{j,i}$, for all $i, j \in \NN$ such that $i+j$ is odd. 

For $k \in \ZZ_{\geq 3}$ odd, let $\bar{e}_{k} \in \{ 0, 1 \}^{k}$ be the element all of whose components are $1$. 
The linear map 
\begin{equation}
\label{eq:ev}
   \operatorname{ev}_{k} : B_{\bar{e}_{k}}^{\circ} \rightarrow \kk
\end{equation}
sending $\langle tf_{1}, \dots , tf_{k} \rangle$ to $f_{1}(1_{A}) \dots f_{k}(1_{A})$ is clearly well-defined and is homogeneous of degree $-k$. 
Moreover, for $k \in \ZZ_{\geq 3}$ odd and $\bar{i} \in \{ 0, 1 \}^{k+2}$ such that $|\bar{i}| = i_{1} + \dots + i_{k+2} = k$, the linear map 
\begin{equation}
\label{eq:mu}
   \mathscr{M}_{k} : B_{\bar{i}}^{\circ} \rightarrow B_{\bar{e}_{k}}^{\circ}
\end{equation}
given by sending $\langle tf_{1}, \dots, tf_{i},a,tg_{1},\dots,tg_{j},b \rangle$, where $j \geq i$ and $i+j=k$, to $C_{i,j}$ times
\begin{equation}
\label{eq:mu2}
   \begin{cases}
   0, &\text{if $i = 0$,}
   \\
   &
   \\
   \begin{aligned}
       \langle &tf_{1} \cdot a,tg_{1},\dots,tg_{j-1},tg_{j} \cdot b \rangle 
        + \langle b \cdot tf_{1}, a \cdot tg_{1},tg_{2},\dots,tg_{j} \rangle 
        \\
       - &\langle tf_{1}, a \cdot tg_{1},tg_{2},\dots,tg_{j-1},tg_{j} \cdot b \rangle 
        - \langle b \cdot tf_{1} \cdot a, tg_{1},\dots,tg_{j} \rangle, 
    \end{aligned} 
    &\text{if $i =1$,}
   \\
   &
   \\
   \begin{aligned}
       \langle &tf_{1}, \dots, tf_{i-1},tf_{i} \cdot a,tg_{1},\dots,tg_{j-1},tg_{j} \cdot b \rangle 
       \\
       + &\langle b \cdot tf_{1},  tf_{2}, \dots,tf_{i}, a \cdot tg_{1},tg_{2},\dots,tg_{j} \rangle 
       \\
       - &\langle tf_{1}, \dots, tf_{i}, a \cdot tg_{1},tg_{2},\dots,tg_{j-1},tg_{j} \cdot b \rangle 
       \\
       - &\langle b \cdot tf_{1}, tf_{2}, \dots, tf_{i-1},tf_{i} \cdot a, tg_{1},\dots,tg_{j} \rangle, 
    \end{aligned} 
    &\text{if $i \geq 2$,}
   \end{cases}
\end{equation} 
is well-defined for all $f_{1}, \dots, f_{i}, g_{1}, \dots, g_{j} \in A^{\#}$ and $a, b \in A$. 
It is clear that \eqref{eq:mu} is homogeneous of degree zero. 
Note that the parity of $k$ implies that the Koszul sign in the action of the cyclic group $C_{k}$ does not produce any negative sign. 

For any even integer $n \geq 4$, define $m_{n} : \partial A^{\otimes n} \rightarrow \partial A$ 
as the unique map satisfying that 
\begin{equation}
\label{eq:mun}
   \gan \circ (m_{n} \otimes \mathrm{id}_{\partial A})|_{B_{\bar{i}}} = 
  \operatorname{ev}_{n-1} \circ \mathscr{M}_{n-1} \circ \pi_{\bar{i}},
\end{equation}
for all $\bar{i} \in \{ 0 , 1 \}^{n+1}$ such that $| \bar{i} | = n - 1$, and $\gan \circ (m_{n} \otimes \mathrm{id}_{\partial A})|_{B_{\bar{i}}} = 0$ if $| \bar{i} | \neq  n - 1$. 
Note that the degree of $m_{n}$ is precisely $2-n$ for all even integers $n \geq 4$, it is normalized with respect to $1_{A}$
(\textit{i.e.} it vanishes if one of its arguments is $1_{A}$), it is acceptable and it satisfies \eqref{eq:ip1}. 
Note that $m_{n}$ vanishes for all $n \geq 4$ if $\para = 0$, so in that case the full structure on $\partial A$ given by $\{ m_{n} \}_{n \in \NN}$ reduces to the one introduced in \cite{IK17}.

\subsection{The statement of the main result}

The next result establishes a remarkable link between double quasi-Poisson algebras and pre-Calabi-Yau algebras, which can be regarded as an extension of \cite{IK17}, Thm. 4.2 (see also \cite{IKV19}, Thm. 4.2). 
The correspondence between double Poisson algebras and certain pre-Calabi-Yau algebras was worked out in the differential graded setting in \cite{FH19}, Thm. 5.1, and for double $P_{\infty}$-algebras in \cite{FH19}, Thm. 6.3. 
For a very interesting conceptual explanation of the relation between pre-Calabi-Yau structures and noncommutative Poisson structures 
in terms of the higher cyclic Hochschild cohomology, see \cite{IK19}.

\begin{theorem}
\label{theorem:Main}
Let $A$ be a(n associative) $\kk$-algebra with unit $1_{A}$ and let $\lr{ \hskip 0.6mm , } : A^{\otimes 2} \rightarrow A^{\otimes 2}$ be a double bracket. 
Assume that $A$ is double quasi-Poisson. 
Then, $\partial A$ provided with the usual multiplication $m_{2}$, as well as the maps $m_{3}$ and $\{ m_{n} : n \in 2.\NN_{\geq 2} \}$ defined in \eqref{eq:m3IK} and \eqref{eq:mun}, respectively, is a strictly unitary $A_{\infty}$-algebra and it defines a structure of pre-Calabi-Yau algebra on $A$. 
\end{theorem}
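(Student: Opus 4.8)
The strategy is to verify directly that the collection $\{m_n\}_{n\in\NN}$ defined in Subsection \ref{subsection:definition} satisfies all the Stasheff identities $\operatorname{SI}(N)$ for $N\in\NN$, together with the cyclicity condition \eqref{eq:ip1} and the structural conditions \ref{item:pCY1}--\ref{item:pCY2} of Definition \ref{definition-pre-d-CY}; once this is done, strict unitarity and the pre-Calabi-Yau property follow from the remarks already recorded (each $m_n$ with $n\neq 2$ is $1_A$-normalized and acceptable, $m_2$ is the associative product with unit $(1_A,0)$, and $\gan_A$ is the natural bilinear form). First I would reduce the bookkeeping: since every $m_n$ is acceptable and the whole algebra $\partial A=B_0\oplus B_1$ is good, when we evaluate $\operatorname{SI}(N)$ on a homogeneous tensor $[x_1,\dots,x_N]\in B_{\bar i}$ only a handful of the summands $(r,s,t)$ contribute, namely those for which both the inner insertion $m_s|_{B_{\bar i'}}$ and the outer composite $m_{r+1+t}$ land on admissible patterns (no two adjacent $B_0$'s, correct total weight $|\bar i|\in\{N-2,N-1\}$ by Remark \ref{remark:restmult}). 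By Lemma \ref{lemma:cyc} it suffices to check the pairing version $\operatorname{SI}(N)_\gan$, and by cyclic invariance of $\operatorname{SI}(N)_\gan$ one may further normalize the position of the last slot. This cuts the verification down to a finite list of ``shapes'' of input words in $B_0$ and $B_1$.

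The argument then splits according to $N$. For $N=1$ there is nothing ($m_1=0$). For $N=2$, $\operatorname{SI}(2)$ is the associativity of $m_2$ on $\partial A$, which is immediate from the bimodule structure on $A^{\#}[-1]$. For $N=3$, $\operatorname{SI}(3)_\gan$ encodes that $m_3$ is an $m_2$-derivation in each argument; after unwinding \eqref{eq:m3IK} this is exactly the Leibniz rule \ref{item:double2} for the double bracket, and cyclic skew-symmetry \ref{item:double1} gives \eqref{eq:ip1} for $m_3$ — this is the part already present in \cite{IK17}. For $N=4$, $\operatorname{SI}(4)_\gan$ combines $m_2$-with-$m_3$ terms and the $m_3$-with-$m_3$ term together with the newly introduced $m_4$; unwinding the definitions, the $m_3\circ m_3$ contribution is precisely the noncommutative Jacobiator $\lr{\,,\,,}$ of \eqref{triple-bracket}, while the $m_4$ terms, via \eqref{eq:mun}, \eqref{eq:mu2} and the value $C_{1,2}=\para/12$, reproduce exactly $\frac{\para}{12}\lr{\,,\,,}_{\E^3}$ as written out in \eqref{RHS-quasi-Poisson-double}; hence $\operatorname{SI}(4)_\gan=0$ is equivalent to the modified double Jacobi identity \eqref{eq:double-quasi-poisson-def}. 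This is the conceptual heart of the theorem: the double quasi-Poisson axiom is \emph{defined} to be $\operatorname{SI}(4)$.

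For $N\geq 5$ the identities $\operatorname{SI}(N)_\gan$ no longer follow from the axioms of $A$ but must hold \emph{identically}, purely because of the shape of the coefficients $C_{i,j}$ in \eqref{eq:newcij}. Here the plan is to evaluate $\operatorname{SI}(N)_\gan$ on a generic word in $B_{\bar i}$ with $|\bar i|=N-1$ (the only nonzero case, by the degree constraint), expand each of the admissible $(r,s,t)$-terms using \eqref{eq:mun}--\eqref{eq:mu2}, push everything through $\operatorname{ev}_{N-1}$ so that all the $f$'s and $g$'s get evaluated at $1_A$, and collect the resulting monomials $a_1\cdots$ in $A$; grouping monomials of the same cyclic type, the vanishing reduces to a family of scalar identities among the $C_{i,j}$. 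These scalar identities are the combinatorial content: using $d\binom{x}{d}=(x-d+1)\binom{x}{d-1}$, the Pascal identities \eqref{eq:binpas}--\eqref{eq:binpasgen}, and the defining recursion for Bernoulli numbers $\sum_{k}\binom{n+1}{k}\mathcal B_k=0$, each such identity should telescope to zero. The main obstacle — and the most laborious step — is precisely this last one: organizing the combinatorics of which $(r,s,t)$-summands, inner-term types ($i=0$, $i=1$, $i\geq2$ cases of \eqref{eq:mu2}), and cyclic rotations produce a given output monomial, and then recognizing the alternating sums of products $\binom{\cdot}{\cdot}\mathcal B_{\cdot}$ that appear as instances of the Bernoulli recursion. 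It is this bookkeeping, handled by splitting into the many cases of input words, that constitutes the bulk of the proof; the authors indicate the coefficients themselves were found only after extensive machine computation, so I expect no shortcut and would proceed case by case, case on the position $\ell$ of the second $B_0$-entry and on whether insertions straddle the $B_0$-entries or not.
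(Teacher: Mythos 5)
Your overall scheme (verify the Stasheff identities case by case on words in $B_0$ and $B_1$, reduce the higher ones to scalar identities among the constants $C_{i,j}$) is the same as the paper's, but two things go wrong. First, your indexing is systematically off by one: associativity of $m_2$ is $\operatorname{SI}(3)$, the Leibniz rule \ref{item:double2} is encoded in $\operatorname{SI}(4)$ (Lemma \ref{lemma:si4}), and the modified double Jacobi identity \eqref{eq:double-quasi-poisson-def} — the ``conceptual heart'' — enters only at $\operatorname{SI}(5)$, where the $m_3\circ m_3$ terms produce the Jacobiator and the $m_2$/$m_4$ terms produce $\frac{\para}{12}\mu(\E^3)$ (Lemma \ref{lemma:si5}); likewise the degree constraint for $\operatorname{SI}(N)_{\gan}$ on $N+1$ inputs forces $|\bar i|=N-2$, not $N-1$. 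These are repairable slips, but they indicate the verification was not actually carried out.

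The genuine gap is in your last step. After reducing the odd identities $\operatorname{SI}(N)_{\gan}$, $N\geq 7$, to scalar identities among the $C_{i,j}$, you propose to prove these by ``telescoping'' with Pascal's identity and the linear Bernoulli recursion $\sum_k\binom{n+1}{k}\mathcal B_k=0$. But the identities that actually arise, namely \eqref{eq:Cgen} in Fact \ref{fact:cgen}, are \emph{quadratic} in the Bernoulli numbers: by \eqref{eq:newcij} they equate binomially weighted convolution sums $\sum_i\binom{2i-1}{\,\cdot\,}\binom{2k-2i-1}{\,\cdot\,}\frac{\mathcal B_{2i}}{(2i)!}\frac{\mathcal B_{2k-2i}}{(2k-2i)!}$ with linear terms in $\mathcal B_{2k}$ (weighted analogues of Euler's quadratic identity). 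Such identities do not follow from the linear recursion by elementary telescoping; the paper proves them by exhibiting each $\mathcalboondox{Eq}(\ell,\ell',\ell'')$ as an explicit linear combination \eqref{eq:ide} of the Bernoulli identities of \cite{BCM17}, Thm. 2.4 (themselves coming from the Lawrence--Sullivan dg Lie algebra), after a nontrivial massaging of the coefficients $\mu_{2j}$ via \eqref{eq:binid}--\eqref{eq:binpasgen}. Without this input (or an equivalent generating-function argument), your plan fails exactly at the step you yourself identify as the bulk of the work, so the proposal as written does not yield a proof.
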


Note that, if $\para = 0$, the previous result is exactly \cite{IK17}, Thm. 4.2. 
Moreover, if $\para = 0$, our proof below reduces to that in \cite{IK17}, where all the Stasheff identities (\hyperlink{eq:ainftyalgebralink}{$\operatorname{SI}(N)$}) with $N \geq 6$ immediately vanish, as $m_{n} = 0$ for $n \geq 4$. 
This is in stark contrast with the case $\para \neq 0$, for which an infinite number of Stasheff identities needs to be inspected. 
Our proof consists in carefully studying all the Stasheff identities and showing that our choice of higher multiplications $m_{n}$ for $n \geq 4$ 
makes all of the Stasheff identities vanish, regardless of the value of $\para$.

\subsection{The proof of the main result} 

	We first note the following simple result.
	\begin{fact}
		\label{fact:pCY2}
		The maps $m_{n}$ defined in \eqref{eq:m3IK} for $n= 3$, and \eqref{eq:mun} for $n \geq 4$, satisfy conditions \ref{item:pCY1} and \ref{item:pCY2}.  
	\end{fact}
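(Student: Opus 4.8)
\textbf{Proof proposal for Fact \ref{fact:pCY2}.}

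The plan is to verify conditions \ref{item:pCY1} and \ref{item:pCY2} directly from the explicit definitions of the higher multiplications. For \ref{item:pCY1}, I would check that $m_{n}(A^{\otimes n}) \subseteq A$ for every $n \in \NN$. This is immediate for $n = 2$ since the product $m_{2}$ restricted to $A^{\otimes 2}$ is the original associative product of $A$. For $n = 3$, the map $m_{3}$ is good by construction, so $m_{3}(B_{i_{1}} \otimes \dots \otimes B_{i_{3}}) \subseteq B_{i_{1}}$ for $\bar{i} \in \mathscr{I}_{3}$ and $m_{3}|_{T_{3,b}} = 0$; since $(0,0,0) \notin \mathscr{I}_{3}$ lies in the complement and $m_{3}$ is good hence vanishes there — wait, rather: $A^{\otimes 3} = B_{(0,0,0)}$ and $(0,0,0)$ has $i_{j} = i_{j+1} = 0$, so $(0,0,0) \notin \mathscr{I}_{3}$, meaning $B_{(0,0,0)} \subseteq T_{3,b}$ and $m_{3}$ vanishes on it, which gives \ref{item:pCY1} trivially for $n = 3$. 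For even $n \geq 4$, the map $m_{n}$ is defined via \eqref{eq:mun}, and the right-hand side is nonzero only when $|\bar{i}| = n-1$; since $A^{\otimes n} = B_{\bar{0}}$ has $|\bar{0}| = 0 \neq n-1$, we get $m_{n}|_{A^{\otimes n}} = 0$, so \ref{item:pCY1} holds for $n \geq 4$ as well.

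For condition \ref{item:pCY2}, I would fix $n \geq 3$, $\ell \in \llbracket 0, n-2 \rrbracket$, and $\bar{j} = (j_{1},\dots,j_{n-1}) \in \NN_{0}^{n-1}$ with $|\bar{j}| = 2$ and $j_{k} = 0$ for $k \notin \{1, \ell+1\}$, and I would exhibit the linear map $b_{\bar{j}} : A[1]^{\otimes 2} \rightarrow A^{\otimes(n-1)}$ satisfying \eqref{eq:CY}. The case $n = 3$ is handled by \eqref{eq:m3IK} directly: the identity $(f \otimes g)(\lr{a,b}) = \gan(m_{3}(tf,b,tg),a)$ exhibits $b_{\bar{j}}(sa \otimes sb) = \lr{a,b}$ for the appropriate $\bar{j}$ (using skew-symmetry \ref{item:double1} to cover both positions of the two "slot" indices). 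For even $n \geq 4$, I would read off $b_{\bar{j}}$ from \eqref{eq:mun}: the defining relation $\gan \circ (m_{n} \otimes \mathrm{id}_{\partial A})|_{B_{\bar{i}}} = \operatorname{ev}_{n-1} \circ \mathscr{M}_{n-1} \circ \pi_{\bar{i}}$, evaluated on $[a, tf_{1}, \dots, tf_{\ell}, b, tf_{\ell+1}, \dots, tf_{n-2}] \otimes tf_{n-1}$ (which has $|\bar{i}| = n-1$, so the relation is the nontrivial one), yields after applying \eqref{eq:mu2} and \eqref{eq:ev} an expression of the form $(f_{1} \otimes \dots \otimes f_{n-1})(b_{\bar{j}}(sa \otimes sb))$, where $b_{\bar{j}}(sa \otimes sb)$ is the $C_{i,j}$-scaled alternating sum of tensors in $A^{\otimes(n-1)}$ obtained by substituting $tf_{k} \mapsto f_{k}(1_{A})$-type evaluations and $a, b$ into the cyclic words — concretely, replacing each $tf_{p} \cdot a$, $b \cdot tf_{q}$, etc., by the corresponding element $1_{A}, a, b$ or products thereof in the tensor slots. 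The only subtlety is matching which $\ell$ corresponds to which of the four terms in \eqref{eq:mu2} (i.e. which $B_{\bar{i}}$-component of $\bar{B}_{\bar{e}_{n-1}}$ one lands in after applying $\mathscr{M}_{n-1}$), but since $\operatorname{ev}_{n-1}$ kills all but the single orbit representative $\langle tf'_{1}, \dots, tf'_{n-1}\rangle$ and extracts the product of the $f'_{k}(1_{A})$, the resulting $b_{\bar{j}}$ is well-defined and linear in $sa \otimes sb$.

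The main obstacle — really the only point requiring care — is bookkeeping: one must track how the cyclic-coinvariant quotient $B_{\bar{i}}^{\circ}$, the map $\mathscr{M}_{n-1}$, and the evaluation $\operatorname{ev}_{n-1}$ interact so that \eqref{eq:mun} genuinely produces a map of the form prescribed in \ref{item:pCY2}, with $b_{\bar{j}}$ independent of the $f_{k}$'s. Since $\operatorname{ev}_{n-1}$ is defined on $B_{\bar{e}_{n-1}}^{\circ}$ by $\langle tf_{1},\dots,tf_{n-1}\rangle \mapsto f_{1}(1_{A})\cdots f_{n-1}(1_{A})$, and each term of $\mathscr{M}_{n-1}$ in \eqref{eq:mu2} is a cyclic word of the form $\langle th_{1}, \dots, th_{n-1}\rangle$ with each $h_{p}$ equal to some $f_{k}$ possibly multiplied on one side by $a$ or $b$, the composite $\operatorname{ev}_{n-1} \circ \mathscr{M}_{n-1}$ evaluated on such a word equals a sum of products $\prod_{k} f_{k}(w_{k})$ where each $w_{k} \in A$ is one of $1_{A}$, $a$, $b$, $ab$, $ba$, or $bxa$-type products, none of which involves the $f$'s; this is precisely $(f_{1} \otimes \dots \otimes f_{n-1})$ applied to an explicit tensor $b_{\bar{j}}(sa \otimes sb) \in A^{\otimes(n-1)}$, establishing \eqref{eq:CY}. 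Hence both \ref{item:pCY1} and \ref{item:pCY2} hold.
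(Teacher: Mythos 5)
Your proposal is correct and follows essentially the same route as the paper: condition \ref{item:pCY1} is read off directly from the definitions \eqref{eq:m3IK} and \eqref{eq:mun}, and condition \ref{item:pCY2} is verified by observing that $\operatorname{ev}_{n-1} \circ \mathscr{M}_{n-1} \circ \pi_{\bar{i}}$ produces sums of products $\prod_{k} f_{k}(w_{k})$ with the $w_{k}$ words in $a,b,1_{A}$, hence of the form $(f_{1}\otimes\dots\otimes f_{n-1})(b_{\bar{j}}(sa\otimes sb))$ — the paper just goes one step further and writes the resulting tensors $b_{\bar{j}}$ explicitly (with $b_{(2,0,\dots,0)}=0$ and the $C_{\ell,n-\ell-1}$-scaled formulas). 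One cosmetic slip: for $n=3$ the cyclicity computation and skew-symmetry give $b_{(1,1)} = -\lr{\hskip 0.6mm,}\circ(s_{A}^{-1}\otimes s_{A}^{-1})$ rather than $+\lr{a,b}$, but since \ref{item:pCY2} only asks for existence of some linear $b_{\bar{j}}$, this does not affect the argument.
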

	\begin{proof}
		The verification of condition \ref{item:pCY1} for $n \geq 3$ is a direct consequence of definitions \eqref{eq:m3IK} and \eqref{eq:mun}. 
		We now proceed to prove \ref{item:pCY2}. 	
		The case of $m_{3}$ is clear, since in that case $b_{(2,0)} = 0$ and $b_{(1,1)} = - \lr{\hskip 0.6mm , } \circ (s_{A}^{-1} \otimes s_{A}^{-1})$.
		For $n \geq 4$, recall that we have $\ell \in \llbracket 0 , n-2 \rrbracket$ and $\bar{j} = (j_{1},\dots,j_{n-1}) \in \NN_{0}^{n-1}$ with $|\bar{j}| = \sum_{k=1}^{n-1} j_{k} = 2$ and $j_{k} = 0$ if $k \notin \{ 1, \ell + 1 \}$. 
		Then, using \eqref{eq:CY} and \eqref{eq:mun} we get that $b_{(2,0,\dots,0)} = 0$, 
		\begin{small}
			\begin{equation}
			\begin{split}
			&b_{\text{\begin{tiny}$(1,1,\underset{(n-3) \text{ $0$'s}}{\underbrace{\text{\begin{tiny}$0,\dots,0$\end{tiny}}}})$\end{tiny}}}(sa , sb) 
			\\
			&= 
			C_{1,n-2} \bigg( a \otimes b \otimes 1_{A}^{\otimes (n-3)} -  b a \otimes 1_{A}^{\otimes (n-2)} +  b \otimes  1_{A}^{\otimes (n-3)} \otimes a -  1_{A} \otimes b \otimes 1_{A}^{\otimes (n-4)} \otimes a \bigg),     
			\end{split}
			\end{equation}
		\end{small}
		and
		\begin{small}
			\begin{equation}
			\begin{split}
			&b_{\text{\begin{tiny}$(1,\underset{(n-3) \text{ $0$'s}}{\underbrace{\text{\begin{tiny}$0,\dots,0$\end{tiny}}}},1)$\end{tiny}}}(sa , sb) 
			\\
			&= 
			C_{1,n-2} \bigg( a \otimes 1_{A}^{\otimes (n-3)}  \otimes b -  a \otimes 1_{A}^{\otimes (n-4)} \otimes b  \otimes 1_{A} +  1_{A}^{\otimes (n-3)}  \otimes b \otimes a -  1_{A}^{\otimes (n-2)} \otimes a b \bigg),     
			\end{split}
			\end{equation}
		\end{small}
		as well as 
		\begin{small}
			\begin{equation}
			\begin{split}
			b_{\bar{j}}(sa , sb) = 
			C_{\ell,n-\ell-1} \bigg(& a \otimes 1_{A}^{\otimes (\ell-2)} \otimes \Big(1_{A} \otimes b - b \otimes 1_{A}\Big) \otimes 1^{\otimes (n-\ell-2)} 
			\\
			&-  1_{A}^{\otimes (\ell-1)} \otimes \Big(1_{A} \otimes b - b \otimes 1_{A}\Big) \otimes 1^{\otimes (n-\ell-3)} \otimes a  \bigg),     
			\end{split}
			\end{equation}
		\end{small}
		if $\ell \in \llbracket 2 , n-3 \rrbracket$.
		The claim is thus proved. 
	\end{proof}

Fact \ref{fact:pCY2} tells us that, in order to prove the main theorem, it suffices to show that the structure given on $\partial A$ 
by means of \eqref{eq:m3IK} and \eqref{eq:mun} is an $A_{\infty}$-algebra, since we have already observed in Subsection \ref{subsection:definition} that \eqref{eq:ip1} is verified with respect to the natural bilinear form.  
By hypothesis, we assume that the identity in Definition \ref{double-quasi-poisson-def} is verified (see also Definition \ref{def:n-bracket}). 
We are going to prove that the Stasheff identities (\hyperlink{eq:ainftyalgebralink}{$\operatorname{SI}(N)$}) for $\partial A = A \oplus A^{\#}[-1]$ hold. 
It is clear that (\hyperlink{eq:ainftyalgebralink}{$\operatorname{SI}(N)$}) holds for $N=1,2,3$, because $m_{1}$ vanishes and $m_{2}$ is associative. 
Thus, it remains to prove that (\hyperlink{eq:ainftyalgebralink}{$\operatorname{SI}(N)$}) is satisfied for all integers $N \geq 4$. 
Instead of working with the Stasheff identities (\hyperlink{eq:ainftyalgebralink}{$\operatorname{SI}(N)$}), we will work with 
the equivalent identity (\hyperlink{eq:ainftyalgebragammalink}{$\operatorname{SI}(N)_{\gan}$}). 
This is done in Lemmas \ref{lemma:si4}, \ref{lemma:si5} and \ref{lemma:sieven}, and Proposition \ref{proposition:siodd}. 

It is clear that it suffices to prove that $\operatorname{SI}(N)_{\gan}|_{B_{\bar{i}}}$ vanishes for all $N \in \NN_{\geq 4}$ and $\bar{i} \in \{ 0, 1\}^{N+1}$, where $B_{\bar{i}} = B_{i_{1}} \otimes \dots \otimes B_{i_{N+1}}$, $B_{0} = A$ and $B_{1} = A^{\#}[-1]$.
Since \hyperlink{eq:ainftyalgebragammalinkop}{$\operatorname{SI}(N)_{\gan}$} has degree $2-N$, $\operatorname{SI}(N)_{\gan}|_{B_{\bar{i}}}$ vanishes for all $N \in \NN_{\geq 4}$ and $\bar{i} \in \{ 0, 1\}^{N+1}$ such that $| \bar{i} | = \sum_{j=1}^{N+1} i_{j} \neq N-2$, we shall prove that $\operatorname{SI}(N)_{\gan}|_{B_{\bar{i}}}$ vanishes under the assumption that $| \bar{i} | = N-2$. 
 
\subsubsection{\texorpdfstring{The Stasheff identity $\operatorname{SI}(4)$}{The Stasheff identity SI(4)}} 

The following lemma was proved in the main result of \cite{IK17} (see also \cite{IKV19}, Thm. 4.2 and \cite{FH19}, Thm. 5.2), but we recall its proof for the reader's convenience. 
 
\begin{lemma}
\label{lemma:si4}
Assume the same hypotheses as in Theorem \ref{theorem:Main}. 
We use the previously introduced notation. 
Then, the Stasheff identity (\hyperlink{eq:ainftyalgebragammalink}{$\operatorname{SI}(4)_{\gan}$}) for $\partial A = A \oplus A^{\#}[-1]$ holds. 
\end{lemma}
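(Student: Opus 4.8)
The plan is to verify the Stasheff identity $\operatorname{SI}(4)_{\gan}$ on $\partial A$ by direct computation, reproducing the argument of \cite{IK17}. The key point is that $\operatorname{SI}(4)$ does not involve $m_{4}$ at all, hence is insensitive to the double quasi-Poisson correction: in every summand $(-1)^{r+st}m_{r+1+t}\circ(\mathrm{id}_{\partial A}^{\otimes r}\otimes m_{s}\otimes\mathrm{id}_{\partial A}^{\otimes t})$ of $\operatorname{SI}(4)$ in which $m_{4}$ occurs, $m_{1}$ occurs too: if $m_{4}$ is the outer factor then $r+t=3$ forces $s=1$, whereas if $m_{4}$ is the inner factor then $s=4$ forces $r=t=0$ and the outer factor is $m_{1}$. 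Since $m_{1}=0$, the identity $\operatorname{SI}(4)_{\gan}$ reduces to the vanishing of
\[
\gan\circ\Big(\big[m_{3}(m_{2}\otimes\mathrm{id}^{\otimes 2})-m_{2}(m_{3}\otimes\mathrm{id})-m_{3}(\mathrm{id}\otimes m_{2}\otimes\mathrm{id})-m_{2}(\mathrm{id}\otimes m_{3})+m_{3}(\mathrm{id}^{\otimes 2}\otimes m_{2})\big]\otimes\mathrm{id}\Big),
\]
which involves only $m_{2}$ and $m_{3}$ and is nothing but the encoding of the Leibniz identity \ref{item:double2} for the double bracket.

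As already remarked, it suffices to show that the restriction of this map to $B_{\bar i}$ vanishes for every $\bar i\in\{0,1\}^{5}$ with $|\bar i|=2$, and by Lemma \ref{lemma:cyc} it is enough to treat one representative of each orbit of the cyclic group $C_{5}$ acting on such tuples. There are exactly two orbits: the one with the two slots of type $1$ cyclically adjacent, with representative $\bar i=(1,1,0,0,0)$, and the one with those slots at cyclic distance $2$, with representative $\bar i=(1,0,1,0,0)$. For the first orbit, I would evaluate the above combination on $(tf,tg,a,b,c)$, with $a,b,c\in A$ and $f,g\in A^{\#}$: every term vanishes outright, because $m_{2}(tf,tg)=0$ for degree reasons, and because in each of the remaining four terms $m_{3}$ is fed a triple of arguments whose types do not alternate between $A$ and $A^{\#}[-1]$, so $m_{3}$ kills it since it is good.

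For the second orbit I would evaluate on $(tf,a,tg,b,c)$. Now only three summands survive, namely those built from $m_{3}(tf,a,tg)$, $m_{3}(a,tg,b)$ and $m_{3}(tf,a,m_{2}(tg,b))$, these being the only configurations in which $m_{3}$ receives an alternating triple. Using \eqref{eq:m3IK}, which expresses $m_{3}$ on triples of the forms $(tf,u,tg)$ and $(u,tg,v)$ through the double bracket $\lr{\hskip 0.6mm ,}$, together with the bimodule rule $\gan(a'\cdot tf\cdot b',c)=f(b'ca')$ on $A^{\#}[-1]$, each of the three resulting scalars becomes an expression of the shape $f(\dots)g(\dots)$ in the Sweedler components of $\lr{\hskip 0.6mm ,}$. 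Then I would use skew-symmetry \ref{item:double1} to rewrite all brackets in the form $\lr{a,b}$ or $\lr{a,c}$, and the Leibniz rule \ref{item:double2}, i.e. $\lr{a,bc}=b\lr{a,c}+\lr{a,b}c$, to expand them; the three contributions then cancel identically.

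The step I expect to be the main obstacle — although it is purely a matter of bookkeeping, with no conceptual content — is keeping track of the Koszul signs. In particular, since $m_{3}$ has odd degree and $x_{1}=tf$ is an odd element, the summand coming from $(r,s,t)=(1,3,0)$ carries an extra factor $(-1)^{|x_{1}|}=-1$ relative to the naive sign, and it is precisely this factor that makes the cancellation in the second case work; omitting it would leave a spurious nonzero residue. Everything else is mechanical.
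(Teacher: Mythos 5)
Your proposal is correct and follows essentially the same route as the paper: reduce by degree, use Lemma \ref{lemma:cyc} to pass to one representative per cyclic orbit, kill the orbit with adjacent $A^{\#}[-1]$-slots by goodness of $m_{3}$ (and $m_{2}|_{A^{\#}[-1]\otimes A^{\#}[-1]}=0$), and identify the surviving three terms on the remaining orbit with the Leibniz identity for $\lr{\hskip 0.6mm,}$; your sign analysis, including the Koszul factor $(-1)^{|m_3||tf|}$ in the $(r,s,t)=(1,3,0)$ summand, is correct and the three contributions do cancel. The only (cosmetic) difference is the choice of representative: the paper evaluates on $a\otimes b\otimes tf\otimes c\otimes tg$, where the first argument lies in $A$, so no nontrivial Koszul sign appears and the identity is literally \ref{item:double2}, whereas your representative $(tf,a,tg,b,c)$ additionally invokes \ref{item:double1} to convert to a first-argument Leibniz rule.
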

\begin{proof} 
As noted at the beginning of the subsection, it suffices to prove that $\operatorname{SI}(4)_{\gan}|_{B_{\bar{i}}}$ vanishes for all $\bar{i} \in \{ 0, 1\}^{5}$ such that $| \bar{i} | = \sum_{j=1}^{5} i_{j} = 2$, so we will assume from now on that $| \bar{i} | = 2$. 
Furthermore, using that $m_{3}$ is good, it is easy to show that every term of the identity (\hyperlink{eq:ainftyalgebragammalink}{$\operatorname{SI}(4)_{\gan}$}) restricted to $B_{\bar{i}}$ vanishes if $\bar{i} \in \{ 0,1\}^{5} \setminus \mathscr{B}_{5}$, where $\mathscr{B}_{5} = \{ (0,0,1,0, 1), (0,1,0,1,0), (1,0,1,0,0), (0,1,0,0,1), (1,0,0,1,0) \}$. 
It is easy to see that the cyclic group $C_{5}$ acts transitively on $\mathscr{B}_{5}$, so by Lemma \ref{lemma:cyc}, given $\bar{i}, \bar{i}' \in \mathscr{B}_{5}$, $\operatorname{SI}(4)_{\gan}|_{B_{\bar{i}}}$ vanishes if and only if $\operatorname{SI}(4)_{\gan}|_{B_{\bar{i}'}}$ does so. 
It thus suffices to prove that $\operatorname{SI}(4)_{\gan}|_{B_{\bar{i}}} = 0$ for one element $\bar{i}$ of $\mathscr{B}_{5}$. 
Let us choose $\bar{i} = (0,0,1,0,1)$. 
Then, $B_{\bar{i}} = A\otimes A\otimes A^{\#}[-1]\otimes A \otimes A^{\#}[-1]$ and 
consider $a \otimes b \otimes tf \otimes c \otimes tg \in B_{\bar{i}}$, where $a, b, c \in A$ and $f, g \in A^{\#}$. 
We will show that $\operatorname{SI}(4)_{\gan}(a \otimes b \otimes tf \otimes c \otimes tg) = 0$. 
Since $m_{3}$ is good, $m_3(a,b,(tf) \cdot c)=0=m_3(a,b,tf).c$, so (\hyperlink{eq:ainftyalgebragammalink}{$\operatorname{SI}(4)_{\gan}$}) evaluated at $a \otimes b \otimes tf \otimes c \otimes tg$ is tantamount to 
\begin{equation}
\label{eq:leib2-gan}
\gan\big( m_{3}(a b, t f, c) , tg\big)-\gan\big( m_{3}(a, b \cdot tf, c) , tg\big) -\gan\big(  a. m_{3}(b,tf,c) , tg\big) =0.
\end{equation} 
To prove that \eqref{eq:leib2-gan} holds, we will use the Leibniz property \ref{item:double2} in the definition of double bracket, \textit{i.e.} 
\begin{equation}
\label{eq:leib}
     \lr{c,ab} = \lr{c,a} b +  a\lr{c,b},
\end{equation}     
for all $a, b, c \in A$.
By applying $g\otimes f$, for arbitrary $f,g \in A^{\#}$, we see that \eqref{eq:leib} is tantamount to 
\begin{equation}
\label{eq:leib3}
   (g\otimes f)\big(\lr{c,ab}\big)-(g\otimes f)\big(\lr{c,a}b\big)-(g\otimes f)\big(a\lr{c,b}\big)=0.
\end{equation}     
By \eqref{eq:m3IK}, we have
\begin{equation}
\label{eq:leib31}
 (g\otimes f)\big(\lr{c,ab}\big)= \gan\big(m_{3}(a b, t f, c),tg\big),
\end{equation} 
which is the first term in \eqref{eq:leib2-gan}. 
Using that $(g \otimes f)(v \otimes (w.b))=(g \otimes (b \cdot f))(v \otimes w) $, for all $b, v , w \in A$ and $f, g \in A^{\#}$, we have that 
\begin{equation}
\label{eq:leib32bis}
-(g\otimes f)\big(\lr{c,a}b\big) =-\big(g \otimes (b \cdot f)\big)\big(\lr{c,a}\big) =-\gan\big(m_{3}(a, b \cdot tf, c),tg\big),
\end{equation}
which is the second term in \eqref{eq:leib2-gan}. 
In the last identity we used that $t(b \cdot f)=b \cdot tf $, for all $b \in A$ and $f \in A^{\#}$, and we applied \eqref{eq:m3IK}.

Finally, if we apply the identity $(g \otimes f)(a.v \otimes w)=((g \cdot a) \otimes f)(v \otimes w) $, for all $a, v , w \in A$ and $f, g \in A^{\#}$ to the third term of \eqref{eq:leib3}, together with \eqref{eq:m3IK}, we get
\begin{equation}
\label{eq:leib33}
\begin{split}
-(g\otimes f)\big(a\lr{c,b}\big) &=-\big((g \cdot a) \otimes f\big)\big(\lr{c,b}\big) = -\gan\big(m_3(b,tf,c), (tg) \cdot a\big) 
     \\
     &= - \gan\big((tg) \cdot a,m_{3}(b,tf,c)\big) =-\gan\big(a.m_3(b,tf,c),tg\big), 
\end{split}
\end{equation}
where we used the identity $(tg) \cdot a = t(g \cdot a)$ in the second equality, the supersymmetry of $\gan$ in the third equality and 
the cyclicity of $\gan$ with respect to $m_{2}$ in the last equality. 
Combining \eqref{eq:leib3} with \eqref{eq:leib31}, \eqref{eq:leib32bis} and \eqref{eq:leib33}, we conclude that 
the sum of the last three equations is zero, which proves \eqref{eq:leib2-gan}, as we claimed. 
The lemma is thus proved. 
\end{proof}

\subsubsection{\texorpdfstring{General setting for the Stasheff identity $\operatorname{SI}(N)$ with $N>4$}{General setting for the Stasheff identity SI(4) with N > 4}} 
\label{subsubsec:generalsetting}

We will now prove Stasheff's identities (\hyperlink{eq:ainftyalgebragammalink}{$\operatorname{SI}(N)_{\gan}$}) for all integers 
$N \geq 5$. 
Since the only nonzero higher multiplication maps are $m_{3}$ and $m_{n}$ with $n \in 2. \NN_{\geq 2}$, the only nontrivial Stasheff identities (\hyperlink{eq:ainftyalgebragammalink}{$\operatorname{SI}(N)_{\gan}$}) with $N \geq 5$ are 
\begin{enumerate}[label=(C\arabic*)]
\item\label{item:sieven} (\hyperlink{eq:ainftyalgebragammalink}{$\operatorname{SI}(n+2)_{\gan}$}) for all even $n \in \NN_{\geq 4}$, which involves only $m_{3}$ and $m_{n}$;

\item\label{item:siodd} (\hyperlink{eq:ainftyalgebragammalink}{$\operatorname{SI}(n+1)_{\gan}$}) for all even $n \in \NN_{\geq 4}$, which involves only $m_{2 i}$, for $i \in \llbracket 1, n/2 \rrbracket$, and also $m_{3}$ if $n=4$.
\end{enumerate}

\begin{remark}
\label{remark:tau1}
For $n \in \NN$ even, define $\bar{m}_{n}$ as the multiplication $m_{n}$ given in Subsection \ref{subsection:definition} when $\para = 1$. 
Then, \eqref{eq:newcij} tells us that $m_{n} = \para^{(n-2)/2} \bar{m}_{n}$, for all $n \in \NN$ even, where we use the convention $0^{0} = 1$ for $\para = 0$ and $n=2$. 
As explained in \ref{item:siodd}, \hyperlink{eq:ainftyalgebralink}{$\operatorname{SI}(N)$} for the higher multiplications $\{ m_{n} \}_{n \in \NN}$ and $N \geq 7$ odd is of the form 
\begin{equation}
\begin{split}
     \sum_{i=1}^{(N-1)/2} &\sum_{r=0}^{2i-1} m_{2 i} \circ (\mathrm{id}_{\partial A}^{\otimes r} \otimes m_{N+1-2 i} \otimes \mathrm{id}_{\partial A}^{\otimes (i-r+(N+1)/2)}) 
     \\
     &= \para^{(N-3)/2} \sum_{i=1}^{(N-1)/2} \sum_{r=0}^{2i-1} \bar{m}_{2 i} \circ (\mathrm{id}_{\partial A}^{\otimes r} \otimes \bar{m}_{N+1-2 i} \otimes \mathrm{id}_{\partial A}^{\otimes (i-r+(N+1)/2)}).  
     \end{split}   
\end{equation}
In consequence, if $N \geq 7$ is odd, (\hyperlink{eq:ainftyalgebralink}{$\operatorname{SI}(N)$}) for the higher multiplications $\{ m_{n} \}_{n \in \NN}$ (and any value of $\para$) holds if and only if (\hyperlink{eq:ainftyalgebralink}{$\operatorname{SI}(N)$}) for the higher multiplications $\{ \bar{m}_{n} \}_{n \in \NN}$ holds. 
In other words, it suffices to prove (\hyperlink{eq:ainftyalgebralink}{$\operatorname{SI}(N)$}) for $\para = 1$, if the integer $N \geq 7$ is odd. 
\end{remark}

As noted at the beginning of this subsection, it suffices to show that $\operatorname{SI}(N)_{\gan}(\omega)$ vanishes for any $\omega \in B_{\bar{i}}$, with $\bar{i} = (i_{1}, \dots, i_{N+1}) \in \{ 0 , 1 \}^{N+1}$ and $| \bar{i} | = N-2$. 
By applying a cyclic permutation to $\omega$ and using Lemma \ref{lemma:cyc}, we may assume without loss of generality that 
$i_{1} = i_{\ell+2} = i_{\ell+\ell'+3} = 0$, with $\ell \leq \min(\ell',\ell'')$ and if $\ell < \ell'$ then $\ell < \ell''$, where $\ell, \ell', \ell'' \in \NN_{0}$ satisfy that $\ell + \ell' + \ell'' = N-2$.
More precisely, we consider an element $\Omega \in A \times A^{\#}[-1]^{\ell} \times A \times A^{\#}[-1]^{\ell'} \times A \times A^{\#}[-1]^{\ell''}$ of the form 
\begin{equation}
\label{eq:Omega}
     \Omega = (a, tf_{1},\dots, tf_{\ell}, b, tg_{1}, \dots, tg_{\ell'}, c, th_{1}, \dots, th_{\ell''}), 
\end{equation}
for $f_{1},\dots, f_{\ell}, g_{1}, \dots, g_{\ell'}, h_{1}, \dots, h_{\ell''} \in A^{\#}$ and $a,b,c \in A$, such that $\ell \leq \min(\ell',\ell'')$ and if $\ell < \ell'$ then $\ell < \ell''$, where $\ell, \ell', \ell'' \in \NN_{0}$ satisfy that $\ell + \ell' + \ell'' = N-2$. 
The class of $\Omega$ in $A \otimes A^{\#}[-1]^{\otimes \ell} \otimes A \otimes A^{\#}[-1]^{\otimes \ell'} \otimes A \otimes A^{\#}[-1]^{\otimes \ell''}$ is precisely 
\begin{equation}
\label{eq:omega}
     \omega = \omega' \otimes th_{\ell''}, \text{ with } \omega' = [a, tf_{1},\dots, tf_{\ell}, b, tg_{1}, \dots, tg_{\ell'}, c, th_{1}, \dots, th_{\ell''-1}].
\end{equation}

\subsubsection{\texorpdfstring{The Stasheff identity $\operatorname{SI}(5)$}{The Stasheff identity SI(5)}} 

The proof of the next result follows the same pattern as the proof of the main result in \cite{IK17} (see also \cite{FH19}, Thm. 5.2), but 
the presence of $m_{4}$ makes it somehow subtler. 

\begin{lemma}
\label{lemma:si5}
Assume the same hypotheses as in Theorem \ref{theorem:Main}. 
We use the notation introduced in Subsubsection \ref{subsubsec:generalsetting}. 
Let $\omega \in B_{\bar{i}}$ be an element of the form \eqref{eq:omega}, with $\bar{i} = (i_{1}, \dots, i_{6}) \in \{ 0 , 1 \}^{6}$ and $| \bar{i} | = 3$. 
Then, $\operatorname{SI}(5)_{\gan}(\omega)$ vanishes. 
\end{lemma}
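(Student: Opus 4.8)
The strategy is to verify $\operatorname{SI}(5)_{\gan}(\omega)=0$ by brute force on all relevant index vectors $\bar{i}\in\{0,1\}^{6}$ with $|\bar{i}|=3$, reducing the work via the cyclic symmetry of Lemma \ref{lemma:cyc}. First I would note that $\operatorname{SI}(5)_{\gan}$ only involves the nonzero multiplications $m_2$, $m_3$, $m_4$ (by Remark \ref{remark:tau1}-type considerations, since $\operatorname{SI}(5)$ is $\operatorname{SI}(n+1)_{\gan}$ for $n=4$), so the terms entering are compositions $m_2\circ(m_4\otimes\mathrm{id})$, $m_4\circ(m_2\otimes\mathrm{id})$, and $m_3\circ(m_3\otimes\mathrm{id})$ in all admissible positions, together with a final pairing against $\gan$. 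Using acceptability of $m_2,m_3,m_4$ and goodness of $m_3$ (cf.\ the argument in the proof of Lemma \ref{lemma:si4}), most index vectors $\bar{i}$ make every single term of $\operatorname{SI}(5)_{\gan}|_{B_{\bar{i}}}$ vanish identically; I would first isolate the short list $\mathscr{B}_{6}\subseteq\{0,1\}^{6}$ of vectors for which at least one term survives, and organize these into $C_{6}$-orbits, so that by Lemma \ref{lemma:cyc} only one representative per orbit needs to be checked.

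Then, for each orbit representative, I would pass to the normal form $\Omega$ of \eqref{eq:Omega}--\eqref{eq:omega} dictated by Subsubsection \ref{subsubsec:generalsetting}, i.e.\ fix the pattern $(\ell,\ell',\ell'')$ of the three $A$-slots with $\ell+\ell'+\ell''=3$ and $\ell\le\min(\ell',\ell'')$ (hence $\ell=0$ or $\ell=1$), and write out $\operatorname{SI}(5)_{\gan}(\omega)$ explicitly. For the terms involving only $m_2$ and $m_3$, I would expand using the defining identities \eqref{eq:m3IK} for $m_3$ and the bimodule structure on $A^{\#}[-1]$, exactly as in the proof of Lemma \ref{lemma:si4}; these assemble into the double Jacobi expression $\lr{\,\cdot\,,\cdot\,,\cdot\,}$ applied to $a,b,c$ (read off via the identities \eqref{triple-bracket} and \eqref{eq:extension-left-double-bracket} after testing against the appropriate $f_i,g_j,h_k$). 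For the terms involving $m_4$, I would expand via its definition \eqref{eq:mun}, i.e.\ $\gan\circ(m_4\otimes\mathrm{id}_{\partial A})|_{B_{\bar{i}}}=\operatorname{ev}_{3}\circ\mathscr{M}_{3}\circ\pi_{\bar{i}}$ with $\mathscr{M}_3$ given by \eqref{eq:mu2} for $i=1$ (the cases $i=0$ give $0$, and $i\ge 2$ cannot occur for $k=3$) and the constant $C_{1,2}=\para/12$; this produces precisely the eight-term cyclic expression of the right-hand side \eqref{RHS-quasi-Poisson-double} of the modified double Jacobi identity.

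The key point is then that the $m_3\circ(m_3\otimes\mathrm{id})$ contribution equals $\lr{c,b,a}$ (up to the sign conventions and the cyclic relabeling), while the $m_4$ contributions equal $\frac{\para}{12}\lr{c,b,a}_{\E^3}$ as written in \eqref{RHS-quasi-Poisson-double}, and the hypothesis that $A$ is double quasi-Poisson, namely \eqref{eq:double-quasi-poisson-def}, forces their difference to vanish. For the index patterns where no $A$-slot is adjacent in the ``middle'' position, i.e.\ where $m_4$ cannot contribute (because the surviving $\mathscr{M}_3$ requires the specific slot configuration with $i=1$), the statement reduces to the case already handled implicitly in the $\operatorname{SI}(4)$ and $\operatorname{SI}(5)$ arguments of \cite{IK17}, i.e.\ to the Leibniz identity or the double Jacobi identity alone. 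I expect the main obstacle to be purely bookkeeping: tracking the Koszul signs coming from \eqref{eq:ip1}, from the suspension $t$, and from the cyclic permutations used to reach the normal form $\Omega$, and checking that after this sign tracking the $m_3m_3$-part and the $m_4m_2$-part line up exactly with the two sides of \eqref{eq:double-quasi-poisson-def} rather than differing by a stray sign or a combinatorial factor; the actual algebra, once signs are pinned down, is a short and mechanical comparison of two explicit eight- or twelve-term sums.
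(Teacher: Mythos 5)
Your treatment of the main case is exactly the paper's: after the cyclic reduction of Lemma \ref{lemma:cyc} there are four orbit representatives, $(\ell,\ell',\ell'')\in\{(0,0,3),(0,1,2),(0,2,1),(1,1,1)\}$, and for the alternating pattern $(1,1,1)$ one identifies the $m_3\circ(m_3\otimes\mathrm{id})$-contributions with the triple bracket $\lr{c,b,a}$ (via Fact \ref{lemma:lema-tecnico-Jacobi-statement} and the cyclicity of $\gan$) and the $m_2$/$m_4$-contributions with $\tfrac{\para}{12}\lr{c,b,a}_{\E^3}$ as in \eqref{RHS-quasi-Poisson-double}, so that \eqref{eq:double-quasi-poisson-def} gives the vanishing. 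Your observation that only $i=0,1$ can occur in \eqref{eq:mu2} when $k=3$ is also correct.

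However, your description of the remaining orbits is backwards, and as written it would leave a necessary verification undone. For the two patterns with adjacent $A$-slots, $(\ell,\ell',\ell'')=(0,1,2)$ and $(0,2,1)$, it is the $m_3\circ(m_3\otimes\mathrm{id})$-terms that vanish (acceptability of $m_3$), while $m_4$ \emph{does} contribute through its compositions with $m_2$: what remains is an identity of the shape $-\gan\big(a.m_4(b,tf,c,tg),th\big)+\gan\big(m_4(ab,tf,c,tg),th\big)-\gan\big(m_4(a,b\cdot tf,c,tg),th\big)=0$, and its analogue for the other pattern. This is not a consequence of the Leibniz identity \ref{item:double2}, nor of the double Jacobi identity, and it is not ``already handled'' in \cite{IK17}, where $m_4=0$ and no such terms exist; it must be checked directly by substituting the explicit values \eqref{eq:m4-caso-particular} of $m_4$ and verifying that the twelve resulting scalar terms cancel pairwise (they do, unconditionally, with no hypothesis on the double bracket). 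Only the pattern $(0,0,3)$ is the one where every term, including those involving $m_4$, vanishes trivially from \eqref{eq:mun}. If you add these two short explicit computations, and carry out the sign bookkeeping you already flagged, your argument coincides with the paper's proof.
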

\begin{proof} 
By the previous comments it suffices to prove that $\operatorname{SI}(5)_{\gan}(\omega)$ vanishes for $\omega$ of the form \eqref{eq:omega} and one of the following cases
\begin{enumerate}[label=(\textrm{SI}(5).\alph*)]
\item\label{item:caso1} $\ell = \ell' = 0$, and $\ell''= 3$;
\item\label{item:caso2} $\ell = 0$, $\ell' = 1$, and $\ell''= 2$;
\item\label{item:caso3} $\ell = 0$, $\ell' = 2$, and $\ell''= 1$;
\item\label{item:caso4} $\ell = \ell' = \ell''= 1$.
\end{enumerate}
First, we will consider \ref{item:caso4}.
We will show that $\operatorname{SI}(5)_{\gan}(\omega)$ coincides with 
evaluating $h \otimes g \otimes f$ at the left member of identity \eqref{eq:double-quasi-poisson-def} evaluated at $c \otimes b \otimes a$, where $\omega = a\otimes tf\otimes b\otimes tg\otimes c \otimes th$, $a,b,c\in A$ and $f,g, h \in A^{\#}$. 
The identity $\operatorname{SI}(5)_{\gan}(\omega) = 0$ is exactly
 \begin{small}
 \begin{equation}
  \label{eq:SI-5-caso-1-gan}
 \begin{aligned}
&\gan\big(m_3\big(m_3(a,tf,b),tg,c\big),th\big)
 +\gan\big(m_3\big(a,m_3(tf,b,tg),c\big),th\big) 
 - \gan\big(m_3\big(a,tf,m_3(b,tg,c)\big),th\big)
 \\
&= \gan\big(a.m_4(tf,b,tg,c),th\big) - \gan\big( m_4(a,tf,b,tg).c,th\big)
- \gan\big(m_4 (a \cdot tf,b,tg,c ) ,th\big)
 \\
 &\phantom{=} + \gan\big(m_4(a,tf \cdot b,tg,c),th\big)
 - \gan\big(m_4(a,tf,b \cdot tg,c),th\big)
 + \gan\big(m_4(a,tf,b,tg \cdot c),th\big).
\end{aligned}
\end{equation}
\end{small}
\hskip -0.8mm We will prove that this equation is precisely the value of $h \otimes g \otimes f$ at the identity \eqref{eq:double-quasi-poisson-def} evaluated at $c \otimes b \otimes a$. 
Since $(A,\lr{ \hskip 0.6mm ,})$ is a double quasi-Poisson algebra, using \eqref{triple-bracket} and \eqref{RHS-quasi-Poisson-double} in \eqref{eq:double-quasi-poisson-def}, we obtain the identity
\begin{equation}
\label{eq:quasi-Poisson-caso-1}
 \begin{aligned}
\lr{c,\lr{b,a}}_L&+ \sigma \lr{b,\lr{a,c}}_L+ \sigma^{2} \lr{a,\lr{c,b}}_L
\\
&=\frac{\para}{4}\Big(ac\otimes b\otimes 1_{A}-ac\otimes 1_{A}\otimes b-a\otimes cb\otimes 1_{A}+a\otimes c\otimes b
 \\
 &\phantom{= \frac{\para}{4}} +c\otimes 1_{A}\otimes ba-c\otimes b\otimes a+1_{A}\otimes cb\otimes a-1_{A}\otimes c\otimes ba\Big),
 \end{aligned}
\end{equation}
where $\sigma \in \mathbb{S}_{3}$ is the unique cyclic permutation sending $1$ to $2$. 
If we apply $h\otimes g\otimes f$ to \eqref{eq:quasi-Poisson-caso-1} we get
\begin{equation}
\label{eq:quasi-Poisson-caso-1-gan}
 \begin{aligned}
(h\otimes &g\otimes f)\Big(\lr{c,\lr{b,a}}_L+ \sigma \lr{b,\lr{a,c}}_L+ \sigma^{2} \lr{a,\lr{c,b}}_L\Big)
 \\
 &=\frac{\para}{4}\Big(f(1_{A})g(b)h(ac)-f(b)g(1_{A})h(ac)-f(1_{A})g(cb)h(a)+f(b)g(c)h(a)
\\
&\phantom{= \frac{\para}{4}} +f(ba)g(1_{A})h(c)-f(a)g(b)h(c)+f(a)g(cb)h(1_{A})-f(ba)g(c)h(1_{A})\Big).
 \end{aligned}
 \end{equation}
 
Next, it is straightforward to see that in the particular case when $n=4$, \eqref{eq:mu2} gives
\begin{small}
\begin{equation}
\label{eq:m4-caso-particular}
\begin{split}
m_4(tf,b,tg,c)&=\frac{\para}{12}\big(f(b)g(c)1_A+f(1_A)g(b)c-f(b)g(1_A)c-f(1_A)g(cb)1_A\big),
\\
m_4(a,tf,b,tg)&=\frac{\para}{12}\big(f(b)g(1_A)a+f(a)g(b)1_A-f(1_A)g(b)a-f(ba)g(1_A)1_A\big),
\\
m_4(a,tf,tg,c)&=\frac{\para}{12}\big(f(1_A)g(c)a+f(a)g(1_A)c-f(a)g(c)1_A-f(1_A)g(1_A)ac\big),
\end{split}
\end{equation}
\end{small}
\hskip -0.8mm where $a,b,c\in A$, $f,g,h\in A^{\#}$. 
Now, if we plug \eqref{eq:m4-caso-particular} into the right-hand side of \eqref{eq:SI-5-caso-1-gan}, the latter gives 
\begin{align*}
&\frac{\para}{12}\gan\Big(a\big(f(b)g(c)1_A-f(b)g(1_A)c+f(1_{A})g(b)c-f(1_A)g(cb)1_A\big),th\Big)
\\
&-\frac{\para}{12}\gan\Big(\big(f(b)g(1_A)a-f(1_A)g(b)a+f(a)g(b)1_A-f(ba) g(1_A) 1_A\big)c,th\Big)
\\
&-\frac{\para}{12}\gan\Big(\big(f(ba)g(c)1_A-f(ba)g(1_A)c+f(a)g(b)c-f(a)g(cb)1_A\big),th\Big)
\\
&+\frac{\para}{12}\gan\Big( \big(f(b)g(c)a-f(ba)g(c)1_A+f(ba)g(1_A)c-f(b)g(1_A)ac\big),th\Big)
\\
&-\frac{\para}{12}\gan\Big(\big(f(1_A)g(cb)a-f(a)g(cb)1_A+f(a)g(b)c-f(1)g(b)ac\big),th\Big)
\\
&+\frac{\para}{12}\gan\Big(\big(f(b)g(c)a-f(1_A)g(cb)a+ f(a) g(cb) 1_A-f(ba) g(c) 1_{A} \big),th\Big).
\end{align*}
Applying the definition of $\gan$ given in \eqref{eq:natform}, the previous expression is equivalent to 
 \begin{equation}
  \label{eq:RHS-quasi}
 \begin{split}
 &\frac{\para}{12}\Big(3f(1_{A})g(b)h(ac)-3f(b)g(1_{A})h(ac)-3f(1_{A})g(cb)h(a)+3f(b)g(c)h(a)
\\
&\phantom{\frac{\para}{12}} + 3f(ba)g(1_{A})h(c)-3f(a)g(b)h(c)+3f(a)g(cb)h(1_{A})-3f(ba)g(c)h(1_{A})\Big),
 \end{split}
 \end{equation}
which is clearly equal to the right-hand side of \eqref{eq:quasi-Poisson-caso-1-gan}.
We will now deal with the left-hand side of \eqref{eq:quasi-Poisson-caso-1-gan}, and show it is equal to the left-hand side of \eqref{eq:SI-5-caso-1-gan}. 
To do that, first we recall the following result (see \cite{IK17}, Lemma 4.4, or \cite{FH19}, Fact 5.4, for the general case).
\begin{fact}
\label{lemma:lema-tecnico-Jacobi-statement}
Assume the same hypotheses as in Theorem \ref{theorem:Main}. 
Then,
\begin{equation}
\label{eq:lema-tecnico-Jacobi}
  (f\otimes g\otimes h)\big(\lr{a,\lr{b,c}}_L\big)=\gan\Big(m_3\big(m_3(c,th,b),tg,a\big),tf\Big),
\end{equation}
for all $a,b,c \in A$ and $f,g,h \in A^{\#}$. 
\end{fact}

We claim that the left-hand side of \eqref{eq:SI-5-caso-1-gan} coincides with the left-hand side of \eqref{eq:quasi-Poisson-caso-1-gan}, \textit{i.e.}
\begin{equation}
  \label{eq:igualdad-m3-triple-bracket}
 \begin{split}
&\gan\Big(m_3\big(m_3(a,tf,b),tg,c\big),th\Big)
 + \gan\Big(m_3\big(a,m_3(tf,b,tg),c\big),th\Big) 
 \\
 &-  \gan\Big(m_3\big(a,tf,m_3(b,tg,c)\big),th\Big)
 \\
 &=(h\otimes g\otimes f)\Big(\lr{c,\lr{b,a}}_L+ \sigma \lr{b,\lr{a,c}}_L+ \sigma^{2} \lr{a,\lr{c,b}}_L\Big),
 \end{split}
\end{equation}
where $\sigma \in \mathbb{S}_{3}$ is the unique cyclic permutation sending $1$ to $2$. 
This was already proved in \cite{IK17}, and more generally in \cite{FH19}, but we provide the proof for completeness. 
Indeed, by Fact \ref{lemma:lema-tecnico-Jacobi-statement}, the first term of the left member of \eqref{eq:igualdad-m3-triple-bracket} is 
 \begin{equation}
 \label{eq:first-term-Jacobi}
  \gan\Big(m_3\big(m_3(a,tf,b),tg,c\big),th\Big)= (h\otimes g\otimes f)\big(\lr{c,\lr{b,a}}_L\big).
 \end{equation}
On the other hand, 
\begin{equation}
\label{eq:second-term-Jacobi}
\begin{split}
&\gan\Big(m_3\big(a,m_3(tf,b,tg),c\big),th\Big) =-\gan\Big(m_3(c,th,a),m_3(tf,b,tg)\Big) 
\\
&=-\gan\big(m_3(tf,b,tg),m_3(c,th,a)\big) =\gan\Big(m_3\big(m_3(c,th,a), tf,b\big),tg\Big) 
\\
&=(g\otimes f\otimes h)\big(\lr{b,\lr{a,c}}_L\big) =(h\otimes g\otimes f)\big(\sigma\lr{b,\lr{a,c}}_L\big), 
\end{split}
\end{equation}
where we have used the cyclicity of $\gan$ in the first and third equalities, the supersymmetry of $\gan$ in the second identity, 
Fact \ref{lemma:lema-tecnico-Jacobi-statement} in the fourth equality and \eqref{eq:unipermvecfun} in the last identity. 
Similarly, using the cyclicity of $\gan$, Fact \ref{lemma:lema-tecnico-Jacobi-statement}, and \eqref{eq:unipermvecfun}, we get
\begin{equation}
\label{eq:third-term-Jacobi}
\begin{split}
-&\gan\Big(m_3\big(a,tf,m_3(b,tg,c)\big),th\Big)=\gan\Big(m_3\big(m_3(b,tg,c),th,a\big),tf\Big) 
\\
&=(f\otimes h\otimes g)\big(\lr{a,\lr{c,b}}_L\big)=(h\otimes g\otimes f)\big(\sigma^{2}\lr{a,\lr{c,b}}_L\big). 
\end{split}
\end{equation}
Hence, \eqref{eq:first-term-Jacobi}, \eqref{eq:second-term-Jacobi} and \eqref{eq:third-term-Jacobi} tell us that the left hand side of \eqref{eq:SI-5-caso-1-gan} coincides with the left hand of \eqref{eq:quasi-Poisson-caso-1-gan}, as was to be shown. 
As a consequence, we proved that $\operatorname{SI}(5)_{\gan}(\omega)$ vanishes for $\omega$ of the form \eqref{eq:omega} 
satisfying \ref{item:caso4}. 

We shall now prove that $\operatorname{SI}(5)_{\gan}(\omega) = 0$, for $\omega$ satisfying \ref{item:caso1}-\ref{item:caso3}.  
We first note that in these $3$ cases the identity 
\[     \big(m_3\circ (m_3\otimes \mathrm{id}^{\otimes 2})\big)(\omega')-\big(m_3\circ (\mathrm{id}\otimes m_3\otimes \mathrm{id})\big)(\omega')+\big(m_3\circ(\mathrm{id}^{\otimes 2}\otimes m_3)\big)(\omega')=0     \] 
holds trivially. 
Indeed, this follows from the fact that $m_{3}$ is acceptable, \textit{i.e.} $m_3=0$ unless $m_3|_{A\otimes A^{\#}[-1]\otimes A}$ (with image in $A$) or $m_3|_{A^{\#}[-1]\otimes A\otimes A^{\#}[-1]}$ (with image in $A^{\#}[-1]$). 
Hence, for any of the $3$ remaining cases, (\hyperlink{eq:ainftyalgebralink}{$\operatorname{SI}(5)$}) evaluated at $\omega'$ reduces to 
\begin{small}
\begin{equation}
 \label{eq:SI-5-simplificada}
 \begin{split}
 0&= \big(m_2\circ (m_4\otimes \mathrm{id})\big)(\omega')
 - \big(m_2\circ (\mathrm{id}\otimes m_4)\big)(\omega')
 + \big(m_4\circ (m_2\otimes \mathrm{id}^{\otimes 3})\big) (\omega')
 \\
 &- \big(m_4\circ (\mathrm{id} \otimes m_2\otimes \mathrm{id}^{\otimes 2})\big)(\omega')
 + \big(m_4 \circ (\mathrm{id}^{\otimes 2}\otimes m_2\otimes \mathrm{id})\big)(\omega')
 - \big(m_4\circ (\mathrm{id}^{\otimes 3}\otimes m_2)\big)(\omega'),
 \end{split}
\end{equation}
\end{small}
\hskip -0.8mm which we are going to verify. 
It is clear that when we evaluate \eqref{eq:SI-5-simplificada} at $\omega'$ satisfying \ref{item:caso1}, every term vanishes by the definition \eqref{eq:mun} of $m_4$. 
Thus (\hyperlink{eq:ainftyalgebragammalink}{$\operatorname{SI}(5)_{\gan}$}) is trivially satisfied in that case.

Let us prove (\hyperlink{eq:ainftyalgebragammalink}{$\operatorname{SI}(5)_{\gan}$}) for \ref{item:caso2}.
Let $\omega = a\otimes b\otimes tf\otimes c \otimes tg \otimes th \in B_{(0,0,1,0,1,1)}$, for  $a,b,c\in A$ and $f,g,h\in A^{\#}$. 
We shall prove that \eqref{eq:SI-5-simplificada} holds when we evaluate it at $\omega'$. 
Applying \eqref{eq:m4-caso-particular}, \hyperlink{eq:ainftyalgebragammalinkop}{$\operatorname{SI}(5)_{\gan}$} evaluated at $\omega$ gives 
\begin{align*}
&-\gan\big(a.m_4(b,tf,c,tg),th\big)+\gan\big(m_4(a b,tf,c,tg),th\big)-\gan\big(m_4(a,b \cdot tf,c,tg),th\big)
\\
&=-\frac{\para}{12}\big(f(c)g(1_A)h(ab)+f(b)g(c)h(a)-f(1_A)g(c)h(ab)-f(cb)g(1_A)h(a)\big)
\\
&\quad +\frac{\para}{12}\big(f(c)g(1_A)h(ab)+f(ab)g(c)h(1_A)-f(1_A)g(c)h(ab)-f(cab)g(1_A)h(1_A)\big)
\\
&\quad -\frac{\para}{12}\big(f(cb)g(1_A)h(a)+f(ab)g(c)h(1_A)-f(b)g(c)h(a)-f(cab)g(1_A)h(1_A)\big)
\\
&=0.
\end{align*}

Finally, let us prove (\hyperlink{eq:ainftyalgebragammalink}{$\operatorname{SI}(5)_{\gan}$}) for \ref{item:caso3}.
Let $\omega = a\otimes b\otimes tf\otimes tg \otimes c \otimes th$ be an element of $B_{(0,0,1,1,0,1)}$, for $a,b,c\in A$ and $f,g,h\in A^{\#}$. 
We shall prove that \eqref{eq:SI-5-simplificada} holds when we evaluate it at $\omega'$. 
Then, \hyperlink{eq:ainftyalgebragammalinkop}{$\operatorname{SI}(5)_{\gan}$} evaluated at $\omega$ gives 
\begin{align*}
-&\gan\big(a.m_4(b,tf,tg,c),th\big)+\gan\big(m_4(ab,tf,tg,c),th\big)-\gan(m_4(a,b \cdot tf,tg,c),th\big)
\\
&=-\frac{\para}{12}\big(f(1_A)g(c)h(ab)+f(b)g(1_A)h(ac)-f(b)g(c)h(a)-f(1_A)g(1_A)h(abc)\big)
\\
&+\frac{\para}{12}\big(f(1_A)g(c)h(ab)+f(ab)g(1_A)h(c)-f(ab)g(c)h(1_A)-f(1_A)g(1_A)h(abc)\big)
\\
&-\frac{\para}{12}\big(f(b)g(c)h(a)+f(ab)g(1_A)h(c)-f(ab)g(c)h(1_A)-f(b)g(1_A)h(ac)\big)
\\
&=0.
\end{align*}
The lemma is thus proved. 
\end{proof}

\subsubsection{\texorpdfstring{The Stasheff identities $\operatorname{SI}(N)$ with even parameter $N>5$}{The Stasheff identities SI(N) with even parameter N > 5}} 
\label{subsubsec:stasheffeven}

We will now proceed to prove the Stasheff identities (\hyperlink{eq:ainftyalgebragammalink}{$\operatorname{SI}(N)_{\gan}$}) for $N \geq 6$. 
The following result shows that this is the case if $N \geq 6$ is even. 
\begin{lemma}
\label{lemma:sieven}
Assume the same hypotheses as in Theorem \ref{theorem:Main}. 
We use the notation introduced in Subsubsection \ref{subsubsec:generalsetting}. 
Let $n \in \NN_{\geq 4}$ be even and let $\omega \in B_{\bar{i}}$ be an element of the form \eqref{eq:omega}, with $\bar{i} = (i_{1}, \dots, i_{n+3}) \in \{ 0 , 1 \}^{n+3}$ and $| \bar{i} | = n$. 
Then, $\operatorname{SI}(n+2)_{\gan}(\omega)$ vanishes. 
\end{lemma}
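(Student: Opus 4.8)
The plan is to prove Lemma~\ref{lemma:sieven} by checking that $\operatorname{SI}(n+2)_{\gan}(\omega)$ vanishes on each element $\omega$ of the form \eqref{eq:omega}. By the reduction made at the beginning of Subsection~\ref{subsection:definition} and by Lemma~\ref{lemma:cyc}, it is enough to handle $\omega = \omega' \otimes th_{\ell''}$ with $\ell \leq \min(\ell',\ell'')$ (and $\ell < \ell''$ if $\ell < \ell'$), where $\ell+\ell'+\ell'' = n$. Since the only nonvanishing multiplications appearing in $\operatorname{SI}(n+2)$ with the relevant arities are $m_3$, $m_2$ and $m_n$, as recorded in item~\ref{item:sieven}, the identity $\operatorname{SI}(n+2)_{\gan}(\omega)$ is a sum of three kinds of terms: those of the shape $m_n \circ (\mathrm{id}^{\otimes r} \otimes m_2 \otimes \mathrm{id}^{\otimes t})$ and $m_2\circ(m_n\otimes\mathrm{id})$, $m_2\circ(\mathrm{id}\otimes m_n)$ (the ``$m_2$--$m_n$'' part), and those of the shape $m_n \circ (\mathrm{id}^{\otimes r} \otimes m_3 \otimes \mathrm{id}^{\otimes t})$ together with $m_3\circ(\mathrm{id}^{\otimes r}\otimes m_n\otimes\mathrm{id}^{\otimes t})$ (the ``$m_3$--$m_n$'' part). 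The strategy is to show that within the ``$m_2$--$m_n$'' part almost everything cancels internally because $m_n$ is defined through $\gan\circ(m_n\otimes\mathrm{id})$ landing on the coinvariant space $B^{\circ}_{\bar e_{n-1}}$ via the cyclic expressions \eqref{eq:mu2}, so inserting $m_2$ (concatenation of two $A^{\#}[-1]$'s with an $A$, or multiplication of two $A$'s) corresponds to a telescoping of the four terms in \eqref{eq:mu2}; what survives is matched against the ``$m_3$--$m_n$'' part.

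Concretely, first I would rewrite $\operatorname{SI}(n+2)_{\gan}(\omega)$ using \eqref{eq:mun}, i.e. translate every term in which $m_n$ appears outermost (or second-outermost, after an $m_2$ or $m_3$ that feeds into the final slot) into a value of $\operatorname{ev}_{n-1}\circ \mathscr{M}_{n-1}\circ\pi_{\bar i}$, and then expand $\mathscr{M}_{n-1}$ via the three cases $i=0$, $i=1$, $i\geq 2$ of \eqref{eq:mu2}. The point of passing to $\operatorname{SI}(N)_{\gan}$ rather than $\operatorname{SI}(N)$ is exactly that all these become $\kk$-valued and the cyclic symmetry of $\pi_{\bar i}$ is available. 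Because $n$ is even, $n-1$ is odd, so there are no Koszul signs in the $C_{n-1}$-action, which keeps the bookkeeping clean. I expect the combinatorial heart to be: (a) the ``$m_2$--$m_n$'' terms, when written out, consist of the four monomials of \eqref{eq:mu2} with one of the letters $tf_i\cdot a$, $tg_j\cdot b$, $b\cdot tf_1$, $a\cdot tg_1$, or a product of two consecutive $A$-entries, replaced according to where $m_2$ was inserted; summing over all insertion positions $r$ telescopes most of them, leaving a residue that is precisely (up to sign) the ``$m_3$--$m_n$'' contribution, where $m_3$ contracts one dual with one algebra element via \eqref{eq:m3IK}, i.e. via the double bracket $\lr{\hskip 0.6mm,}$. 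Here the hypothesis that $A$ is merely a double bracket (not yet double quasi-Poisson) is what is used — only the Leibniz rule \ref{item:double2} and cyclic skew-symmetry \ref{item:double1} enter, since for $N=n+2$ with $n$ even no ``triple'' of $m_3$'s occurs and the modified Jacobi identity \eqref{eq:double-quasi-poisson-def} is not needed (that is reserved for the odd Stasheff identities handled in Proposition~\ref{proposition:siodd} and in Lemma~\ref{lemma:si5}).

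The step I expect to be the main obstacle is the exact sign and index bookkeeping in matching the telescoped residue of the ``$m_2$--$m_n$'' block against the ``$m_3$--$m_n$'' block, in particular keeping track of which of the four monomials of \eqref{eq:mu2} survives in each of the boundary insertion positions $r=0$, $r=2i-1$ (or the analogue for the $g$-block), and correctly handling the special cases $i=0$, $i=1$ where \eqref{eq:mu2} degenerates (the $i=0$ case contributes nothing, which forces a separate argument that the corresponding boundary terms in the $m_2$-insertions cancel among themselves). A secondary subtlety is that the element $\omega$ has its three algebra entries $a,b,c$ in possibly very asymmetric positions relative to $\ell,\ell',\ell''$, so one must verify that the pattern of cancellation is insensitive to which of the blocks of $th$'s, $tf$'s, $tg$'s the ``active'' $m_3$ (or $m_2$) sits in; invoking Lemma~\ref{lemma:cyc} once more lets us reduce to, say, the active contraction occurring in the last block, but one still has to treat separately the sub-cases $\ell=0$ (two algebra entries adjacent, as in cases \ref{item:caso2} and \ref{item:caso3} of Lemma~\ref{lemma:si5} but now of general length) and $\ell\geq 1$. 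Once these sub-cases are organized, each reduces — after applying \eqref{eq:m3IK}, \eqref{eq:natform}, the bimodule structure on $A^{\#}[-1]$, and the Leibniz rule — to an identity of the same shape as \eqref{eq:SI-5-simplificada}, which holds term by term. I would present the proof by first disposing of the purely ``$m_2$--$m_n$'' cancellation as a standalone computation, then reducing to the single surviving configuration and invoking the double-bracket axioms to close it.
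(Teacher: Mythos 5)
There is a genuine gap: your decomposition of $\operatorname{SI}(n+2)_{\gan}$ misidentifies which terms actually occur. In $\operatorname{SI}(N)$ with $N=n+2$, a summand $m_{r+1+t}\circ(\mathrm{id}^{\otimes r}\otimes m_{s}\otimes\mathrm{id}^{\otimes t})$ has outer arity $n+3-s$; since the only nonzero multiplications are $m_{2}$, $m_{3}$ and the $m_{2k}$ with $2k\geq 4$, an inner $m_{2}$ would force the outer multiplication to be $m_{n+1}$, which is odd of arity at least $5$ and hence zero, while the terms $m_{2}\circ(m_{n}\otimes\mathrm{id})$, $m_{2}\circ(\mathrm{id}\otimes m_{n})$ and $m_{n}\circ(\mathrm{id}^{\otimes r}\otimes m_{2}\otimes\mathrm{id}^{\otimes t})$ you list do not even have total arity $n+2$: they are the terms of the \emph{odd} identity $\operatorname{SI}(n+1)_{\gan}$, which is treated separately in Proposition \ref{proposition:siodd}. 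This is exactly what item \ref{item:sieven} records: $\operatorname{SI}(n+2)_{\gan}$ involves only $m_{3}$ and $m_{n}$. Consequently the central mechanism of your argument --- a telescoping cancellation inside the ``$m_{2}$--$m_{n}$'' block whose residue is matched against the ``$m_{3}$--$m_{n}$'' block --- has no terms to act on, and the cancellation that actually has to be established, namely among the genuine terms $m_{n}\circ(\mathrm{id}^{\otimes r}\otimes m_{3}\otimes\mathrm{id}^{\otimes t})$ and $m_{3}\circ(\mathrm{id}^{\otimes r}\otimes m_{n}\otimes\mathrm{id}^{\otimes t})$ (with $r+t=2$ in the latter), is not addressed; the reduction you invoke ``to an identity of the shape \eqref{eq:SI-5-simplificada}'' again concerns the $m_{2}$--$m_{4}$ terms of the odd identity $\operatorname{SI}(5)$, not the even one at hand.

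What the proof of this lemma actually requires is a case analysis on the positions $(\ell,\ell',\ell'')$ of the three algebra entries $a,b,c$ (with $\ell+\ell'+\ell''=n$): in most configurations every term vanishes because $m_{3}$ is good, $m_{n}$ is acceptable and $1_{A}$-normalized, and several would-be contributions die by Fact \ref{fact:cyc2} since the inner $m_{3}$ (or $m_{n}$) would have to be paired against $1_{A}$; in the two surviving configurations ($\ell=1$ with $\ell',\ell''\geq 2$, and $\ell=\ell'=1$) the remaining summands cancel pairwise using the cyclicity property \eqref{eq:ip1}, and in the second configuration the last three summands assemble into $\operatorname{SI}(4)_{\gan}(b,tg_{1},c,a,tf_{1})$, i.e.\ the Leibniz rule already established in Lemma \ref{lemma:si4}. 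Your broader intuition that only the double-bracket axioms (and not the quasi-Jacobi identity \eqref{eq:double-quasi-poisson-def}) enter the even Stasheff identities is correct, but the route you propose does not produce these cancellations.
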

\begin{proof}
As noted in item \ref{item:sieven}, $\operatorname{SI}(n+2)_{\gan}(\omega)$ only involves $m_{3}$ and $m_{n}$. 
The following cases exhaust all the possibilities. 
\begin{enumerate}[label=(\alph*)]
\item\label{item:si2n4-a} Suppose that $\ell > 1$, so $\ell', \ell'' > 1$. 
The fact that $m_{3}$ is acceptable implies that the terms $\gan \circ ((m_{n} \circ (\mathrm{id}^{\otimes r} \otimes m_{3} \otimes \mathrm{id}^{\otimes (n-r-1)})) \otimes \mathrm{id})(\omega)$ in $\operatorname{SI}(n+2)_{\gan}(\omega)$ vanish, with the possible exception of $r = \ell$ and $r = \ell + \ell' + 1$. 
However, the expression of $m_{n}$ tells us that the corresponding arguments $m_{3}(tf_{\ell}, b, tg_{1})$ and $m_{3}(tg_{\ell'}, c, th_{1})$ have to be then evaluated at $1_{A}$, so they vanish by Fact \ref{fact:cyc2}. 
The definition of $m_{n}$ and Fact \ref{fact:cyc2} imply that $\gan \circ ((m_{3} \circ (\mathrm{id}^{\otimes 2} \otimes m_{n})) \otimes \mathrm{id})(\omega)$ vanishes, since in that case $m_{n}(tf_{2},\dots, tf_{\ell}, b, tg_{1}, \dots, tg_{\ell'}, c, th_{1}, \dots, th_{\ell''-1})$ is a scalar multiple of $1_{A}$. 
Moreover, using that $(\mathrm{id} \otimes m_{n} \otimes \mathrm{id})(\omega') \in A$ and the fact that $m_{3}$ is acceptable, 
$\gan \circ ((m_{3} \circ (\mathrm{id} \otimes m_{n} \otimes \mathrm{id})) \otimes \mathrm{id})(\omega) = 0$. 
The definition of $m_{n}$ also shows that $\gan \circ ((m_{3} \circ (m_{n} \otimes \mathrm{id}^{\otimes 2})) \otimes \mathrm{id})(\omega)$ vanishes if $\ell''>2$, since in that case $(m_{n} \otimes \mathrm{id}^{\otimes 2})(\omega') = 0$. 
If $\ell'' = 2$, then $\ell = \ell' = 2$ as well and $n = 6$, so $(m_{n} \otimes \mathrm{id}^{\otimes 2})(\omega') \in A$, which implies that 
$\gan \circ ((m_{3} \circ (m_{n} \otimes \mathrm{id}^{\otimes 2})) \otimes \mathrm{id})(\omega)$ vanishes, since $m_{3}$ is acceptable. 

\item\label{item:si2n4-b} Suppose that $\ell = \ell' = 0$. 
The fact that $m_{3}$ is acceptable implies that all the terms in $\operatorname{SI}(n+2)_{\gan}(\omega)$ of the form $\gan \circ ((m_{n} \circ (\mathrm{id}^{\otimes r} \otimes m_{3} \otimes \mathrm{id}^{\otimes (n-r-1)})) \otimes \mathrm{id})(\omega)$ vanish. 
Moreover, we have that $\gan \circ ((m_{3} \circ (m_{n} \otimes \mathrm{id}^{\otimes 2})) \otimes \mathrm{id})(\omega) = 0$ and $\gan \circ ((m_{3} \circ (\mathrm{id} \otimes m_{n} \otimes \mathrm{id})) \otimes \mathrm{id})(\omega) = 0$, 
since $m_{n}$ is acceptable and its first two arguments are elements of $A$. 
Finally, $m_{3}$ being acceptable also implies that $\gan \circ ((m_{3} \circ (\mathrm{id}^{\otimes 2} \otimes m_{n})) \otimes \mathrm{id})(\omega) = 0$. 

\item\label{item:si2n4-c} Suppose that $\ell = 0$ and $\ell' = 1$, so $\ell'' = n-1 \geq 3$. 
The fact that $m_{3}$ is acceptable implies that the terms of the form $\gan \circ ((m_{n} \circ (\mathrm{id}^{\otimes r} \otimes m_{3} \otimes \mathrm{id}^{\otimes (n-r-1)})) \otimes \mathrm{id})(\omega)$ in $\operatorname{SI}(n+2)_{\gan}(\omega)$ vanish, with the possible exception of $r = \ell+1$ as well as $r = \ell' + 1$. 
However, since in these last two cases the first two arguments of $m_{n}$ in $\gan \circ ((m_{n} \circ (\mathrm{id}^{\otimes r} \otimes m_{3} \otimes \mathrm{id}^{\otimes (n-r-1)})) \otimes \mathrm{id})(\omega)$ are elements of $A$, the latter vanishes, for $m_{n}$ is acceptable. 
The same reason tells us that the term $\gan \circ ((m_{3} \circ (m_{n} \otimes \mathrm{id}^{\otimes 2})) \otimes \mathrm{id})(\omega)$ vanishes, and $\gan \circ ((m_{3} \circ (\mathrm{id}^{\otimes 2} \otimes m_{n})) \otimes \mathrm{id})(\omega)$ also vanishes because $m_{3}$ is acceptable. 
Finally, $m_{3}$ being acceptable also implies that $\gan \circ ((m_{3} \circ (\mathrm{id} \otimes m_{n} \otimes \mathrm{id})) \otimes \mathrm{id})(\omega) = 0$, since the definition of $m_{n}$ tells us that the first two tensor-factors of $(\mathrm{id} \otimes m_{n} \otimes \mathrm{id})(\omega')$ are elements of $A$. 

\item \label{item:si2n4-d} Suppose that $\ell = 0$ and $\ell' \geq 2$. 
The fact that $m_{3}$ is acceptable implies that the terms in $\operatorname{SI}(n+2)_{\gan}(\omega)$ of the form $\gan \circ ((m_{n} \circ (\mathrm{id}^{\otimes r} \otimes m_{3} \otimes \mathrm{id}^{\otimes (n-r-1)})) \otimes \mathrm{id})(\omega)$ vanish with the possible exception of $r = \ell'+1$ if $\ell''> 1$. 
However, the expression of $m_{n}$ tells us that $\gan \circ ((m_{n} \circ (\mathrm{id}^{\otimes (\ell'+1)} \otimes m_{3} \otimes \mathrm{id}^{\otimes (n-\ell'-2)})) \otimes \mathrm{id})(\omega)$ vanishes, since $\ell'>1$, so the first two arguments of $m_{n}$ are elements of $A$, and $m_{n}$ is acceptable. 
Since $m_{3}$ is acceptable, $\gan \circ ((m_{3} \circ (\mathrm{id}^{\otimes 2} \otimes m_{n})) \otimes \mathrm{id})(\omega)$ vanishes, whereas $\gan \circ ((m_{3} \circ (m_{n} \otimes \mathrm{id}^{\otimes 2})) \otimes \mathrm{id})(\omega) = 0$ follows from the definition of $m_{n}$ because $m_{n}$ is acceptable. 
Finally, $m_{3}$ being acceptable also implies that $\gan \circ ((m_{3} \circ (\mathrm{id} \otimes m_{n} \otimes \mathrm{id})) \otimes \mathrm{id})(\omega) = 0$ if $\ell''>1$, whereas the fact that $m_{n}$ is acceptable tells us that $(\mathrm{id} \otimes m_{n} \otimes \mathrm{id})(\omega')=0$ if $\ell'' = 1$, so $\gan \circ ((m_{3} \circ (\mathrm{id} \otimes m_{n} \otimes \mathrm{id})) \otimes \mathrm{id})(\omega)$ also vanishes. 

\item \label{item:si2n4-e} Suppose that $\ell = 1$ and $\ell', \ell'' \geq 2$. 
The fact that $m_{3}$ is acceptable implies that the terms $\gan \circ ((m_{n} \circ (\mathrm{id}^{\otimes r} \otimes m_{3} \otimes \mathrm{id}^{\otimes (n-r-1)})) \otimes \mathrm{id})(\omega)$ in $\operatorname{SI}(n+2)_{\gan}(\omega)$ vanish, with the possible exception of $r = 0, 1, \ell'+2$. 
However, using the expression of $m_{n}$, we see that its argument $m_{3}(tg_{\ell'},c, th_{1})$ appearing in the case $r = \ell'+2$ has then to be evaluated at $1_{A}$, which in turn implies that $\gan \circ ((m_{n} \circ (\mathrm{id}^{\otimes (\ell'+2)} \otimes m_{3} \otimes \mathrm{id}^{\otimes (n-\ell'-3)})) \otimes \mathrm{id})(\omega)$ vanishes, by Fact \ref{fact:cyc2}. 
Note that $m_{3}$ being acceptable implies that $\gan \circ ((m_{3} \circ (m_{n} \otimes \mathrm{id}^{\otimes 2})) \otimes \mathrm{id})(\omega)$ and $\gan \circ ((m_{3} \circ (\mathrm{id} \otimes m_{n} \otimes \mathrm{id})) \otimes \mathrm{id})(\omega)$ vanish. 
Then, $\operatorname{SI}(n+2)_{\gan}(\omega)$ reduces to 
\begin{equation*}
\begin{split}
-&\gan\Big(m_{n}\big(m_{3}(a,tf_{1},b),tg_{1},\dots,tg_{\ell'},c, th_{1}, \dots, th_{\ell''-1}\big), th_{\ell''}\Big) 
\\
- &\gan\Big(m_{n}\big(a,m_{3}(tf_{1},b,tg_{1}),tg_{2},\dots,tg_{\ell'},c, th_{1}, \dots, th_{\ell''-1}\big), th_{\ell''}\Big) 
\\
+ &\gan\Big(m_{3}\big(a,tf_{1},m_{n}(b,tg_{1},\dots,tg_{\ell'},c, th_{1}, \dots, th_{\ell''-1})\big), th_{\ell''}\Big)
\\
&= \alpha \Big[ \gan\big( m_{3}(a,tf_{1},b),tg_{1}\big) h_{\ell''}(1_{A}) \big( g_{\ell'}(c) h_{1}(1_{A}) - g_{\ell'}(1_{A}) h_{1}(c)\big) 
\\
&\phantom{= \alpha} - \gan\big(m_{3}(a,tf_{1},b),th_{\ell''} \big) g_{1}(1_{A}) \big( g_{\ell'}(c) h_{1}(1_{A}) - g_{\ell'}(1_{A}) h_{1}(c)\big) 
\\
&\phantom{= \alpha} + \gan\big(m_{3}(tf_{1},b,tg_{1}),a\big) h_{\ell''}(1_{A}) \big( g_{\ell'}(c) h_{1}(1_{A}) - g_{\ell'}(1_{A}) h_{1}(c)\big) 
\\
&\phantom{= \alpha} +  \gan\big(m_{3}(a,tf_{1},b),th_{\ell''}\big) g_{1}(1_{A}) \big( g_{\ell'}(c) h_{1}(1_{A}) - g_{\ell'}(1_{A}) h_{1}(c)\big) \Big],
\end{split}
\end{equation*}          
where we have used that the map $m_{3}$ is normalized (with respect to $1_{A}$), and 
$\alpha = C_{\ell',\ell''} \prod_{i = 2}^{\ell'-1} g_{i}(1_{A}) \prod_{i = 2}^{\ell''-1} h_{i}(1_{A})$. 
The second and fourth terms trivially cancel each other, and the cyclicity of $m_{3}$ tells us that the first and third terms also cancel, 
which implies that $\operatorname{SI}(n+2)_{\gan}(\omega)$ vanishes.

\item \label{item:si2n4-f} Suppose that $\ell = \ell' = 1$, so $\ell'' = n-2 \geq 2$. 
The fact that $m_{3}$ is acceptable implies that the terms of the form $\gan \circ ((m_{n} \circ (\mathrm{id}^{\otimes r} \otimes m_{3} \otimes \mathrm{id}^{\otimes (n-r-1)})) \otimes \mathrm{id})(\omega)$ in $\operatorname{SI}(n+2)_{\gan}(\omega)$ vanish, with the possible exception of $r = 0, 1, 2, 3$. 
Note that $m_{3}$ being acceptable implies that $\gan \circ ((m_{3} \circ (m_{n} \otimes \mathrm{id}^{\otimes 2})) \otimes \mathrm{id})(\omega)$ and $\gan \circ ((m_{3} \circ (\mathrm{id} \otimes m_{n} \otimes \mathrm{id})) \otimes \mathrm{id})(\omega)$ vanish. 
Then, $\operatorname{SI}(n+2)_{\gan}(\omega)$ reduces to 
\begin{equation}
\label{eq:terms}
\begin{split}
-&\gan\Big(m_{n}\big(m_{3}(a,tf_{1},b),tg_{1},c, h_{1}, \dots, th_{\ell''-1}\big), th_{\ell''}\Big) 
\\
- &\gan\Big(m_{n}\big(a,m_{3}(tf_{1},b,tg_{1}),c, th_{1}, \dots, th_{\ell''-1}\big), th_{\ell''}\Big) 
\\
+ &\gan\Big(m_{n}\big(a,tf_{1},m_{3}(b,tg_{1},c), th_{1},\dots, th_{\ell''-1}\big), th_{\ell''}\Big) 
\\
+ &\gan\Big(m_{n}\big(a,tf_{1},b,m_{3}(tg_{1},c, th_{1}), th_{2},\dots, th_{\ell''-1}\big), th_{\ell''}\Big) 
\\
+ &\gan\Big(m_{3}\big(a,tf_{1},m_{n}(b,tg_{1},c, th_{1}, \dots, th_{\ell''-1})\big), th_{\ell''}\Big).
\end{split}
\end{equation}  
Applying the definition \eqref{eq:mun} of $m_{n}$ to each term of the previous expression and using the fact that $m_{3}$ and $m_{n}$ are normalized, we see that the first and third terms in the previous equation are each one equal to the sum of four nonzero summands, whereas the second term is the sum of three nonzero summands, and the fourth and fifth terms are each one the sum of only two nonzero summands. 
Moreover, two of the four summands coming from the first term of \eqref{eq:terms} directly cancel the two summands of the fifth term of \eqref{eq:terms}. 
The cyclicity property \eqref{eq:ip1} tells us that one remaining summand of the first term of \eqref{eq:terms} cancels one of the three summands of the second term of \eqref{eq:terms}, that a second summand of the second term of \eqref{eq:terms} cancels one of the four summands of the third term of \eqref{eq:terms}, and that two other summands of the third term of \eqref{eq:terms} cancel the two summands of the fourth term of \eqref{eq:terms}. 
The three remaining summands (each one coming from the first three terms of \eqref{eq:terms}) are 
\begin{equation}
\label{eq:terms2}
     \alpha \Big[ \gan\big(c.m_{3}(a,tf_{1},b) , tg_{1}\big) + \gan\big(m_{3}(tf_{1},b,tg_{1}),c a \big) - \gan\big(m_{3}(b,tg_{1},c).a, tf_{1}\big) \Big],    
\end{equation}  
where $\alpha = C_{1,\ell''} \prod_{q = 1}^{\ell''} h_{q}(1_{A})$. 
Using the cyclicity property of $m_{3}$, \eqref{eq:terms2} becomes 
\begin{align*}
      &\alpha \Big[ - \gan\big(m_{3}(b,tg_{1} \cdot c,a), tf_{1}\big) + \gan\big(m_{3}(b,tg_{1},c a),tf_{1}\big) - \gan\big(m_{3}(b,tg_{1},c).a, t f_{1}\big) \Big] 
      \\
      &= \alpha \operatorname{SI}(4)_{\gan}(b,tg_{1},c,a,tf_{1}) = 0,     
\end{align*}
where we have used Lemma \ref{lemma:si4}.  
\end{enumerate}
\end{proof}

\subsubsection{\texorpdfstring{The Stasheff identities $\operatorname{SI}(N)$ with odd parameter $N>5$}{The Stasheff identities SI(N) with odd parameter N > 5}} 
\label{subsubsec:stasheffodd}

The aim of this final subsubsection is to prove the next result.
\begin{proposition}
\label{proposition:siodd}
Assume the same hypotheses as in Theorem \ref{theorem:Main}. 
We use the notation introduced in Subsubsection \ref{subsubsec:generalsetting}. 
Let $n \geq 6$ be an even integer and let $\omega \in B_{\bar{i}}$ be an element of the form \eqref{eq:omega}, with $\bar{i} = (i_{1}, \dots, i_{n+2}) \in \{ 0 , 1 \}^{n+2}$ and $| \bar{i} | = n-1$. 
Then, $\operatorname{SI}(n+1)_{\gan}(\omega)$ vanishes. 
\end{proposition}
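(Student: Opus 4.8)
The plan is to reduce the general odd-parameter Stasheff identity $\operatorname{SI}(n+1)_{\gan}(\omega)$, for $n\geq 6$ even, to the case $\para=1$ via Remark \ref{remark:tau1}, and then to organize the computation around the combinatorial structure of the element $\Omega$ in \eqref{eq:Omega}, distinguishing the cases according to the triple $(\ell,\ell',\ell'')$ with $\ell+\ell'+\ell''=n-1$ and $\ell\leq\min(\ell',\ell'')$ (and $\ell<\ell''$ if $\ell<\ell'$). The identity $\operatorname{SI}(n+1)_{\gan}$ only involves the even multiplications $m_{2i}$ (and $m_3$ when $n=4$, which is excluded here since $n\geq 6$), so every term is of the form $\gan\circ((m_{2i}\circ(\mathrm{id}^{\otimes r}\otimes m_{n+2-2i}\otimes\mathrm{id}^{\otimes t}))\otimes\mathrm{id})(\omega)$. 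Using that each $m_{2i}$ for $2i\geq 4$ is acceptable and normalized with respect to $1_A$, and that its defining formula \eqref{eq:mun}–\eqref{eq:mu2} forces all but two of the tensor slots among the $A^{\#}[-1]$-entries to be evaluated at $1_A$ (hence, by Fact \ref{fact:cyc2}, kills any inner $m_{2j}$ feeding into such a slot), one eliminates the vast majority of terms exactly as in the proof of Lemma \ref{lemma:sieven}. I expect this bookkeeping to leave, in each case of $(\ell,\ell',\ell'')$, only a small number of surviving terms which must be shown to cancel.

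The key structural observation I would exploit is that, by definition \eqref{eq:mun}, the composite $\operatorname{ev}_{n-1}\circ\mathscr{M}_{n-1}$ depends on its $A^{\#}[-1]$-arguments only through their values at $1_A$ except for at most two of them (the ones adjacent to the two $A$-entries, in the notation of \eqref{eq:mu2}), and on its $A$-arguments $a,b$ only through the cyclic words $\langle\,\cdot\,\rangle$ built from $tf_1\cdot a$, $a\cdot tg_1$, $b\cdot tf_1$, $tg_j\cdot b$ and $b\cdot tf_1\cdot a$. Thus an inner multiplication $m_{2j}$ inserted into $m_{2i}$ either (i) lands in a slot that gets evaluated at $1_A$, forcing the whole term to vanish by Fact \ref{fact:cyc2}; or (ii) lands in one of the two ``active'' slots, in which case its output must itself be a scalar multiple of $1_A$ (again by the formula for $m_{2j}$ and Fact \ref{fact:cyc2} applied one level down), so the term reduces to a product of a coefficient $C_{i,j}C_{i',j'}$, several factors $f_p(1_A)$, and a single ``core'' expression in $a,b,c$ (and the one or two genuinely active functionals). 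After this reduction, $\operatorname{SI}(n+1)_{\gan}(\omega)$ collapses to a $\kk$-linear combination, indexed by the ways of splitting the word $\Omega$, of a fixed finite set of core monomials, each weighted by a sum of products of Bernoulli-type constants. The remaining task is to check that each such weight vanishes.

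The main obstacle — and the heart of the argument — is precisely this last numerical cancellation: showing that the sums of products $C_{i,j}C_{i',j'}$ arising from the two ways an outer/inner pair of even multiplications can be glued (a ``left'' insertion into the $f$-block versus a ``right'' insertion into the $g$-block, and the interaction with the $b\cdot tf_1\cdot a$ term) add up to zero. Concretely, from \eqref{eq:newcij} one has $C_{i,j}=\binom{i+j-2}{i-1}(-1)^{1+(i+j-1)/2}\tfrac{\mathcal B_{i+j-1}}{(i+j-1)!}\para^{(i+j-1)/2}$, so the coefficient identities to be verified are quadratic relations among the numbers $\mathcal B_{2\ell-1}/(2\ell-1)!$ weighted by binomial coefficients; these are exactly the relations that make $m_\bullet$ a Maurer–Cartan element, and they should follow by a Pascal-type manipulation using \eqref{eq:binid}, \eqref{eq:binpas} and \eqref{eq:binpasgen} together with the classical recursion $\sum_{k}\binom{2\ell}{2k}\mathcal B_{2k}=0$ for the Bernoulli numbers. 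I would isolate this as a self-contained numerical lemma, prove it by induction on $n$ using the binomial identities recalled in Section \ref{section:preliminaries}, and then feed it into the case analysis.

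Finally, I would handle the low cases of $(\ell,\ell',\ell'')$ where the generic reduction degenerates — in particular $\ell=\ell'=0$, $\ell=0$ with $\ell'\in\{1,2,\dots\}$, and $\ell=1$ — separately, exactly as the corresponding items in the proof of Lemma \ref{lemma:sieven}, since there the ``two active slots'' can coincide with the outer $A$-entries $a$ and $c$ and one must instead invoke acceptability of the outer $m_{2i}$ directly, or use the cyclicity property \eqref{eq:ip1} (equivalently Lemma \ref{lemma:cyc}) to pair up surviving summands. Once all cases of $(\ell,\ell',\ell'')$ are dispatched, $\operatorname{SI}(n+1)_{\gan}(\omega)$ vanishes for the chosen $\omega$ of the form \eqref{eq:omega}, and by Lemma \ref{lemma:cyc} this suffices to conclude $\operatorname{SI}(n+1)_{\gan}|_{B_{\bar i}}=0$ for all $\bar i$ with $|\bar i|=n-1$, hence for all $\bar i$ by the degree argument, which is what the proposition asserts.
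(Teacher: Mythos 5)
Your overall architecture matches the paper's: reduce to $\para=1$ by Remark \ref{remark:tau1}, fix the normal form \eqref{eq:omega} indexed by $(\ell,\ell',\ell'')$, kill most terms of $\operatorname{SI}(n+1)_{\gan}(\omega)$ using acceptability, $1_A$-normalization and Fact \ref{fact:cyc2}, and reduce what survives to a finite list of ``core'' expressions weighted by sums of products $C_{i,j}C_{i',j'}$. This is exactly the skeleton of Lemmas \ref{lemma:sioddl0} and \ref{lemma:sioddl1} and of Facts \ref{fact:sif}--\ref{fact:cgen}. However, there is a genuine gap at what you yourself call the heart of the argument: the quadratic coefficient identities. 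After the reduction, the vanishing of $\operatorname{SI}(n+1)_{\gan}(\omega)$ is equivalent to the family of identities \eqref{eq:Cgen} (equivalently \eqref{eq:Codd} and \eqref{eq:Ceven}), which are quadratic convolution identities among the numbers $\mathcal{B}_{2i}/(2i)!$ weighted by binomial coefficients. Your claim that these ``should follow by a Pascal-type manipulation using \eqref{eq:binid}, \eqref{eq:binpas} and \eqref{eq:binpasgen} together with the classical recursion $\sum_{k}\binom{2\ell}{2k}\mathcal{B}_{2k}=0$,'' proved by induction on $n$, is not substantiated and is very unlikely to work as stated: even the simplest identity of this shape (Euler's $\sum_i\binom{2k}{2i}\mathcal{B}_{2i}\mathcal{B}_{2k-2i}=-(2k+1)\mathcal{B}_{2k}$, which is the case $\ell=\ell'=1$ of \eqref{eq:C1}) is not a formal consequence of the linear recursion plus Pascal identities; it requires a convolution/generating-function argument or an external structural input. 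The paper supplies exactly this input: it shows (after the binomial manipulations \eqref{eq:bcvmc0}--\eqref{eq:maincomp2} and the rewriting \eqref{eq:Cgenex}) that $\mathcalboondox{Eq}(\ell,\ell',\ell'')$ is the explicit linear combination \eqref{eq:ide} of two of the nontrivial Bernoulli identities $\operatorname{Eq}(a,b,c)$ of \cite{BCM17}, Thm.\ 2.4, which themselves come from the Lawrence--Sullivan dg Lie algebra. Saying that the needed relations are ``exactly the relations that make $m_\bullet$ a Maurer--Cartan element'' is circular here: that is precisely what has to be established.

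A secondary, more minor, inaccuracy: in your case (ii) you assert that when an inner even multiplication lands in an ``active'' slot its output must be a scalar multiple of $1_A$. This is false in general (compare \eqref{eq:m4-caso-particular}, whose output mixes $1_A$ with $c$); in the paper the inner output is a genuine element of $A$ that gets evaluated against functionals, and the collapse to core monomials only happens after applying \eqref{eq:mun} a second time and exploiting cancellations \emph{between} the blocks $\operatorname{SI}(n+1)^{2,n}_{\gan}$, $\operatorname{SI}(n+1)^{f}_{\gan}$, $\operatorname{SI}(n+1)^{m}_{\gan}$ and $\operatorname{SI}(n+1)^{l}_{\gan}$ (see the boundary terms \eqref{eq:gammajbisbis}, \eqref{eq:tildesigmampre} and the parity-dependent extra terms in Facts \ref{fact:sif}--\ref{fact:sim}). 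So the bookkeeping is heavier than your sketch suggests, though repairable; the missing proof of the quadratic Bernoulli identities is the essential defect.
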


Since the handling of the Stasheff identities \hypertarget{eq:ainftyalgebragammalink}{$\operatorname{SI}(N)_{\gan}$} with $N \geq 6$ odd is quite involved, 
we will further separate into two cases, Lemmas \ref{lemma:sioddl0} and \ref{lemma:sioddl1}. 
Moreover, as explained in Remark \ref{remark:tau1}, it suffices to deal with the case $\para=1$.

\paragraph{\texorpdfstring{The Stasheff identities $\operatorname{SI}(N)$ with odd parameter $N>5$ and $\ell = 0$.}{The Stasheff identities SI(N) with odd parameter N > 5 and l = 0.}} 
\label{paragraph:stasheffodd>5ell=0}

\begin{lemma}
\label{lemma:sioddl0}
Assume the same hypotheses as in Theorem \ref{theorem:Main}. 
We use the notation introduced in Subsubsection \ref{subsubsec:generalsetting}. 
Let $n \geq 6$ be an even integer and let $\omega \in B_{\bar{i}}$ be an element of the form \eqref{eq:omega}, with $\bar{i} = (i_{1}, \dots, i_{n+2}) \in \{ 0 , 1 \}^{n+2}$ and $| \bar{i} | = n-1$. 
Suppose further that $\ell = 0$. 
Then, $\operatorname{SI}(n+1)_{\gan}(\omega)$ vanishes. 
\end{lemma}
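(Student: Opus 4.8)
The plan is to follow the same overall strategy that worked for \hyperlink{eq:ainftyalgebragammalink}{$\operatorname{SI}(n+2)_{\gan}$} in Lemma~\ref{lemma:sieven}, but now with all the even multiplications $m_{2i}$ present rather than just $m_{n}$. As explained in Remark~\ref{remark:tau1} it suffices to treat $\para = 1$, so all the relevant coefficients are the $C_{i,j}$ of \eqref{eq:newcij}. By the reduction at the end of Subsubsection~\ref{subsubsec:generalsetting} we may assume $\omega$ has the normal form \eqref{eq:omega}, and here we are imposing $\ell = 0$; thus $\omega' = [a,b,tg_{1},\dots,tg_{\ell'},c,th_{1},\dots,th_{\ell''-1}]$ with $\ell' + \ell'' = n-1$ and (by the normal-form conditions) $\ell' \leq \ell''$. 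First I would split into the subcases according to the size of $\ell'$: the degenerate cases $\ell' = 0$ and $\ell' = 1$, where most compositions are killed because $m_{k}$ is acceptable and its first two arguments land in $A = B_{0}$, and the generic case $\ell' \geq 2$, which carries the real content.

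The first step in each subcase is the \emph{acceptability bookkeeping}: go term by term through \hyperlink{eq:ainftyalgebragammalink}{$\operatorname{SI}(n+1)_{\gan}(\omega)$}, which is a sum of terms $\gan\circ\big((m_{p}\circ(\mathrm{id}^{\otimes r}\otimes m_{q}\otimes\mathrm{id}^{\otimes t}))\otimes\mathrm{id}\big)(\omega)$ with $p,q$ even and $p+q = n+1$, and discard those that vanish for structural reasons — namely when $m_{q}$ is applied to two consecutive $A$-factors, or when the outer $m_{p}$ receives two consecutive $A$-factors, or (crucially, as in case \ref{item:si2n4-a} of Lemma~\ref{lemma:sieven}) when the cyclic shape of $m_{q}$ forces one of the distinguished $A$-inputs $b$ or $c$ to be evaluated against a unit coming out of an inner $m_{q}$, so that Fact~\ref{fact:cyc2} applies. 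After this pruning only a short list of ``surviving'' compositions remains; these are the ones in which the inner $m_{q}$ acts on a block straddling exactly one of $b$ or $c$, and the outer $m_{p}$ then acts on the remaining block straddling the other. The second step is to write each surviving term out explicitly using the four-line cyclic formula \eqref{eq:mu2} (in the $i\geq 2$, $i=1$ or $i=0$ case as appropriate), expand everything in terms of the pairings $g_{k}(1_{A})$, $h_{k}(1_{A})$, $f_{k}(\,\cdot\,)$, and collect the surviving summands by the ``shape'' of the $\langle\dots\rangle$-monomial they produce.

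The third and decisive step is the \emph{coefficient cancellation}. After collecting, the total $\operatorname{SI}(n+1)_{\gan}(\omega)$ becomes a sum indexed by the position $m$ at which the inner bracket $m_{q}$ is inserted (equivalently, by the split of the $h$-variables into those inside and those outside the inner bracket), and each monomial carries a coefficient which is a product of two $C$'s — typically $C_{i,j}\,C_{i',j'}$ with $i+j$, $i'+j'$ odd and $(i+j)+(i'+j') = n$. I expect these to telescope: consecutive insertion positions produce the same $\langle\dots\rangle$-monomial with opposite signs, and the identity one needs is a quadratic relation among the $C_{i,j}$ of the form $\sum C_{i,j}C_{i',j'}=0$ over the relevant range, which is exactly the kind of relation \eqref{eq:newcij} was reverse-engineered from \eqref{eq:Cgen} to satisfy (and which ultimately reflects a Bernoulli-number identity, e.g. $\sum_{k}\binom{n}{k}\mathcal{B}_{k}\mathcal{B}_{n-k}$ type identities). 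Concretely I would reduce the whole thing, via the binomial identities \eqref{eq:binid}--\eqref{eq:binpasgen}, to checking this single family of quadratic $C$-relations, and then verify that family directly from the closed form \eqref{eq:newcij}.

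The main obstacle is precisely this last step: organizing the combinatorial explosion of surviving terms so that the matching into cancelling pairs is transparent, and then pinning down the exact quadratic identity among the $C_{i,j}$ and proving it. The bookkeeping is heavier than in Lemma~\ref{lemma:sieven} because two genuinely different even multiplications $m_{2i}$ and $m_{2i'}$ with $2i, 2i' \geq 4$ now interact, so one cannot simply quote ``$m_{n}$ is acceptable'' to kill a term — one must track which of the two $A$-inputs $b,c$ lies in the inner block. I would handle this by always normalizing (via Lemma~\ref{lemma:cyc} and the stated constraints $\ell'\leq\ell''$) so that $b$ sits at a controlled distance from the start, reducing the number of genuinely distinct configurations to a manageable list, and by isolating the dependence on the outer variables $f_{\bullet}$ (which never get evaluated at $1_{A}$ here since $\ell = 0$) so that the cancellation is really a statement about the scalar prefactors alone.
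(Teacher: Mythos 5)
Your opening step (pruning by acceptability) is the right one and is also how the paper begins, but your identification of the surviving terms is incorrect, and this derails the decisive part of the plan. With $\ell = 0$ the inputs $a$ and $b$ are adjacent, and this kills \emph{every} composition in which both the inner and the outer multiplication have arity $\geq 4$: if the inner block starts at position $r \geq 2$ (in particular whenever it straddles $c$ but not $b$), the outer multiplication receives $a,b$ as its first two arguments and vanishes because it is acceptable; if it starts at $r = 0$ it receives $a,b$ itself; and if it starts at $r = 1$, then either the block contains both $b$ and $c$, so its output lies in $A$ and the outer map again sees two consecutive $A$-arguments, or it contains only $b$, in which case the inner map already vanishes by acceptability (its unique $A$-argument sits in the first slot, so $i_{1}i_{2k}=0$). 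So, contrary to your claim, no term with the inner bracket straddling one of $b,c$ and the outer straddling the other survives; the only surviving terms are the three involving $m_{2}$ together with $m_{n}$, namely $-\gan\big(a.m_{n}(b,tg_{1},\dots),th_{\ell''}\big)+\gan\big(m_{n}(ab,tg_{1},\dots),th_{\ell''}\big)-\gan\big(m_{n}(a,b\cdot tg_{1},\dots),th_{\ell''}\big)$, exactly as in \eqref{eq:siganomegan+1l0}.

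Consequently your ``decisive step'' --- telescoping products $C_{i,j}C_{i',j'}$ and a quadratic Bernoulli-type relation among the constants \eqref{eq:newcij} --- is both unnecessary for this lemma and left unproven in your proposal. For $\ell = 0$ all three surviving terms carry the single common coefficient $C_{\ell',\ell''}$, and the vanishing is a direct Leibniz-type expansion of \eqref{eq:mu2} (split into the subcases $\ell'=1$; $\ell'>1,\ \ell''=1$; $\ell',\ell''>1$), with no identity among the $C_{i,j}$ used at all; the quadratic relations you anticipate, packaged in the paper as \eqref{eq:Cgen} and proved via Bernoulli-number identities from \cite{BCM17}, are the content of the complementary case $\ell \geq 1$ (Lemma \ref{lemma:sioddl1}), not of this one. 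A smaller slip: with $\ell = 0$ the normal form only forces $\ell'' \geq 1$ when $\ell' \geq 1$; it does not give $\ell' \leq \ell''$, so the configuration $\ell'' = 1 < \ell'$ must also be handled.
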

\begin{proof}
The fact that $m_{2j}$ is acceptable for every integer $j \geq 2$ tells us that the term 
$\gan \circ ((m_{n+2-2k} \circ (\mathrm{id}^{\otimes r} \otimes m_{2k} \otimes \mathrm{id}^{\otimes (n+1-r-2k)})) \otimes \mathrm{id})(\omega)$ of $\operatorname{SI}(n+1)_{\gan}(\omega)$ vanishes for all $r \in \llbracket 0 , n+1-2k \rrbracket \setminus \{ 1 \}$ and all $k \in \llbracket 2, (n-2)/2 \rrbracket$. 
Analogously, the acceptability of $m_{n}$ tells us that $\gan \circ ((m_{n} \circ (\mathrm{id}^{\otimes r} \otimes m_{2} \otimes \mathrm{id}^{\otimes (n-r-1)})) \otimes \mathrm{id})(\omega)$ vanishes for all $r \in \llbracket 2 , n-1 \rrbracket$, as well as $\gan \circ ((m_{2} \circ  (m_{n} \otimes \mathrm{id})) \otimes \mathrm{id})(\omega) = 0$. 
With respect to the term $\gan \circ ((m_{n+2-2k} \circ (\mathrm{id} \otimes m_{2k} \otimes \mathrm{id}^{\otimes (n-2k)})) \otimes \mathrm{id})(\omega)$ for $k \in \llbracket 2, (n-2)/2 \rrbracket$, we see that $m_{2k}$ has either two arguments in $A$ or only one argument. 
In the first case, the term $\gan \circ ((m_{n+2-2k} \circ (\mathrm{id} \otimes m_{2k} \otimes \mathrm{id}^{\otimes (n-2k)})) \otimes \mathrm{id})(\omega)$ vanishes because $m_{n+2-2k}$ is acceptable, whereas in the latter the evaluation of $m_{2k}$ already vanishes, because $m_{2k}$ is acceptable. 
If $\ell' = 0$, the acceptability of $m_{n}$ implies that the remaining terms of $\operatorname{SI}(n+1)_{\gan}(\omega)$ vanish as well. 
We will thus assume that $\ell' \geq 1$.
In this case, $\operatorname{SI}(n+1)_{\gan}(\omega)$ reduces to 
\begin{equation}
\label{eq:siganomegan+1l0}
\begin{split}
     &-\gan\big(a . m_{n}(b,tg_{1},\dots,tg_{\ell'},c,th_{1}, \dots, th_{\ell''-1}),th_{\ell''}\big) 
     \\
     &+ \gan\big(m_{n}(a b,tg_{1},\dots,tg_{\ell'},c,th_{1}, \dots, th_{\ell''-1}),th_{\ell''}\big) 
     \\
     &- \gan\big(m_{n}(a, b \cdot tg_{1},\dots,tg_{\ell'},c,th_{1}, \dots, th_{\ell''-1}),th_{\ell''}\big).
\end{split}
\end{equation}

Let 
\[     \alpha = C_{\ell', \ell''} 
\prod_{i=2}^{\ell'-1} g_{i}(1_{A}) \prod_{i=2}^{\ell''-1} h_{i}(1_{A}).     \]
If $\ell' = 1$, then $\ell'' = n - 2 \geq 4$, and \eqref{eq:siganomegan+1l0} gives    
\begin{small}
\begin{equation*}
\begin{split}
     &\alpha \Big[ -\big( g_{1}(c) h_{1}(1_{A}) h_{\ell''}(ab) + g_{1}(b) h_{1}(c) h_{\ell''}(a) - g_{1}(cb) h_{1}(1_{A}) h_{\ell''}(a) 
     \\ 
     &\phantom{ \alpha } - g_{1}(1_{A}) h_{1}(c) h_{\ell''}(ab) \big) + \big( g_{1}(c) h_{1}(1_{A}) h_{\ell''}(ab) + g_{1}(a b) h_{1}(c) h_{\ell''}(1_{A}) 
     \\ 
     &\phantom{ \alpha} - g_{1}(cab) h_{1}(1_{A}) h_{\ell''}(1_{A}) - g_{1}(1_{A}) h_{1}(c) h_{\ell''}(ab) \big) - \big( g_{1}(ab) h_{1}(c) h_{\ell''}(1_{A}) 
     \\
     &\phantom{ \alpha} + g_{1}(cb) h_{1}(1_{A}) h_{\ell''}(a) - g_{1}(cab) h_{1}(1_{A}) h_{\ell''}(1_{A}) - g_{1}(b) h_{1}(c) h_{\ell''}(a) \big) \Big]
     \\
     &= 0.
\end{split}
\end{equation*}
\end{small}
\hskip -0.8mm If $\ell' > 1$ but $\ell'' = 1$, \eqref{eq:siganomegan+1l0} gives    
\begin{small}
\begin{equation*}
\begin{split}
     &\alpha \Big[ - \big( g_{1}(1_{A}) g_{\ell'}(c) h_{1}(a b) + g_{1}(b) g_{\ell'}(1_{A}) h_{1}(a c) - g_{1}(b) g_{\ell'}(c) h_{1}(a) 
     \\ 
     &- g_{1}(1_{A}) g_{\ell'}(1_{A}) h_{1}(a b c) \big) + \big( g_{1}(1_{A}) g_{\ell'}(c) h_{1}(ab) + g_{1}(a b) g_{\ell'}(1_{A}) h_{1}(c) 
     \\ 
     &- g_{1}(ab) g_{\ell'}(c) h_{1}(1_{A}) - g_{1}(1_{A}) g_{\ell'}(1_{A}) h_{1}(a b c) \big) - \big( g_{1}(ab) g_{\ell'}(1_{A}) h_{1}(c) 
     \\
     &+ g_{1}(b) g_{\ell'}(c) h_{1}(a) - g_{1}(ab) g_{\ell'}(c) h_{1}(1_{A}) - g_{1}(b) g_{\ell'}(1_{A}) h_{1}(a c) \big) \Big]
     \\
     &= 0.
\end{split}
\end{equation*}
\end{small}
\hskip -0.8mm Finally, if $\ell', \ell'' > 1$, then \eqref{eq:siganomegan+1l0} gives
\begin{small}
\begin{equation*}
\begin{split}
     &\alpha \Big[ - \big( g_{1}(1_{A}) g_{\ell'}(c) h_{1}(1_{A}) h_{\ell''}(ab) + g_{1}(b) g_{\ell'}(1_{A}) h_{1}(c) h_{\ell''}(a) - g_{1}(b) g_{\ell'}(c) h_{1}(1_{A}) h_{\ell''}(a) 
     \\ 
     &- g_{1}(1_{A}) g_{\ell'}(1_{A}) h_{1}(c) h_{\ell''}(ab) \big) + \big( g_{1}(1_{A}) g_{\ell'}(c) h_{1}(1_{A}) h_{\ell''}(ab) + g_{1}(a b) g_{\ell'}(1_{A}) h_{1}(c) h_{\ell''}(1_{A}) 
     \\ 
     &- g_{1}(ab) g_{\ell'}(c) h_{1}(1_{A}) h_{\ell''}(1_{A}) - g_{1}(1_{A}) g_{\ell'}(1_{A}) h_{1}(c) h_{\ell''}(ab) \big) - \big( g_{1}(ab) g_{\ell'}(1_{A}) h_{1}(c) h_{\ell''}(1_{A}) 
     \\
     &+ g_{1}(b) g_{\ell'}(c) h_{1}(1_{A}) h_{\ell''}(a) - g_{1}(ab) g_{\ell'}(c) h_{1}(1_{A}) h_{\ell''}(1_{A}) - g_{1}(b) g_{\ell'}(1_{A}) h_{1}(c) h_{\ell''}(a) \big) \Big]
     \\
     &= 0.
\end{split}
\end{equation*}
\end{small} 
\hskip -0.8mm The lemma is thus proved.
\end{proof}

\paragraph{\texorpdfstring{The Stasheff identities $\operatorname{SI}(N)$ with odd parameter $N>5$ and $\ell > 0$.}{The Stasheff identities SI(N) with odd parameter N > 5 and l > 0.}} 
\label{paragraph:stasheffodd>5ell>1}

The aim of this last paragraph is to ascertain the next result, which completes our proof of the Stasheff identities for $\partial A$. 
\begin{lemma}
\label{lemma:sioddl1}
Assume the same hypotheses as in Theorem \ref{theorem:Main}. 
We use the notation introduced in Subsubsection \ref{subsubsec:generalsetting}. 
Let $n \geq 6$ be an even integer and let $\omega \in B_{\bar{i}}$ be an element of the form \eqref{eq:omega}, with $\bar{i} = (i_{1}, \dots, i_{n+2}) \in \{ 0 , 1 \}^{n+2}$ and $| \bar{i} | = n-1$. 
Suppose that $\ell \geq 1$. 
Then, $\operatorname{SI}(n+1)_{\gan}(\omega)$ vanishes. 
\end{lemma}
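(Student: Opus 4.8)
The plan is as follows. Since $n\geq 6$, Remark \ref{remark:tau1} lets us assume $\para=1$. We keep the notation of Subsubsection \ref{subsubsec:generalsetting}, so $\omega=\omega'\otimes th_{\ell''}$ with $\omega'$ the tuple \eqref{eq:omega} carrying the three $A$-entries $a,b,c$ separated by the non-empty blocks of dual elements of lengths $\ell,\ell',\ell''$, where $1\leq\ell\leq\min(\ell',\ell'')$ and $\ell+\ell'+\ell''=n-1$. Because $n\geq 6$, the identity $\operatorname{SI}(n+1)_{\gan}$ involves only the even multiplications $m_2,m_4,\dots,m_n$, so each of its terms has the form $\pm\,\gan\big(m_p(\dots,m_q(\dots),\dots),th_{\ell''}\big)$ with $p,q$ even and $p+q=n+2$; we distinguish the cases $q=2$ (type I), $p=2$ (type II) and $p,q\geq 4$ (type III).

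First I would cut down the list of nonzero terms. By \eqref{eq:mun} and the triviality of the kernel of $\gan$, any $m_k$ with $k\geq 4$ applied to a tuple of elements of $\partial A$ vanishes unless that tuple contains exactly one entry from $A$ (output in $A^{\#}[-1]$) or exactly two (output in $A$); acceptability of the $m_k$ with $k\geq 3$ then forces the $A$-entries of such a tuple to be pairwise non-adjacent and, in the one-entry case, to avoid the first and last slots; and $m_2$ kills $A^{\#}[-1]\otimes A^{\#}[-1]$. Since $\omega'$ carries only the three $A$-entries $a,b,c$ and $\ell,\ell',\ell''\geq 1$, these constraints leave only finitely many surviving terms: the type I term with inner $m_n$ applied to $(tf_1,\dots,tf_\ell,b,tg_1,\dots,tg_{\ell'},c,th_1,\dots,th_{\ell''-1})$; the type II terms whose inner $m_2$ combines one of the adjacent pairs $(a,tf_1)$, $(tf_\ell,b)$, $(b,tg_1)$, $(tg_{\ell'},c)$, $(c,th_1)$; and those type III terms whose inner $m_q$ absorbs exactly one, or two non-adjacent, of the entries $a,b,c$ compatibly with the acceptability of the outer $m_p$. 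As in the proof of Lemma \ref{lemma:sieven}, I would then organise the argument into sub-cases on the relative sizes of $\ell,\ell',\ell''$ (chiefly $\ell=1$ versus $\ell\geq 2$, and whether $\ell''=1$), since the precise surviving list depends on these.

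Next, for each surviving configuration I would substitute the explicit formulas \eqref{eq:mu2}. In every term the dual arguments lying outside the active window get evaluated at $1_A$ by the maps $\operatorname{ev}_\bullet$, so their values, together with the constants from \eqref{eq:newcij}, factor out. After collecting terms and using the cyclicity property \eqref{eq:ip1} (and, where needed, an induction on $n$), I expect $\operatorname{SI}(n+1)_{\gan}(\omega)$ to reduce to a combination of the already-established Stasheff identities $\operatorname{SI}(4)_{\gan}$ and $\operatorname{SI}(5)_{\gan}$ (Lemmas \ref{lemma:si4} and \ref{lemma:si5}), which vanish, together with a single quadratic relation among the constants $C_{i,j}$ — namely the recursion \eqref{eq:Cgen} announced in the Introduction, of the shape $\sum C_{i,j}\,C_{i',j'}=(\text{lower-order terms})$ summed over the nested splittings compatible with the configuration.

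The hard part will be twofold. On the combinatorial side one must correctly enumerate the surviving terms in each sub-case and see which cancels against which, which is what makes the argument lengthy and forces the split into sub-cases on $(\ell,\ell',\ell'')$. On the arithmetic side one must verify the quadratic recursion \eqref{eq:Cgen} for the Bernoulli expression \eqref{eq:newcij}; rewriting $C_{i,j}$ through \eqref{eq:newcij} and using \eqref{eq:binid} and \eqref{eq:binpasgen}, this reduces to the classical Bernoulli recursion, equivalently to the generating-function identity for $x/(e^x-1)$. Once both are settled, $\operatorname{SI}(n+1)_{\gan}(\omega)$ vanishes for every $\omega$ of the form \eqref{eq:omega} with $\ell\geq 1$; combined with Lemmas \ref{lemma:si4}, \ref{lemma:si5}, \ref{lemma:sieven} and \ref{lemma:sioddl0}, this completes the proof of Proposition \ref{proposition:siodd}, and hence of Theorem \ref{theorem:Main}.
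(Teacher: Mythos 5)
Your combinatorial outline matches the paper's strategy in spirit: after fixing $\para=1$ via Remark \ref{remark:tau1}, the paper also enumerates the surviving terms of $\operatorname{SI}(n+1)_{\gan}(\omega)$ (split as the $m_{2}$-$m_{n}$ part and the part involving two multiplications of arity $\geq 4$), substitutes \eqref{eq:mun}, and finds that all functional factors pull out of every term as a common expression $E^{\ell,\ell',\ell''}\kappa(\Omega)$, so that the vanishing of $\operatorname{SI}(n+1)_{\gan}(\omega)$ for $\ell\geq 1$ is equivalent to a purely numerical quadratic relation \eqref{eq:Cgen} among the constants $C_{i,j}$ (Facts \ref{fact:sif}--\ref{fact:cgen}). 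One small discrepancy: in this $\ell\geq 1$ case no appeal to $\operatorname{SI}(4)_{\gan}$, $\operatorname{SI}(5)_{\gan}$ or an induction on $n$ occurs, nor is it needed; the reduction is direct, and the earlier Stasheff identities only enter the even-parameter Lemma \ref{lemma:sieven}.

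The genuine gap is your treatment of the arithmetic step. You assert that \eqref{eq:Cgen} ``reduces to the classical Bernoulli recursion, equivalently to the generating-function identity for $x/(e^x-1)$.'' It does not, except in the degenerate case $\ell=\ell'=1$, where \eqref{eq:C1} is essentially Euler's quadratic identity. For general $(\ell,\ell',\ell'')$ with $\ell+\ell'+\ell''=2k+1$, substituting \eqref{eq:newcij} turns \eqref{eq:Cgen} into a three-parameter family of identities of the form
\begin{equation}
\sum_{p=1}^{3} (-1)^{\ell_{p}} \begin{pmatrix} 2k-1 \\ \ell_{p}-1 \end{pmatrix} \frac{\mathcal{B}_{2k}}{(2k)!}
= - \sum_{\text{pairs}} \sum_{i} \begin{pmatrix} 2i-1 \\ \ell_{q}-1 \end{pmatrix}\begin{pmatrix} 2k-2i-1 \\ \ell_{r}-1 \end{pmatrix} \frac{\mathcal{B}_{2i}}{(2i)!}\frac{\mathcal{B}_{2k-2i}}{(2k-2i)!} + \cdots,
\end{equation}
\textit{i.e.} quadratic Bernoulli sums weighted by products of binomial coefficients depending on all three parameters. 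These are not obtainable by a routine manipulation of $x/(e^{x}-1)$; the paper proves them by first massaging the coefficients $\mu_{2j}(a,b,c)$ of \cite{BCM17}, Thm.\ 2.4 (identities coming from the Lawrence--Sullivan dg Lie algebra) through the binomial identities \eqref{eq:binid}--\eqref{eq:binids}, establishing the auxiliary relations \eqref{eq:maincomp} and \eqref{eq:maincomp2}, and then exhibiting each $\mathcalboondox{Eq}(\ell,\ell',\ell'')$ as the explicit linear combination \eqref{eq:ide} of two of those known identities. Without this input (or an equivalent independent proof of the weighted identities), your argument stops exactly at the point that carries the real content of the lemma, so as written the proposal is incomplete.
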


The proof will require some preparations, namely Facts \ref{fact:sif}, \ref{fact:sil}, \ref{fact:sim}, \ref{fact:codd}, \ref{fact:ceven} and \ref{fact:cgen}, that we will now provide. 
Note that $\ell'' > 1$, since $\ell'' = 1$ implies $\ell = \ell'=1$ but $\ell+\ell' + \ell'' = n - 1\geq 5$, and note as well that $\ell + \ell' +2 = n + 1 - \ell'' \leq n- 1$. 
To simplify some expressions appearing in the sequel, set 
\begin{equation}
\label{eq:kappa}
     \kappa(\Omega) = \prod_{i=2}^{\ell-1} f_{i}(1_{A}) \prod_{i=2}^{\ell'-1} g_{i}(1_{A}) \prod_{i=2}^{\ell''-1} h_{i}(1_{A}).
\end{equation}

Define the element $\operatorname{SI}(n+1)^{2,n}_{\gan}(\omega)$ by 
\begin{equation}
\label{eq:sin+12n}
\begin{split}
   \sum_{r = 0}^{n-1} (-1)^{r} &\gan\Big( \big(m_{n} \circ (\mathrm{id}^{\otimes r} \otimes m_{2} \otimes \mathrm{id}^{\otimes (n-r-1)})\big) \otimes \mathrm{id} \Big)(\omega)
   \\
   + &\gan\Big(\big(m_{2} \circ (m_{n} \otimes \mathrm{id})\big) \otimes \mathrm{id}\Big)(\omega) - \gan\Big(\big(m_{2} \circ (\mathrm{id} \otimes m_{n})\big) \otimes \mathrm{id}\Big)(\omega). 
\end{split}
\end{equation}
Notice that the terms in \eqref{eq:sin+12n} with $r \in \llbracket 0, n-1 \rrbracket \setminus \{ 0, \ell, \ell+1, \ell + \ell' +1, \ell + \ell' +2 \}$ vanish, since the product in $\partial A$ of two elements of $A^{\#}[-1]$ vanishes. 
Moreover, by a degree argument the first term of the second line of \eqref{eq:sin+12n} also vanishes. 
Hence, $\operatorname{SI}(n+1)^{2,n}_{\gan}(\omega)$ reduces to 
\begin{equation}
\label{eq:sin+12nbis}
\begin{split}
   &- \gan\big(m_{n}(tf_{1},\dots,tf_{\ell},b, tg_{1}, \dots, tg_{\ell'}, c, th_{1}, \dots, th_{\ell''-1}), th_{\ell''} \cdot a\big)
   \\
   &+ \gan\big(m_{n}(a \cdot tf_{1},tf_{2},\dots,tf_{\ell},b, tg_{1}, \dots, tg_{\ell'}, c, th_{1}, \dots, th_{\ell''-1}), th_{\ell''}\big) 
   \\
&+(-1)^{\ell} \gan\big(m_{n}(a,tf_{1},\dots,tf_{\ell-1},tf_{\ell} \cdot b, tg_{1}, \dots, tg_{\ell'}, c, th_{1}, \dots, th_{\ell''-1}), th_{\ell''}\big) 
\\
&-(-1)^{\ell} \gan\big(m_{n}(a,tf_{1},\dots,tf_{\ell},b \cdot tg_{1}, tg_{2}, \dots, tg_{\ell'}, c, th_{1}, \dots, th_{\ell''-1}), th_{\ell''}\big) 
\\
&-(-1)^{\ell + \ell'} \gan\big(m_{n}(a,tf_{1},\dots,tf_{\ell},b,tg_{1}, \dots, tg_{\ell'-1}, tg_{\ell'} \cdot c, th_{1}, \dots, th_{\ell''-1}), th_{\ell''}\big) 
\\
&+(-1)^{\ell + \ell'} \gan\big(m_{n}(a,tf_{1},\dots,tf_{\ell}, b, tg_{1}, \dots, tg_{\ell'}, c \cdot th_{1}, th_{2}, \dots, th_{\ell''-1}), th_{\ell''}\big). 
\end{split}
\end{equation}
Set 
\begin{equation}
\label{eq:betagen}
      \beta^{\ell,\ell',\ell''} = \big(C_{\ell',\ell''+\ell}-(-1)^{\ell} C_{\ell'',\ell+\ell'}+(-1)^{\ell+\ell'} C_{\ell,\ell'+\ell''}\big) \kappa(\Omega),     
\end{equation}
where $\kappa(\Omega)$ was defined in \eqref{eq:kappa}. 
Using \eqref{eq:mun} in \eqref{eq:sin+12nbis}, a long but easy computation tells us that $\operatorname{SI}(n+1)^{2,n}_{\gan}(\omega)$ 
is given by $\beta^{\ell,\ell',\ell''} E^{\ell,\ell',\ell''}$, where $E^{\ell,\ell',\ell''}$ is given by 
\begin{equation}
\label{eq:sin+12nbisl=l'=1}
\begin{split}
   &-\big(f_{1}(b) g_{1}(c) h_{1}(1_{A}) + f_{1}(1_{A}) g_{1}(b) h_{1}(c) 
   \\
   &\phantom{-\Big(}- f_{1}(b) g_{1}(1_{A}) h_{1}(c) - f_{1}(1_{A}) g_{1}(c b) h_{1}(1_{A}) \big) h_{\ell''}(a) 
   \\
   &+\big(f_{1}(ba) g_{1}(c) h_{1}(1_{A})  + f_{1}(a) g_{1}(b) h_{1}(c) 
   \\
   &\phantom{-\Big(}- f_{1}(ba) g_{1}(1_{A}) h_{1}(c) - f_{1}(a) g_{1}(c b) h_{1}(1_{A}) \big)h_{\ell''}(1_{A}) 
\end{split}
\end{equation}
if $\ell = \ell' = 1$, by
\begin{equation}
\label{eq:sin+12nbisl=1,l'>1}
\begin{split}
   -&\Big( \big( f_{1}(b) g_{1}(1_{A}) - f_{1}(1_{A}) g_{1}(b) \big) h_{\ell''}(a) - \big( f_{1}(b a) g_{1}(1_{A}) - f_{1}(a) g_{1}(b) \big) h_{\ell''}(1_{A}) \Big)  
   \\
   &\phantom{\Big(}\big( g_{\ell'}(c) h_{1}(1_{A}) - g_{\ell'}(1_{A}) h_{1}(c)\big)
\end{split}
\end{equation}
if $\ell = 1$, $\ell' > 1$, and by 
\begin{equation}
\label{eq:sin+12nbisl>1}
\begin{split}
&\big(f_{1}(a) h_{\ell''}(1_{A}) - f_{1}(1_{A}) h_{\ell''}(a)\big) 
\big( f_{\ell}(b) g_{1}(1_{A}) - f_{\ell}(1_{A}) g_{1}(b) \big)
\\
&\big( g_{\ell'}(c) h_{1}(1_{A}) - g_{\ell'}(1_{A}) h_{1}(c) \big)    
\end{split}
\end{equation}
if $\ell > 1$. 

Define the element $\operatorname{SI}(n+1)^{\neq 2,n}_{\gan}(\omega)$ by 
\begin{equation}
\label{eq:sin+1not2n}
\begin{split}
   \sum_{k=2}^{\frac{n-2}{2}} \sum_{r = 0}^{n-2k+1} (-1)^{r} &\gan\Big(\big(m_{n-2k+2} \circ (\mathrm{id}^{\otimes r} \otimes m_{2k} \otimes \mathrm{id}^{\otimes (n-r-2k+1)})\big) \otimes \mathrm{id} \Big)(\omega). 
\end{split}
\end{equation}
It is clear that $\operatorname{SI}(n+1)_{\gan}(\omega) = \operatorname{SI}(n+1)^{2,n}_{\gan}(\omega) + \operatorname{SI}(n+1)^{\neq 2,n}_{\gan}(\omega)$. 
Moreover, by the acceptability of $m_{2k}$ for $k \in \llbracket 2, (n-2)/2 \rrbracket$ and the fact that is $1_{A}$-normalized, we see that the terms of $\operatorname{SI}(n+1)^{\neq 2,n}_{\gan}(\omega)$ corresponding to indices not appearing in the following list vanish:
\begin{enumerate}[label=(T\arabic*)]
\item\label{item:T1} $r = 0$ or $r =1$, and $2k \in \llbracket \ell + 2, \ell + \ell' + 1 \rrbracket$; 
\item\label{item:T2} $r \in  \llbracket 2, \ell \rrbracket$ and $2k \in \llbracket \ell + \ell' + 2 - r, \ell + \ell' + 3 - r \rrbracket$ (note that there is only one value of $k$ for every value of $r$, and this item is nonvoid if $\ell>1$); 
\item\label{item:T3} $r = \ell + 1$ and $2k \in \llbracket \ell' + 2, \ell' + \ell'' + 1 \rrbracket$; 
\item\label{item:T4} $r = \ell + 2$ and $2k \in \llbracket \ell' + 2, \ell' + \ell'' \rrbracket$. 
\end{enumerate}
We let the reader check that the lower and upper limits for $2k$ in the previous list always imply that $2k \in \llbracket 4, n-2 \rrbracket$. 

To facilitate the handling of our expressions we will consider the functions $\mathcalboondox{p}, \mathcalboondox{i} : \ZZ \rightarrow \{ 0, 1\}$ defined by 
$\mathcalboondox{p}(j) + \mathcalboondox{i}(j) = 1$ for all $j \in \ZZ$, $\mathcalboondox{p}(j) = 0$ if $j$ is even and $\mathcalboondox{p}(j) = 1$ if $j$ is odd. 
We shall also write $\mathcalboondox{p}_{j}$ and $\mathcalboondox{i}_{j}$ instead of $\mathcalboondox{p}(j)$ and $\mathcalboondox{i}(j)$, respectively. 
We also recall that, given $x \in \RR$, $\lfloor x \rfloor = \operatorname{sup} \{ n \in \ZZ : n \leq x\}$ is the \textbf{\textcolor{myblue}{floor}} function, 
and $\lceil x \rceil = - \lfloor - x \rfloor$ is the \textbf{\textcolor{myblue}{ceiling}} function.

To simplify our notation, set $\gamma_{j,0}^{\ell,\ell', \ell''}$ as 
\begin{equation}
\label{eq:gammaj0}
\begin{split}
\gan\Big(m_{n-\ell-2j+2-\mathcalboondox{p}_{\ell}}\big(m_{\ell+2j+\mathcalboondox{p}_{\ell}}(a,&tf_{1},\dots,tf_{\ell},b, tg_{1}, \dots, tg_{2j-2+\mathcalboondox{p}_{\ell}}),
\\
&tg_{2j-1+\mathcalboondox{p}_{\ell}}, \dots, tg_{\ell'}, c, th_{1}, \dots, th_{\ell''-1}\big), th_{\ell''} \Big)
\end{split}
\end{equation}
for $j \in \llbracket 1, \lfloor \ell'/2 \rfloor + \mathcalboondox{i}_{\ell} \mathcalboondox{p}_{\ell'} \rrbracket$, $\gamma_{j,1}^{\ell,\ell',\ell''}$ as 
\begin{equation}
\label{eq:gammaj1}
\begin{split}
\gan\Big(m_{n-\ell-2j+2-\mathcalboondox{p}_{\ell}}\big(a,&m_{\ell+2j+\mathcalboondox{p}_{\ell}}(tf_{1},\dots,tf_{\ell},b, tg_{1}, \dots, tg_{2j-1+\mathcalboondox{p}_{\ell}}),
\\
&tg_{2j+\mathcalboondox{p}_{\ell}}, \dots, tg_{\ell'}, c, th_{1}, \dots, th_{\ell''-1}\big), th_{\ell''}\Big)
\end{split} 
\end{equation}
for $j \in \llbracket 1, \lfloor \ell'/2 \rfloor + \mathcalboondox{i}_{\ell} \mathcalboondox{p}_{\ell'} \rrbracket$, $\gamma_{j,\ell+1}^{\ell,\ell',\ell''}$ as 
\begin{equation}
\label{eq:gammajl+1}
\begin{split}
\gan\Big(m_{n-\ell'-2j+2-\mathcalboondox{p}_{\ell'}}\big(a,tf_{1},\dots,tf_{\ell},&m_{\ell'+2j+\mathcalboondox{p}_{\ell'}}(b, tg_{1}, \dots, tg_{\ell'},c, th_{1}, \dots, th_{2j-2+\mathcalboondox{p}_{\ell'}}),
\\
&th_{2j-1+\mathcalboondox{p}_{\ell'}}, \dots, th_{\ell''-1}\big), th_{\ell''}\Big)
\end{split} 
\end{equation}
for $j \in \llbracket 1, \lfloor \ell''/2 \rfloor + \mathcalboondox{i}_{\ell'} \mathcalboondox{p}_{\ell''}\rrbracket$, and $\gamma_{j,\ell+2}^{\ell,\ell',\ell''}$ as 
\begin{equation}
\label{eq:gammajl+2}
\begin{split}
\gan\Big(m_{n-\ell'-2j+2-\mathcalboondox{p}_{\ell'}}\big(a,tf_{1},\dots,tf_{\ell},b, &m_{\ell'+2j+\mathcalboondox{p}_{\ell'}}(tg_{1}, \dots, tg_{\ell'},c, th_{1}, \dots, th_{2j-1+\mathcalboondox{p}_{\ell'}}),
\\
&th_{2j+\mathcalboondox{p}_{\ell'}}, \dots, th_{\ell''-1}\big), th_{\ell''}\Big)
\end{split} 
\end{equation}
for $j \in \llbracket 1, \lfloor \ell''/2 \rfloor + \mathcalboondox{i}_{\ell'} \mathcalboondox{p}_{\ell''} - \mathcalboondox{p}_{\ell'-\ell''} \rrbracket$. 

For $\ell > 1$ and $r \in \llbracket 2, \ell \rrbracket$ we also define $\gamma_{r}^{\ell,\ell',\ell''}$ as 
\begin{equation}
\label{eq:gammareven}
\begin{split}
\gan\Big(m_{r + \ell''+1}\big(a,tf_{1},\dots,tf_{r-1}, m_{\ell+\ell'+2-r}(&tf_{r}, \dots, tf_{\ell},b, tg_{1}, \dots, tg_{\ell'}),
\\
&c, th_{1}, \dots, th_{\ell''-1}\big), th_{\ell''}\Big)
\end{split} 
\end{equation}
if $\ell+\ell'-r$ is even, and as 
\begin{equation}
\label{eq:gammarodd}
\begin{split}
\gan\Big(m_{r + \ell''}\big(a,tf_{1},\dots,tf_{r-1}, m_{\ell+\ell'+3-r}(&tf_{r}, \dots, tf_{\ell},b, tg_{1}, \dots, tg_{\ell'}, c),
\\
&th_{1}, \dots, th_{\ell''-1}\big), th_{\ell''}\Big)
\end{split} 
\end{equation}
if $\ell+\ell'-r$ is odd. 

Define $\operatorname{SI}(n+1)_{\gan}^{f}(\omega)$ to be 
\begin{equation}
\label{eq:sin+1f}
   \sum_{r = 0}^{1} \sum_{k=\lceil (\ell+2)/2 \rceil}^{\lfloor (\ell + \ell' + 1)/2 \rfloor} (-1)^{r} \gan\Big( \big(m_{n-2k+2} \circ (\mathrm{id}^{\otimes r} \otimes m_{2k} \otimes \mathrm{id}^{\otimes (n-r-2k+1)})\big) \otimes \mathrm{id} \Big)(\omega), 
\end{equation}
$\operatorname{SI}(n+1)_{\gan}^{m}(\omega)$ to be
\begin{small}
\begin{equation}
\label{eq:sin+1m}
   \sum_{r = 2}^{\ell} (-1)^{r} \gan\Big( \big(m_{\ell''+r+1-\mathcalboondox{p}_{\ell-\ell'-r}} \circ (\mathrm{id}^{\otimes r} \otimes m_{\ell+\ell'-r+2+\mathcalboondox{p}_{\ell-\ell'-r}} \otimes \mathrm{id}^{\otimes (\ell''-\mathcalboondox{p}_{\ell-\ell'-r})})\big) \otimes \mathrm{id} \Big)(\omega), 
\end{equation}
\end{small}
\hskip -0.8mm and $\operatorname{SI}(n+1)_{\gan}^{l}(\omega)$ to be 
\begin{equation}
\label{eq:sin+1l}
   \sum_{r = \ell+1}^{\ell+2} \sum_{k=\lceil (\ell'+2)/2 \rceil}^{\lfloor (\ell' + \ell'' + \delta_{r,\ell+1})/2 \rfloor} (-1)^{r} \gan\Big( \big(m_{n-2k+2} \circ (\mathrm{id}^{\otimes r} \otimes m_{2k} \otimes \mathrm{id}^{\otimes (n-r-2k+1)})\big) \otimes \mathrm{id} \Big)(\omega), 
\end{equation}
\textit{i.e.} $\operatorname{SI}(n+1)_{\gan}^{f}(\omega)$ is the sum of the terms in $\operatorname{SI}(n+1)^{\neq 2,n}_{\gan}(\omega)$ corresponding to the case \ref{item:T1}, $\operatorname{SI}(n+1)_{\gan}^{m}(\omega)$ is the sum of the terms in $\operatorname{SI}(n+1)^{\neq 2,n}_{\gan}(\omega)$ corresponding to the case \ref{item:T2}, and $\operatorname{SI}(n+1)_{\gan}^{l}(\omega)$ is the sum of the terms in $\operatorname{SI}(n+1)^{\neq 2,n}_{\gan}(\omega)$ corresponding to the cases \ref{item:T3} and \ref{item:T4}. 
As a consequence, 
\[     \operatorname{SI}(n+1)^{\neq 2,n}_{\gan}(\omega) = \operatorname{SI}(n+1)_{\gan}^{f}(\omega) + \operatorname{SI}(n+1)_{\gan}^{m}(\omega) + \operatorname{SI}(n+1)_{\gan}^{l}(\omega).     \]
Moreover, using definitions \eqref{eq:gammaj0}-\eqref{eq:gammarodd}, $\operatorname{SI}(n+1)^{f}_{\gan}(\omega)$ is given by 
\begin{equation}
\label{eq:sin+1fbis}
\begin{split}
   &\sum_{j=1}^{\lfloor \ell'/2 \rfloor + \mathcalboondox{i}_{\ell} \mathcalboondox{p}_{\ell'} - \mathcalboondox{p}_{\ell-\ell'}} \big( \gamma_{j,0}^{\ell,\ell',\ell''} - \gamma_{j,1}^{\ell,\ell',\ell''}  \big) + \mathcalboondox{p}_{\ell-\ell'} \big( \gamma_{\lfloor \ell'/2 \rfloor+ \mathcalboondox{i}_{\ell} \mathcalboondox{p}_{\ell'},0}^{\ell,\ell',\ell''} - \gamma_{\lfloor \ell'/2 \rfloor+ \mathcalboondox{i}_{\ell} \mathcalboondox{p}_{\ell'},1}^{\ell,\ell',\ell''} \big),
\end{split}
\end{equation}
whereas $\operatorname{SI}(n+1)^{m}_{\gan}(\omega)$ is given by 
\begin{equation}
\label{eq:sin+1mbis}
\begin{split}
   &\mathcalboondox{p}_{\ell-\ell'} \gamma_{2}^{\ell,\ell',\ell''}+ (-1)^{\mathcalboondox{p}_{\ell-\ell'}} \sum_{j=1}^{\lfloor \ell /2 \rfloor - 1 + \mathcalboondox{p}_{\ell} \mathcalboondox{p}_{\ell'}} \big(\gamma_{2j+\mathcalboondox{p}_{\ell-\ell'}}^{\ell,\ell',\ell''} - \gamma_{2j +\mathcalboondox{p}_{\ell-\ell'}+1}^{\ell,\ell',\ell''}\big) + (-1)^{\ell} \mathcalboondox{i}_{\ell'} \gamma_{\ell}^{\ell,\ell',\ell''},
\end{split}
\end{equation}
and $\operatorname{SI}(n+1)^{l}_{\gan}(\omega)$ is given by 
\begin{equation}
\label{eq:sin+1lbis}
\begin{split}
   &-(-1)^{\ell}\bigg(\sum_{j=1}^{\lfloor \ell''/2 \rfloor + \mathcalboondox{i}_{\ell'} \mathcalboondox{p}_{\ell''} - \mathcalboondox{p}_{\ell'-\ell''}} \big( \gamma_{j,\ell+1}^{\ell,\ell',\ell''} - \gamma_{j,\ell+2}^{\ell,\ell',\ell''}  \big) + \mathcalboondox{p}_{\ell'-\ell''} \gamma_{\lfloor \ell''/2 \rfloor+ \mathcalboondox{i}_{\ell'} \mathcalboondox{p}_{\ell''},\ell+1}^{\ell,\ell',\ell''}\bigg).
\end{split}
\end{equation}

\begin{fact}
\label{fact:sif}
Assume the terminology we introduced in Subsubsection \ref{subsubsec:generalsetting} and after Lemma \ref{lemma:sioddl1}. 
Set $\bar{\delta}_{0}^{\ell,\ell',\ell''} = C_{1,\ell''} C_{\ell,\ell'} \kappa(\Omega)$, where $\kappa(\Omega)$ was defined in \eqref{eq:kappa}.   
Then, $\operatorname{SI}(n+1)_{\gan}^{f}(\omega)$ is given by 
\begin{equation}
\label{eq:gamma0-1bisbisbis}
\begin{split}
        &\mathcalboondox{p}_{\ell-\ell'} \bar{\delta}_{0}^{\ell,\ell',\ell''} \tilde{\Sigma}_{f}^{\ell,\ell',\ell''}(\Omega) h_{1}(1_{A})
        \\ 
        &+ \sum_{j=1}^{\lfloor \ell'/2 \rfloor + \mathcalboondox{i}_{\ell} \mathcalboondox{p}_{\ell'}} \bar{\beta}_{j,f}^{\ell,\ell',\ell''} \Sigma_{j,f}^{\ell,\ell',\ell''}(\Omega) \big(g_{\ell'}(c) h_{1}(1_{A}) - g_{\ell'}(1_{A}) h_{1}(c)\big),     
        \end{split}     
\end{equation}
where 
\[     \bar{\beta}_{j,f}^{\ell,\ell',\ell''} = C_{\ell'-2j+2-\mathcalboondox{p}_{\ell},\ell''} C_{\ell,2j-1+\mathcalboondox{p}_{\ell}} \kappa(\Omega),       \]   
$\Sigma_{j,f}^{\ell,\ell',\ell''}(\Omega)$ is given by 
\begin{small}
\begin{equation}
 \label{eq:sigmaf}
       \begin{cases}
f_{1}(b) g_{1}(1_{A}) h_{\ell''}(a) + f_{1}(a) g_{1}(b) h_{\ell''}(1_{A})           &\text{}
\\
- f_{1}(ba) g_{1}(1_{A}) h_{\ell''}(1_{A}) - f_{1}(1_{A}) g_{1}(b) h_{\ell''}(a),  &\text{if $\ell = 1$;}
\\
&\text{}
\\
- \big(f_{1}(a) h_{\ell''}(1_{A}) - f_{1}(1_{A}) h_{\ell''}(a)\big)\big( f_{\ell}(b) g_{1}(1_{A}) - f_{\ell}(1_{A}) g_{1}(b) \big), &\text{if $\ell > 1$ odd, or}
\\
&\text{$\ell > 1$ even and $j>1$;}
\\
g_{1}(1_{A}) \big(f_{1}(a) f_{\ell}(1_{A}) h_{\ell''}(b) + f_{1}(1_{A}) f_{\ell}(b) h_{\ell''}(a) &\text{}
\\
- f_{1}(a) f_{\ell}(b) h_{\ell''}(1_{A}) - f_{1}(1_{A}) f_{\ell}(1_{A}) h_{\ell''}(ab)\big),  &\text{if $\ell$ even and $j = 1$;}
 \end{cases}
\end{equation}
\end{small}
\hskip -0.8mm and $\tilde{\Sigma}_{f}^{\ell,\ell',\ell''}(\Omega)$ is given by 
\begin{small}
\begin{equation}
\label{eq:tildesigmaf1}
\begin{cases}
 \big(f_{1}(ba) g_{1}(1_{A}) g_{\ell'}(c)  + f_{1}(ca) g_{1}(b) g_{\ell'}(1_{A})          &\text{}
\\
- f_{1}(a) g_{1}(b) g_{\ell'}(c) - f_{1}(bca) g_{1}(1_{A}) g_{\ell'}(1_{A})\big) h_{\ell''}(1_{A})  &\text{}
\\
+ \big(f_{1}(bc) g_{1}(1_{A}) g_{\ell'}(1_{A})  + f_{1}(1_{A}) g_{1}(b) g_{\ell'}(c)         &\text{}
\\
- f_{1}(b) g_{1}(1_{A}) g_{\ell'}(c) - f_{1}(c) g_{1}(b) g_{\ell'}(1_{A})\big) h_{\ell''}(a),  &\text{if $\ell = 1$,}
\\
&\text{}
\\
\Big[ \big(f_{1}(a) g_{\ell'}(c) - f_{1}(c a) g_{\ell'}(1_{A})\big) h_{\ell''}(1_{A}) 
- \big(f_{1}(1_{A}) g_{\ell'}(c) - f_{1}(c) g_{\ell'}(1_{A})\big) h_{\ell''}(a) \Big]       &\text{}
\\
\big(f_{\ell}(b) g_{1}(1_{A}) - f_{\ell}(1_{A}) g_{1}(b) \big) ,  &\text{if $\ell > 1$.}
\end{cases}
\end{equation}
\end{small}
\end{fact}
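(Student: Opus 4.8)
The plan is to carry out a direct expansion, starting from the expression \eqref{eq:sin+1fbis}, which already presents $\operatorname{SI}(n+1)^{f}_{\gan}(\omega)$ as the alternating sum $\sum_{j}\big(\gamma_{j,0}^{\ell,\ell',\ell''}-\gamma_{j,1}^{\ell,\ell',\ell''}\big)$ together with a single boundary correction that is present precisely when $\ell-\ell'$ is odd. First I would evaluate each $\gamma_{j,0}^{\ell,\ell',\ell''}$ and $\gamma_{j,1}^{\ell,\ell',\ell''}$ in closed form, by unwinding the two nested higher multiplications appearing in \eqref{eq:gammaj0}--\eqref{eq:gammaj1} via the defining formula \eqref{eq:mun}, the explicit cyclic expression \eqref{eq:mu2} for $\mathscr{M}$, and the evaluation \eqref{eq:ev}; then I would resum over $j$ and simplify.

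Before the main expansion I would record the degree bookkeeping. A short degree count shows that the inner multiplication in $\gamma_{j,0}^{\ell,\ell',\ell''}$ outputs in $A$ (its arguments include the two algebra elements $a$ and $b$), while the inner multiplication in $\gamma_{j,1}^{\ell,\ell',\ell''}$ outputs in $A^{\#}[-1]$ (its only algebra argument is $b$); in both cases the outer composition then has exactly two arguments in $A$, so \eqref{eq:mun} applies and the term need not vanish. One also checks that, throughout the admissible range of $j$, both the inner and the outer multiplications are always of the form $m_{2k}$ with $k\ge 2$ — never $m_{2}$ or $m_{3}$ — the parity corrections in \eqref{eq:gammaj0}--\eqref{eq:gammaj1} being exactly what make these indices even. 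Since the inner multiplications are not themselves of the form $\gan\circ(m\otimes\mathrm{id})$, I would compute them by pairing against an auxiliary test argument (a functional when the output lies in $A$, an algebra element when it lies in $A^{\#}[-1]$) and invoking \eqref{eq:mun} once more; this reduces each $\gamma_{j,\bullet}^{\ell,\ell',\ell''}$ to nested applications of $\operatorname{ev}_{k}\circ\mathscr{M}_{k}$.

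The core of the argument is the repeated evaluation of $\mathscr{M}_{k}$ through \eqref{eq:mu2}. For each occurrence one must identify the canonical cyclic representative of the argument in $B_{\bar{i}}^{\circ}$, that is, decide which of its two $A$-slots is preceded by the shorter block of $t$-factors and hence plays the role of ``$a$'' in \eqref{eq:mu2}. This produces a case split mirroring exactly the trichotomy in \eqref{eq:sigmaf} — $\ell=1$; $\ell>1$ odd, or $\ell>1$ even with $j>1$; $\ell$ even with $j=1$ — and an analogous split for the outer multiplication and for the boundary term, which is the origin of the two cases in \eqref{eq:tildesigmaf1}. After applying $\operatorname{ev}_{k}$, every uncontracted $t$-factor is replaced by its value at $1_{A}$, so each $\gamma_{j,\bullet}^{\ell,\ell',\ell''}$ becomes a sum of at most four terms, each of the shape $C_{\bullet,\bullet}C_{\bullet,\bullet}\,\kappa(\Omega)$ — one binomial/Bernoulli constant from each of the two applications of $\mathscr{M}$, and $\kappa(\Omega)$ from $\operatorname{ev}$ on the remaining functionals — times a monomial in $f_{1},f_{\ell},g_{1},g_{\ell'},h_{1},h_{\ell''}$ evaluated at $a$, $b$, $c$, products of these, or $1_{A}$.

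Finally I would resum over $j$. Because the inner multiplication in $\gamma_{j,1}^{\ell,\ell',\ell''}$ absorbs one more $tg$ than that in $\gamma_{j,0}^{\ell,\ell',\ell''}$, which in turn absorbs one fewer than that in $\gamma_{j+1,0}^{\ell,\ell',\ell''}$, the bulk of the monomials produced in the previous step should cancel in pairs — telescopically across consecutive $j$, and by direct cancellation within each individual $\gamma_{j,\bullet}^{\ell,\ell',\ell''}$. What I expect to survive are the contributions at the two ends of the range of $j$, which should reorganize with the correct coefficients into $\sum_{j}\bar{\beta}_{j,f}^{\ell,\ell',\ell''}\,\Sigma_{j,f}^{\ell,\ell',\ell''}(\Omega)\big(g_{\ell'}(c)h_{1}(1_{A})-g_{\ell'}(1_{A})h_{1}(c)\big)$, together with — only when $\ell-\ell'$ is odd — the extra diagonal term coming from the boundary correction in \eqref{eq:sin+1fbis}, whose scalar coefficient collapses to $\bar{\delta}_{0}^{\ell,\ell',\ell''}=C_{1,\ell''}C_{\ell,\ell'}\kappa(\Omega)$ and whose functional part is $\tilde{\Sigma}_{f}^{\ell,\ell',\ell''}(\Omega)\,h_{1}(1_{A})$; this is exactly \eqref{eq:gamma0-1bisbisbis}. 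The hard part will be purely organizational rather than conceptual: keeping exact track of the Koszul signs arising from the cyclic coinvariants, together with the signs in \eqref{eq:ip1} and \eqref{eq:mun}, and handling uniformly the numerous small sub-cases (small values of $\ell$, $\ell'$, $\ell''$, the parities of $\ell$ and $\ell'$, and the extremal values of $j$) without dropping or double-counting a single monomial.
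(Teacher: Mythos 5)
Your framework is the right one and is essentially the paper's: start from \eqref{eq:sin+1fbis}, note by the degree count that the inner multiplication of $\gamma_{j,0}^{\ell,\ell',\ell''}$ lands in $A$ and that of $\gamma_{j,1}^{\ell,\ell',\ell''}$ in $A^{\#}[-1]$, evaluate everything through \eqref{eq:mun}, \eqref{eq:mu2} and \eqref{eq:ev} after choosing the canonical cyclic representative, and split into the cases that produce \eqref{eq:sigmaf} and \eqref{eq:tildesigmaf1}. The gap is in the mechanism you propose for collecting the monomials: there is no telescoping across consecutive $j$, and no collapse to "the two ends of the range of $j$". Indeed, the common prefactor of $\gamma_{j,1}^{\ell,\ell',\ell''}$ is $C_{\ell'-2j+2-\mathcalboondox{p}_{\ell},\ell''}\,C_{\ell,2j-1+\mathcalboondox{p}_{\ell}}$ while that of $\gamma_{j+1,0}^{\ell,\ell',\ell''}$ is $C_{\ell'-2j-\mathcalboondox{p}_{\ell},\ell''}\,C_{\ell,2j+1+\mathcalboondox{p}_{\ell}}$, so identical monomials appearing in both cannot cancel; moreover, for $\ell>1$ (and $j>1$ when $\ell$ is even) the surviving block $\Sigma_{j,f}^{\ell,\ell',\ell''}(\Omega)$ does not depend on $j$ at all, so the $j$-summands add up with genuinely different coefficients $\bar{\beta}_{j,f}^{\ell,\ell',\ell''}$ rather than cancel — which is also forced by the target \eqref{eq:gamma0-1bisbisbis} itself, a full sum over $j$, incompatible with your prediction that only boundary contributions survive.

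The cancellation that actually drives the proof happens entirely within each fixed $j$, and this is the step your plan is missing. Expanding only the \emph{outer} multiplication via \eqref{eq:mun}: the terms of $\gamma_{j,0}^{\ell,\ell',\ell''}$ in which the inner output is absorbed by $tg_{2j-1+\mathcalboondox{p}_{\ell}}$ cancel against the terms of $\gamma_{j,1}^{\ell,\ell',\ell''}$ in which $a$ is absorbed by the inner (dual) output — these match by the cyclicity \eqref{eq:ip1} — while the terms of $\gamma_{j,1}^{\ell,\ell',\ell''}$ in which the inner output is left to be evaluated at $1_{A}$ vanish by Fact \ref{fact:cyc2}. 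What survives is $C_{\ell'-2j+2-\mathcalboondox{p}_{\ell},\ell''}$ times a product of evaluations at $1_A$ times $\gan\big(m_{\ell+2j+\mathcalboondox{p}_{\ell}}(a,tf_{1},\dots,tf_{\ell},b,tg_{1},\dots,tg_{2j-2+\mathcalboondox{p}_{\ell}}),th_{\ell''}\big)\big(g_{\ell'}(c)h_{1}(1_{A})-g_{\ell'}(1_{A})h_{1}(c)\big)$, and a \emph{second} application of \eqref{eq:mun} converts this into $\bar{\beta}_{j,f}^{\ell,\ell',\ell''}\Sigma_{j,f}^{\ell,\ell',\ell''}(\Omega)\big(g_{\ell'}(c)h_{1}(1_{A})-g_{\ell'}(1_{A})h_{1}(c)\big)$; no cross-$j$ bookkeeping is needed. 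Two further inaccuracies: when $\mathcalboondox{p}_{\ell-\ell'}=1$ the boundary difference contributes \emph{both} the $j=\lfloor \ell'/2\rfloor+\mathcalboondox{i}_{\ell}\mathcalboondox{p}_{\ell'}$ summand of the sum and the extra term $\bar{\delta}_{0}^{\ell,\ell',\ell''}\tilde{\Sigma}_{f}^{\ell,\ell',\ell''}(\Omega)h_{1}(1_{A})$, not only the latter as you state; and "at most four terms" per $\gamma_{j,\bullet}^{\ell,\ell',\ell''}$ undercounts the nested expansion (up to sixteen monomials before the cancellations just described).
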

\begin{proof}
Using definition \eqref{eq:mun} for $m_{n-\ell-2j+2-\mathcalboondox{p}_{\ell}}$ in \eqref{eq:gammaj0}-\eqref{eq:gammaj1}, the cyclicity of the higher operations and the fact they are $1_{A}$-normalized, we see that $\gamma_{j,0}^{\ell,\ell',\ell''} - \gamma_{j,1}^{\ell,\ell',\ell''}$ equals 
\begin{small}
\begin{equation}
\label{eq:gammaj1-2}
     \beta_{j,f}^{\ell,\ell',\ell''} \gan\big(m_{\ell+2j+\mathcalboondox{p}_{\ell}}(a,tf_{1}, \dots, tf_{\ell},b,tg_{1},\dots, tg_{2j-2+\mathcalboondox{p}_{\ell}}),t h_{\ell''} \big) \big(g_{\ell'}(c) h_{1}(1_{A}) - g_{\ell'}(1_{A})h_{1}(c)\big), 
\end{equation}
\end{small}
\hskip -0.8mm for all $j \in \{ 1, \dots, \lfloor \ell'/2 \rfloor + \mathcalboondox{i}_{\ell} \mathcalboondox{p}_{\ell'} - \mathcalboondox{p}_{\ell-\ell'} \}$, where 
\[     \beta_{j,f}^{\ell,\ell',\ell''} = 
C_{\ell'-2j+2-\mathcalboondox{p}_{\ell},\ell''} \prod_{i=2j-1+\mathcalboondox{p}_{\ell}}^{\ell'-1} g_{i}(1_{A}) \prod_{i=2}^{\ell''-1} h_{i}(1_{A}).     \]

Using definition \eqref{eq:mun} for $m_{\ell+2j+\mathcalboondox{p}_{\ell}}$, we see that \eqref{eq:gammaj1-2} is given by 
\begin{equation}
\label{eq:gammaj1-2bis}
     \bar{\beta}_{j,f}^{\ell,\ell',\ell''} \Sigma_{j,f}^{\ell,\ell',\ell''}(\Omega) \big(g_{\ell'}(c) h_{1}(1_{A}) - g_{\ell'}(1_{A}) h_{1}(c)\big), 
\end{equation}
for all $j \in \{ 1, \dots, \lfloor \ell'/2 \rfloor + \mathcalboondox{i}_{\ell} \mathcalboondox{p}_{\ell'} - \mathcalboondox{p}_{\ell-\ell'} \}$. 

If $\mathcalboondox{p}_{\ell-\ell'} = 1$, using definition \eqref{eq:mun} for $m_{n-\ell-2j+2-\mathcalboondox{p}_{\ell}}$, we see that $\gamma_{\lfloor \ell'/2 \rfloor + \mathcalboondox{i}_{\ell} \mathcalboondox{p}_{\ell'},0}^{\ell,\ell',\ell''}$ is written as a sum of four nonzero terms, whereas  $\gamma_{\lfloor \ell'/2 \rfloor + \mathcalboondox{i}_{\ell} \mathcalboondox{p}_{\ell'},1}^{\ell,\ell',\ell''}$ is written as a sum of three nonzero terms. 
Moreover, one of the four terms of $\gamma_{\lfloor \ell'/2 \rfloor + \mathcalboondox{i}_{\ell} \mathcalboondox{p}_{\ell'},0}^{\ell,\ell',\ell''}$ cancels one of the three terms of $\gamma_{\lfloor \ell'/2 \rfloor + \mathcalboondox{i}_{\ell} \mathcalboondox{p}_{\ell'},1}^{\ell,\ell',\ell''}$, and 
two of the remaining three nonzero terms of $\gamma_{\lfloor \ell'/2 \rfloor + \mathcalboondox{i}_{\ell} \mathcalboondox{p}_{\ell'},0}^{\ell,\ell',\ell''}$ give exactly \eqref{eq:gammaj1-2} for $j = \lfloor \ell'/2 \rfloor + \mathcalboondox{i}_{\ell} \mathcalboondox{p}_{\ell'}$, so \eqref{eq:gammaj1-2bis} for that value of $j$. 
The remaining three nonzero terms of $\gamma_{\lfloor \ell'/2 \rfloor + \mathcalboondox{i}_{\ell} \mathcalboondox{p}_{\ell'},0}^{\ell,\ell',\ell''} - \gamma_{\lfloor \ell'/2 \rfloor + \mathcalboondox{i}_{\ell} \mathcalboondox{p}_{\ell'},1}^{\ell,\ell',\ell''}$ give 
\begin{equation}
\label{eq:gammajbisbis}
\begin{split}
     \delta_{0} &\Big(-\gan\big(c.m_{\ell+\ell'+1}(a,tf_{1},\dots,tf_{\ell},b,tg_{1},\dots,tg_{\ell'-1}), t g_{\ell'}\big) h_{\ell''}(1_{A}) 
     \\
     &+ \gan\big(m_{\ell+\ell'+1}(ca,tf_{1},\dots,tf_{\ell},b,tg_{1},\dots,tg_{\ell'-1}),t g_{\ell'} \big) h_{\ell''}(1_{A}) 
     \\
     &- \gan\big(m_{\ell+\ell'+1}(c,tf_{1},\dots,tf_{\ell},b,tg_{1},\dots,tg_{\ell'-1}),t g_{\ell'} \big) h_{\ell''}(a) \Big),     
\end{split}     
\end{equation}
where 
\[     \delta_{0} = C_{1,\ell''} \prod_{i=1}^{\ell''-1} h_{i}(1_{A}).     \]

Finally, using definition \eqref{eq:mun} for $m_{\ell+\ell'+1}$ in \eqref{eq:gammajbisbis}, we see that the latter expression is $\bar{\delta}_{0}^{\ell,\ell',\ell''}$ times $\tilde{\Sigma}_{f}^{\ell,\ell',\ell''}(\Omega) h_{1}(1_{A})$. 
The claimed expression of $\operatorname{SI}(n+1)_{\gan}^{f}(\omega)$ follows from \eqref{eq:sin+1fbis} and the previous computations. 
\end{proof}

\begin{fact}
\label{fact:sil}
Assume the terminology we introduced in Subsubsection \ref{subsubsec:generalsetting} and after Lemma \ref{lemma:sioddl1}. 
Recall that $\bar{\delta}_{0}^{\ell',\ell'',\ell} = C_{1,\ell} C_{\ell',\ell''} \kappa(\Omega)$, where $\kappa(\Omega)$ was defined in \eqref{eq:kappa}. 
Then, $\operatorname{SI}(n+1)^{l}_{\gan}(\omega)$ is precisely 
\begin{equation}
\label{eq:gammal+1-l+2bisbisbis}
         \sum_{j=1}^{\lfloor \ell''/2 \rfloor} \bar{\beta}_{j,l}^{1,\ell',\ell''} \Big( \Sigma_{j,l}^{1,\ell',\ell''}(\Omega) h_{\ell''}(a) 
        - \Sigma_{j,l}^{1,\ell',\ell''}(\Omega'') h_{\ell''}(1_{A})\Big)         
\end{equation}
if $\ell = 1$, and
\begin{equation}
\label{eq:gammal+1-l+2bisbisbis2}
\begin{split}
        (-1)^{\ell+1} \bigg(&\mathcalboondox{p}_{\ell'-\ell''} \bar{\delta}_{0}^{\ell',\ell'',\ell} f_{\ell}(1_{A}) \tilde{\Sigma}_{l}^{\ell,\ell',\ell''}(\Omega) 
        \\ 
        &+ \sum_{j=1}^{\lfloor \ell''/2 \rfloor + \mathcalboondox{i}_{\ell'} \mathcalboondox{p}_{\ell''}} \bar{\beta}_{j,l}^{\ell,\ell',\ell''} \Sigma_{j,l}^{\ell,\ell',\ell''}(\Omega) \big(f_{1}(1_{A}) h_{\ell''}(a) - f_{1}(a) h_{\ell''}(1_{A})\big)\bigg)     
        \end{split}     
\end{equation}
if $\ell > 1$, where 
\[     \bar{\beta}_{j,l}^{\ell,\ell',\ell''} = C_{\ell, \ell''-2j+2-\mathcalboondox{p}_{\ell'}} C_{\ell',2j-1+\mathcalboondox{p}_{\ell'}} \kappa(\Omega),       \]  
$\Omega''$ is given by 
\[     (1_{A}, a \cdot tf_{1}, tf_{2}, \dots, tf_{\ell}, b, tg_{1}, \dots, tg_{\ell'}, c, th_{1}, \dots, th_{\ell''}),     \]
$\Sigma_{j,l}^{\ell,\ell',\ell''}(\Omega)$ is given by 
\begin{small}
\begin{equation}
\label{eq:sigmal}
      \begin{cases}
f_{\ell}(b) g_{1}(c) h_{1}(1_{A}) + f_{\ell}(1_{A}) g_{1}(b) h_{1}(c)           &\text{}
\\
- f_{\ell}(1_{A}) g_{1}(c b) h_{1}(1_{A}) - f_{\ell}(b) g_{1}(1_{A}) h_{1}(c),  &\text{if $\ell' = 1$;}
\\
&\text{}
\\
\big( f_{\ell}(b) g_{1}(1_{A}) - f_{\ell}(1_{A}) g_{1}(b) \big)
\big( g_{\ell'}(c) h_{1}(1_{A}) - g_{\ell'}(1_{A}) h_{1}(c) \big),  &\text{if $\ell' > 1$ odd, or}
\\
&\text{$\ell'$ even and $j > 1$;}
\\
\big(f_{\ell}(b) g_{1}(1_{A}) g_{\ell'}(c) + f_{\ell}(c) g_{1}(b) g_{\ell'}(1_{A}) &\text{}
\\
- f_{\ell}(b c) g_{1}(1_{A}) g_{\ell'}(1_{A}) - f_{\ell}(1_{A}) g_{1}(b) g_{\ell'}(c) \big) h_{1}(1_{A}),  &\text{if $\ell'$ even and $j = 1$;}
 \end{cases}
\end{equation}
\end{small}
\hskip -0.8mm and $\tilde{\Sigma}_{l}^{\ell,\ell',\ell''}(\Omega)$ is given by 
 \begin{equation}
 \label{eq:tildesigmal}
 \begin{split}
&\Big[ f_{1}(a) \big(g_{1}(1_{A}) h_{\ell''}(b)  - g_{1}(b) h_{\ell''}(1_{A}) \big) - f_{1}(1_{A}) \big(g_{1}(1_{A}) h_{\ell''}(a b)  - g_{1}(b) h_{\ell''}(a) \big) \Big]         
\\
&\big( g_{\ell'}(c) h_{1}(1_{A}) - g_{\ell'}(1_{A}) h_{1}(c) \big).  
\end{split}
\end{equation}
\end{fact}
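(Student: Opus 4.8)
The argument will follow, mutatis mutandis, the pattern of the proof of Fact \ref{fact:sif}; the one structural novelty is that the first algebra entry $a$ of $\Omega$ now lies outside every inner multiplication occurring in $\operatorname{SI}(n+1)^{l}_{\gan}(\omega)$ (cases \ref{item:T3} and \ref{item:T4} never let $m_{2k}$ swallow $a$), and it is this feature that forces the two separate formulas and, in particular, the appearance of the auxiliary element $\Omega''$ when $\ell = 1$. The plan is to start from the expression \eqref{eq:sin+1lbis} of $\operatorname{SI}(n+1)^{l}_{\gan}(\omega)$ in terms of the quantities $\gamma_{j,\ell+1}^{\ell,\ell',\ell''}$ and $\gamma_{j,\ell+2}^{\ell,\ell',\ell''}$ of \eqref{eq:gammajl+1} and \eqref{eq:gammajl+2}, and to evaluate each of these by unwinding the definition \eqref{eq:mun} of the multiplications $m_{n}$ involved, using the cyclicity property \eqref{eq:ip1} of the higher operations and the fact that they are $1_{A}$-normalized.

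First I would fix an index $j$ in the main part of the sum in \eqref{eq:sin+1lbis} and expand the \emph{outer} multiplication $m_{n-\ell'-2j+2-\mathcalboondox{p}_{\ell'}}$ appearing in $\gamma_{j,\ell+1}^{\ell,\ell',\ell''} - \gamma_{j,\ell+2}^{\ell,\ell',\ell''}$ by means of \eqref{eq:mun}, \eqref{eq:mu} and \eqref{eq:mu2}; after discarding the summands that vanish by $1_{A}$-normalization, this difference should collapse to a single term, consisting of a coefficient built from the $\kappa(\Omega)$-type products and one binomial (the analogue of the auxiliary coefficient $\beta_{j,f}^{\ell,\ell',\ell''}$ in the proof of Fact \ref{fact:sif}), a pairing $\gan\big(m_{\ell'+2j+\mathcalboondox{p}_{\ell'}}(b,tg_{1},\dots,tg_{\ell'},c,th_{1},\dots,th_{2j-2+\mathcalboondox{p}_{\ell'}}),\, \text{(one dual entry)}\big)$, and a two-term numerical factor. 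A further expansion, this time of the \emph{inner} multiplication $m_{\ell'+2j+\mathcalboondox{p}_{\ell'}}$ via \eqref{eq:mun}, should reproduce the expressions $\Sigma_{j,l}^{\ell,\ell',\ell''}$ of \eqref{eq:sigmal}, whose three shapes correspond to whether $\ell' = 1$, $\ell' > 1$ is odd (or $\ell'$ is even and $j > 1$), or $\ell'$ is even and $j = 1$ — the last being the subcase in which the inner multiplication's canonical $\mathscr{M}$-representative has a dual block of length $1$ before its first algebra entry, so that the $i = 1$ clause of \eqref{eq:mu2} contributes nontrivially. When $\ell > 1$ the two-term factor will turn out to be $f_{1}(1_{A})h_{\ell''}(a) - f_{1}(a)h_{\ell''}(1_{A})$ (the entry $a$ being recovered from one of the two distinguished slots of $\mathscr{M}$, according to which of the two dual blocks flanking the inner-multiplication slot is the shorter), giving the bulk summands $\bar{\beta}_{j,l}^{\ell,\ell',\ell''}\,\Sigma_{j,l}^{\ell,\ell',\ell''}(\Omega)\big(f_{1}(1_{A})h_{\ell''}(a) - f_{1}(a)h_{\ell''}(1_{A})\big)$ of \eqref{eq:gammal+1-l+2bisbisbis2}; when $\ell = 1$ the block $tf_{1}$ has length $1$ and $\mathscr{M}$, applied through its $i = 1$ clause, either absorbs $a$ directly into $tf_{1}$ or leaves it free, which produces precisely $\bar{\beta}_{j,l}^{1,\ell',\ell''}\big(\Sigma_{j,l}^{1,\ell',\ell''}(\Omega)h_{\ell''}(a) - \Sigma_{j,l}^{1,\ell',\ell''}(\Omega'')h_{\ell''}(1_{A})\big)$ of \eqref{eq:gammal+1-l+2bisbisbis}, the passage $\Omega \rightsquigarrow \Omega''$ encoding the replacement of $tf_{1}$ by $a\cdot tf_{1}$ and of $a$ by $1_{A}$.

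Next I would treat the boundary term $\mathcalboondox{p}_{\ell'-\ell''}\,\gamma_{\lfloor \ell''/2 \rfloor+\mathcalboondox{i}_{\ell'}\mathcalboondox{p}_{\ell''},\ell+1}^{\ell,\ell',\ell''}$ of \eqref{eq:sin+1lbis}. It is absent when $\ell = 1$: indeed $\ell+\ell'+\ell''=n-1$ is odd, so $\ell = 1$ forces $\ell'$ and $\ell''$ to have the same parity and hence $\mathcalboondox{p}_{\ell'-\ell''}=0$. When $\ell>1$ and $\mathcalboondox{p}_{\ell'-\ell''}=1$ I would proceed exactly as for the analogous boundary term in Fact \ref{fact:sif}: unwinding \eqref{eq:mun} once writes $\gamma_{\lfloor \ell''/2 \rfloor+\mathcalboondox{i}_{\ell'}\mathcalboondox{p}_{\ell''},\ell+1}^{\ell,\ell',\ell''}$ as a sum of four nonzero summands, of which two supply the missing $j = \lfloor \ell''/2 \rfloor+\mathcalboondox{i}_{\ell'}\mathcalboondox{p}_{\ell''}$ term of the bulk sum (extending its range as in \eqref{eq:gammal+1-l+2bisbisbis2}), one cancels a summand of the preceding $\gamma_{j,\ell+2}^{\ell,\ell',\ell''}$-type contribution, and the last three, after one more use of \eqref{eq:mun}, combine into $\bar{\delta}_{0}^{\ell',\ell'',\ell}\,f_{\ell}(1_{A})\,\tilde{\Sigma}_{l}^{\ell,\ell',\ell''}(\Omega)$ with $\tilde{\Sigma}_{l}^{\ell,\ell',\ell''}$ as in \eqref{eq:tildesigmal}. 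Collecting all contributions, together with the overall sign $-(-1)^{\ell}$ read off from \eqref{eq:sin+1lbis}, then yields \eqref{eq:gammal+1-l+2bisbisbis} for $\ell = 1$ and \eqref{eq:gammal+1-l+2bisbisbis2} for $\ell > 1$.

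The hard part, exactly as in Fact \ref{fact:sif}, is purely combinatorial: in each of the numerous subcases one must (i) identify the canonical cyclic representative on which $\mathscr{M}$ acts — that is, decide which of the two dual blocks flanking an algebra entry is the shorter, this being precisely the data packaged in the parity functions $\mathcalboondox{p}_{\bullet}$, $\mathcalboondox{i}_{\bullet}$ and the floor/ceiling bounds appearing throughout — and (ii) verify the many pairwise cancellations among the four-term expansions produced by \eqref{eq:mu2}, keeping careful track of the fact that, unlike in Fact \ref{fact:sif}, $a$ is never swallowed by an inner multiplication and so must always be restored by the outer one, either from a distinguished $\mathscr{M}$-slot (when $\ell > 1$) or through the substitution $\Omega \rightsquigarrow \Omega''$ (when $\ell = 1$). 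No ingredient beyond Lemma \ref{lemma:si4}, the cyclicity \eqref{eq:ip1}, $1_{A}$-normalization and the definitions \eqref{eq:newcij}, \eqref{eq:mu2}, \eqref{eq:mun} should be needed.
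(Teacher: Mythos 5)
Your outline follows the paper's own proof essentially verbatim: expand the outer multiplication in $\gamma_{j,\ell+1}^{\ell,\ell',\ell''}-\gamma_{j,\ell+2}^{\ell,\ell',\ell''}$ via \eqref{eq:mun}, then the inner one, split according to $\ell=1$ versus $\ell>1$ (with $\Omega''$ encoding the absorption of $a$ into $tf_{1}$ in the first case), and note that the boundary term of \eqref{eq:sin+1lbis} can only occur for $\ell$ even because $\ell+\ell'+\ell''=n-1$ is odd. All of this, including the three shapes of $\Sigma_{j,l}^{\ell,\ell',\ell''}$ and the final collection of signs, matches the paper's argument.

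The one place your description goes astray is the bookkeeping of the boundary term $\mathcalboondox{p}_{\ell'-\ell''}\,\gamma_{\lfloor\ell''/2\rfloor+\mathcalboondox{i}_{\ell'}\mathcalboondox{p}_{\ell''},\ell+1}^{\ell,\ell',\ell''}$: you transplanted the cancellation pattern of Fact \ref{fact:sif}, where the boundary is a difference $\gamma_{\cdot,0}-\gamma_{\cdot,1}$ contributing $4+3$ summands of which one pair cancels. Here the boundary is a single $\gamma_{\cdot,\ell+1}$, which \eqref{eq:mun} expands into exactly four nonzero summands, so your proposed split (two for the extra bulk index, one cancelling against a preceding $\gamma_{j,\ell+2}$-type contribution, and ``the last three'' for $\tilde{\Sigma}_{l}$) accounts for six pieces out of four and invokes a cancellation that does not occur. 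The correct split is simply two plus two: two summands supply the missing index $j=\lfloor\ell''/2\rfloor+\mathcalboondox{i}_{\ell'}\mathcalboondox{p}_{\ell''}$ of the bulk sum, and the remaining two, after one further application of \eqref{eq:mun} (using $\ell',\ell''\geq\ell\geq 2$), combine into $\bar{\delta}_{0}^{\ell',\ell'',\ell}\,f_{\ell}(1_{A})\,\tilde{\Sigma}_{l}^{\ell,\ell',\ell''}(\Omega)$ as in \eqref{eq:tildesigmal}. This is a local slip rather than a failure of method (Lemma \ref{lemma:si4} is in fact not needed here either); with the count corrected, your argument is the paper's.
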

\begin{proof}
From definition \eqref{eq:mun} for $m_{n-\ell'-2j+2-\mathcalboondox{p}_{\ell'}}$ in \eqref{eq:gammajl+1}-\eqref{eq:gammajl+2}, the cyclicity of the higher operations and the fact they are $1_{A}$-normalized, we see that $\gamma_{j,\ell+1}^{\ell,\ell',\ell''} - \gamma_{j,\ell+2}^{\ell,\ell',\ell''}$ equals 
\begin{small}
\begin{equation}
\label{eq:gammajl+1-l+2}
     \begin{cases} 
         \Big( f_{1}\big(m_{\ell'+2j+\mathcalboondox{p}_{\ell'}}(b,tg_{1},\dots, tg_{\ell'}, c, th_{1}, \dots, th_{2j-2+ \mathcalboondox{p}_{\ell'}} )\big) h_{\ell''}(a) &\text{}
     \\
     - f_{1}\big(m_{\ell'+2j+\mathcalboondox{p}_{\ell'}}(b,tg_{1},\dots, tg_{\ell'}, c, th_{1}, \dots, th_{2j-2+ \mathcalboondox{p}_{\ell'}} ) a \big) h_{\ell''}(1_{A})\Big) \beta_{j,l}^{1,\ell',\ell''}, &\text{if $\ell = 1$, } 
          \\ &\text{}
     \\
     f_{\ell}\big(m_{\ell'+2j+\mathcalboondox{p}_{\ell'}}(b,tg_{1},\dots, tg_{\ell'}, c, th_{1}, \dots, th_{2j-2+ \mathcalboondox{p}_{\ell'}} )\big) &\text{}
     \\
     \big(f_{1}(1_{A}) h_{\ell''}(a) - f_{1}(a) h_{\ell''}(1_{A})\big) \beta_{j,l}^{\ell,\ell',\ell''}, &\text{if $\ell > 1$, }
     \end{cases}
\end{equation}
\end{small}
\hskip -0.8mm for all $j \in \{ 1, \dots, \lfloor \ell''/2 \rfloor + \mathcalboondox{i}_{\ell'} \mathcalboondox{p}_{\ell''} - \mathcalboondox{p}_{\ell'-\ell''} \}$, where 
\[     \beta_{j,l}^{\ell,\ell',\ell''} = C_{\ell, \ell''-2j+2-\mathcalboondox{p}_{\ell'}} \prod_{i=2}^{\ell-1} f_{i}(1_{A}) \prod_{i=2j-1+\mathcalboondox{p}_{\ell'}}^{\ell''-1} h_{i}(1_{A}).     \] 

Using definition \eqref{eq:mun} for $m_{\ell'+2j+\mathcalboondox{p}_{\ell'}}$, we see that \eqref{eq:gammajl+1-l+2} is given by 
\begin{small}
\begin{equation}
\label{eq:gammajbisl+1-l+2}
     \begin{cases} 
         \Big( \Sigma_{j,l}^{1,\ell',\ell''}(\Omega) h_{\ell''}(a) - \Sigma_{j,l}^{1,\ell',\ell''}(\Omega'') h_{\ell''}(1_{A})\Big) \bar{\beta}_{j,l}^{1,\ell',\ell''}, &\text{if $\ell = 1$,} 
         \\
    &\text{}
     \\
     \Sigma_{j,l}^{\ell,\ell',\ell''}(\Omega) \big(f_{1}(1_{A}) h_{\ell''}(a) - f_{1}(a) h_{\ell''}(1_{A})\big) \bar{\beta}_{j,l}^{\ell,\ell',\ell''},  &\text{if $\ell > 1$,}
     \end{cases}
\end{equation}
\end{small}
\hskip -0.8mm for all $j \in \{ 1, \dots, \lfloor \ell''/2 \rfloor + \mathcalboondox{i}_{\ell'} \mathcalboondox{p}_{\ell''} - \mathcalboondox{p}_{\ell'-\ell''} \}$. 

If $\mathcalboondox{p}_{\ell'-\ell''} = 1$, definition \eqref{eq:mun} for $m_{n-\ell'-2j+2-\mathcalboondox{p}_{\ell'}}$ tells us that $\gamma_{\lfloor \ell''/2 \rfloor + \mathcalboondox{i}_{\ell'} \mathcalboondox{p}_{\ell''},\ell+1}^{\ell,\ell',\ell''}$ is written as a sum of four nonzero terms. 
Note that in this case we can assume that $\ell \geq 1$ is even. 
Indeed, since $\ell + \ell' + \ell'' = n -1$ is odd, $\ell'$ and $\ell''$ must have the same parity if $\ell$ is odd, \textit{i.e.} $\mathcalboondox{p}_{\ell'-\ell''}= 0$ if $\ell$ is odd. 
Moreover, two of the four nonzero terms give exactly \eqref{eq:gammajl+1-l+2} for $j = \lfloor \ell''/2 \rfloor + \mathcalboondox{i}_{\ell'} \mathcalboondox{p}_{\ell''}$, so \eqref{eq:gammajbisl+1-l+2} for that value of $j$. 
The remaining two nonzero terms of $\gamma_{\lfloor \ell''/2 \rfloor + \mathcalboondox{i}_{\ell'} \mathcalboondox{p}_{\ell''},\ell+1}^{\ell,\ell',\ell''}$ give 
\begin{equation}
\label{eq:gammajl+1bisbis}
\begin{split}
     \xi & \Big(f_{1}(a) \gan\big(m_{\ell'+\ell''+1}(b,tg_{1},\dots,tg_{\ell'},c,th_{1},\dots,th_{\ell''-1}), th_{\ell''}\big) 
     \\
     &- f_{1}(1_{A}) \gan\big(a.m_{\ell'+\ell''+1}(b,tg_{1},\dots,tg_{\ell'},c,th_{1},\dots,th_{\ell''-1}), t h_{\ell''}\big) \Big),     
\end{split}     
\end{equation}
where 
\[     \xi = C_{1,\ell} \prod_{i=2}^{\ell} f_{i}(1_{A}).     \]
Using definition \eqref{eq:mun} for $m_{\ell'+\ell''+1}$ in \eqref{eq:gammajl+1bisbis}, we see that the latter expression is $\bar{\delta}_{0}^{\ell',\ell'',\ell}$ times $f_{\ell}(1_{A}) \tilde{\Sigma}_{l}^{\ell,\ell',\ell''}(\Omega)$.
Note that we have used that $\ell', \ell'' \geq \ell \geq 2$. 
The claimed expression of $\operatorname{SI}(n+1)_{\gan}^{l}(\omega)$ follows from \eqref{eq:sin+1lbis} and the previous computations. 
\end{proof}

\begin{fact}
\label{fact:sim}
Assume the terminology we introduced in Subsubsection \ref{subsubsec:generalsetting} and after Lemma \ref{lemma:sioddl1}. 
Suppose further that $\ell > 1$.
Recall that $\bar{\delta}_{0}^{\ell,\ell',\ell''} = C_{1,\ell''} C_{\ell,\ell'} \kappa(\Omega)$, where $\kappa(\Omega)$ was defined in \eqref{eq:kappa}. 
Then, $\operatorname{SI}(n+1)_{\gan}^{m}(\omega)$ is given by 
\begin{equation}
\label{eq:gammarbisbisbis}
\begin{split}
        &- \mathcalboondox{p}_{\ell-\ell'} \bar{\delta}_{0}^{\ell,\ell',\ell''} \tilde{\Sigma}_{f}^{\ell,\ell',\ell''}(\Omega) h_{1}(1_{A})
        \\ 
        &+ (-1)^{\mathcalboondox{p}_{\ell-\ell'}} \sum_{j=\mathcalboondox{i}_{\ell-\ell'}}^{\lfloor \ell/2 \rfloor -1+ \mathcalboondox{p}_{\ell} \mathcalboondox{p}_{\ell'}} \bar{\beta}_{j,m}^{\ell,\ell',\ell''} \Sigma_{m}^{\ell,\ell',\ell''}(\Omega) \big(f_{1}(1_{A}) h_{\ell''}(a)  - f_{1}(a) h_{\ell''}(1_{A})\big)
        \\
        &+(-1)^{\ell} \mathcalboondox{i}_{\ell'} \bar{\beta}_{1,l}^{\ell,\ell',\ell''} \Sigma_{1,l}^{\ell,\ell',\ell''}(\Omega) \big(f_{1}(1_{A}) h_{\ell''}(a)  - f_{1}(a) h_{\ell''}(1_{A})\big),     
        \end{split}     
\end{equation}
where 
\[     \bar{\beta}_{j,m}^{\ell,\ell',\ell''} = C_{2j+\mathcalboondox{p}_{\ell-\ell'},\ell''} C_{\ell',\ell-2j+1-\mathcalboondox{p}_{\ell-\ell'}} \kappa(\Omega),       \]   
$\Sigma_{m}^{\ell,\ell',\ell''}(\Omega)$ is given by 
\begin{equation}
\label{eq:sigmam}
\begin{split}
\big( f_{\ell}(b) g_{1}(1_{A}) - f_{\ell}(1_{A}) g_{1}(b) \big)
\big( g_{\ell'}(c) h_{1}(1_{A}) - g_{\ell'}(1_{A}) h_{1}(c) \big),   
 \end{split}
\end{equation}
and $\tilde{\Sigma}_{f}^{\ell,\ell',\ell''}(\Omega)$, $\bar{\beta}_{1,l}^{\ell,\ell',\ell''}$ and $\Sigma_{1,l}^{\ell,\ell',\ell''}(\Omega)$ were defined in Facts \ref{fact:sif} 
and \ref{fact:sil}, respectively.
\end{fact}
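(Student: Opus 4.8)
The argument is completely parallel to the proofs of Facts \ref{fact:sif} and \ref{fact:sil}, the only difference being that the ``internal'' higher multiplication is now inserted on the block containing the $f$'s. The starting point is the expression \eqref{eq:sin+1mbis} of $\operatorname{SI}(n+1)_{\gan}^{m}(\omega)$ in terms of the quantities $\gamma_{r}^{\ell,\ell',\ell''}$ of \eqref{eq:gammareven}--\eqref{eq:gammarodd}, together with the two boundary contributions $\mathcalboondox{p}_{\ell-\ell'}\gamma_{2}^{\ell,\ell',\ell''}$ and $(-1)^{\ell}\mathcalboondox{i}_{\ell'}\gamma_{\ell}^{\ell,\ell',\ell''}$. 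For each $\gamma_{r}^{\ell,\ell',\ell''}$ one expands the outer multiplication via \eqref{eq:mun}, uses the supersymmetry and cyclicity \eqref{eq:ip1} of $\gan$ and the fact that $m_{3}$ and all the $m_{2k}$ are $1_{A}$-normalized to discard the terms whose surviving inner arguments are units and to pair off the remaining ones, then expands the inner multiplication once more via \eqref{eq:mun}, and finally collects the binomial prefactors coming from \eqref{eq:newcij} into the coefficients $\bar\beta_{j,m}^{\ell,\ell',\ell''}$, $\bar{\delta}_{0}^{\ell,\ell',\ell''}$, $\bar\beta_{1,l}^{\ell,\ell',\ell''}$ and recognises the building blocks $\Sigma_{m}^{\ell,\ell',\ell''}$, $\tilde{\Sigma}_{f}^{\ell,\ell',\ell''}$, $\Sigma_{1,l}^{\ell,\ell',\ell''}$.

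Concretely, for the generic difference $\gamma_{2j+\mathcalboondox{p}_{\ell-\ell'}}^{\ell,\ell',\ell''}-\gamma_{2j+\mathcalboondox{p}_{\ell-\ell'}+1}^{\ell,\ell',\ell''}$ the two terms correspond to consecutive values of $r$, hence to the two cases \eqref{eq:gammareven} and \eqref{eq:gammarodd}; after the double expansion and normalization the unit evaluations $f_{i}(1_{A})$, $h_{i}(1_{A})$ combine into $\kappa(\Omega)$, the intermediate $g_{i}(1_{A})$'s supply the complementary factors, and what is left is $\Sigma_{m}^{\ell,\ell',\ell''}(\Omega)\big(f_{1}(1_{A})h_{\ell''}(a)-f_{1}(a)h_{\ell''}(1_{A})\big)$ multiplied by $C_{2j+\mathcalboondox{p}_{\ell-\ell'},\ell''}C_{\ell',\ell-2j+1-\mathcalboondox{p}_{\ell-\ell'}}$; this gives the middle sum of \eqref{eq:gammarbisbisbis}, with the overall sign $(-1)^{\mathcalboondox{p}_{\ell-\ell'}}$ tracking the parity of $r$. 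When $\mathcalboondox{p}_{\ell-\ell'}=1$ the extra term $\gamma_{2}^{\ell,\ell',\ell''}$ is of the ``odd'' type \eqref{eq:gammarodd}, and expanding it twice splits it into the $j=0$ instance of the generic term (with coefficient $C_{1,\ell''}C_{\ell',\ell}\kappa(\Omega)=\bar{\delta}_{0}^{\ell,\ell',\ell''}$) plus a residual group of terms which, by the definition of $\bar{\delta}_{0}^{\ell,\ell',\ell''}$, equals $-\bar{\delta}_{0}^{\ell,\ell',\ell''}\tilde{\Sigma}_{f}^{\ell,\ell',\ell''}(\Omega)h_{1}(1_{A})$ --- the opposite of the stray term produced in Fact \ref{fact:sif}, as is needed for the cancellation carried out in the proof of Lemma \ref{lemma:sioddl1}. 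Finally, when $\ell'$ is even, i.e. $\mathcalboondox{i}_{\ell'}=1$, the boundary term $(-1)^{\ell}\gamma_{\ell}^{\ell,\ell',\ell''}$ is of the ``even'' type \eqref{eq:gammareven} (since $\ell+\ell'-\ell=\ell'$ is even) and its double expansion yields precisely $(-1)^{\ell}\mathcalboondox{i}_{\ell'}\bar\beta_{1,l}^{\ell,\ell',\ell''}\Sigma_{1,l}^{\ell,\ell',\ell''}(\Omega)\big(f_{1}(1_{A})h_{\ell''}(a)-f_{1}(a)h_{\ell''}(1_{A})\big)$ in the notation of Fact \ref{fact:sil}. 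Summing the three contributions gives \eqref{eq:gammarbisbisbis}.

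The only genuine difficulty is the combinatorial bookkeeping: one has to track the parities of $\ell,\ell',\ell''$ and of $\ell-\ell'$, the exact summation ranges encoded through $\lfloor\cdot\rfloor$, $\lceil\cdot\rceil$, $\mathcalboondox{p}_{\bullet}$, $\mathcalboondox{i}_{\bullet}$, and, most delicately, which of the four corner terms in the expansion \eqref{eq:mu2} of each higher multiplication survive the $1_{A}$-normalization at each of the two expansion steps, so that the surviving binomial coefficients assemble exactly into $\bar\beta_{j,m}^{\ell,\ell',\ell''}$, $\bar{\delta}_{0}^{\ell,\ell',\ell''}$ and $\bar\beta_{1,l}^{\ell,\ell',\ell''}$. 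No ingredient beyond those already used in Facts \ref{fact:sif} and \ref{fact:sil} is required; the whole content is a careful but routine computation.
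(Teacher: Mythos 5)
Your outline reproduces the paper's own proof step for step: starting from \eqref{eq:sin+1mbis}, one expands the outer multiplication of each $\gamma^{\ell,\ell',\ell''}_{r}$ via \eqref{eq:mun} (using cyclicity, Fact \ref{fact:cyc2} and $1_{A}$-normalization) and then the inner one, so that the generic differences give $\bar{\beta}_{j,m}^{\ell,\ell',\ell''}\Sigma_{m}^{\ell,\ell',\ell''}(\Omega)\big(f_{1}(1_{A})h_{\ell''}(a)-f_{1}(a)h_{\ell''}(1_{A})\big)$, the extra term $\gamma_{2}^{\ell,\ell',\ell''}$ (when $\mathcalboondox{p}_{\ell-\ell'}=1$) splits into the $j=0$ generic piece plus $-\bar{\delta}_{0}^{\ell,\ell',\ell''}\tilde{\Sigma}_{f}^{\ell,\ell',\ell''}(\Omega)h_{1}(1_{A})$, and $\gamma_{\ell}^{\ell,\ell',\ell''}$ (when $\ell'$ is even) gives the $\bar{\beta}_{1,l}^{\ell,\ell',\ell''}\Sigma_{1,l}^{\ell,\ell',\ell''}(\Omega)$ contribution, exactly as in \eqref{eq:gammar}, \eqref{eq:gammarbis}, \eqref{eq:tildesigmampre} and \eqref{eq:gammarl}. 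The only point to watch is the sign of the absorbed $j=0$ piece: the two surviving terms of $\gamma_{2}^{\ell,\ell',\ell''}$ reproduce \emph{minus} \eqref{eq:gammarbis} at $j=0$, which is what makes it consistent with the prefactor $(-1)^{\mathcalboondox{p}_{\ell-\ell'}}=-1$ in front of the sum in \eqref{eq:gammarbisbisbis}.
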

\begin{proof}
Using \eqref{eq:mun} for $m_{r+\ell''+\mathcalboondox{i}_{\ell+\ell'-r}}$ in \eqref{eq:gammareven}-\eqref{eq:gammarodd}, 
the cyclicity of the higher operations and the fact they are $1_{A}$-normalized, we see that $\gamma_{2j+\mathcalboondox{p}_{\ell-\ell'}}^{\ell,\ell',\ell''} - \gamma_{2j+\mathcalboondox{p}_{\ell-\ell'}+1}^{\ell,\ell',\ell''}$ equals 
\begin{equation}
\label{eq:gammar}
\begin{split}
     \beta_{j,m}^{\ell,\ell',\ell''} \gan\big(m_{n-\ell''+1-2j-\mathcalboondox{p}_{\ell-\ell'}}(tf_{2j+1+\mathcalboondox{p}_{\ell-\ell'}}, &\dots, tf_{\ell},b,tg_{1},\dots, tg_{\ell'},c), t h_{1} \big) 
     \\
     &\big(f_{1}(1_{A}) h_{\ell''}(a) - f_{1}(a) h_{\ell''}(1_{A}) \big), 
\end{split}
\end{equation}
for all $j \in \{ 1, \dots, \lfloor \ell/2 \rfloor -1 + \mathcalboondox{p}_{\ell} \mathcalboondox{p}_{\ell'} \}$, where 
\[     \beta_{j,m}^{\ell,\ell',\ell''} = C_{2j+\mathcalboondox{p}_{\ell-\ell'},\ell''} \prod_{i=2}^{2j+\mathcalboondox{p}_{\ell - \ell'}} f_{i}(1_{A}) \prod_{i=2}^{\ell''-1} h_{i}(1_{A}).     \]
From definition \eqref{eq:mun} for $m_{n-\ell''+1-2j-\mathcalboondox{p}_{\ell-\ell'}}$ we see that \eqref{eq:gammar} is given by 
\begin{equation}
\label{eq:gammarbis}
     \bar{\beta}_{j,m}^{\ell,\ell',\ell''} \Sigma_{m}^{\ell,\ell',\ell''}(\Omega) \big(f_{1}(1_{A}) h_{\ell''}(a) - f_{1}(a) h_{\ell''}(1_{A}) \big), 
\end{equation}
for all $j \in \{ 1, \dots, \lfloor \ell/2 \rfloor -1 + \mathcalboondox{p}_{\ell} \mathcalboondox{p}_{\ell'} \}$. 
Note that we have used that $\ell' \geq \ell \geq 2$ in this case. 

If $\mathcalboondox{p}_{\ell - \ell'} =1$, from definition \eqref{eq:mun} for $m_{2+\ell''}$, we see that $\gamma_{2}^{\ell,\ell',\ell''}$ is written as a sum of four nonzero terms. 
Two of the these nonzero terms give exactly minus \eqref{eq:gammar} for $j=0$, so minus \eqref{eq:gammarbis} for that value of $j$, and the remaining two nonzero terms are precisely 
\begin{equation}
\label{eq:tildesigmampre}
\begin{split}
\theta \Big(&\gan\big(m_{\ell+\ell'+1}(tf_{2},\dots,tf_{\ell},b,tg_{1},\dots,tg_{\ell'},c), t f_{1} \big) h_{\ell''}(a) 
\\
&- \gan\big(m_{\ell+\ell'+1}(tf_{2},\dots,tf_{\ell},b,tg_{1},\dots,tg_{\ell'},c).a, t f_{1}\big) h_{\ell''}(1_{A})\Big),  
\end{split}
\end{equation}
where 
\[     \theta = C_{1,\ell''} \prod_{i=1}^{\ell''-1} h_{i}(1_{A}).     \]
Using definition \eqref{eq:mun} for $m_{\ell+\ell'+1}$, we obtain that \eqref{eq:tildesigmampre} gives minus 
$\bar{\delta}_{0}^{\ell,\ell',\ell''}$ times $\tilde{\Sigma}_{f}^{\ell,\ell',\ell''}(\Omega) h_{1}(1_{A})$. 
Note that the first term of \eqref{eq:tildesigmampre} is the opposite of the last term of \eqref{eq:gammajbisbis}. 

On the other hand, if $\ell'$ is even, from definition \eqref{eq:mun} for $m_{\ell+\ell''+1}$, we see that $\gamma_{\ell}^{\ell,\ell',\ell''}$ is 
precisely 
\begin{equation}
\label{eq:gammarl}
\begin{split}
     \bar{\beta}_{1,l}^{\ell,\ell',\ell''} \Sigma_{1,l}^{\ell,\ell',\ell''}(\Omega) \big(f_{1}(1_{A}) h_{\ell''}(a) - f_{1}(a) h_{\ell''}(1_{A}) \big). 
\end{split}
\end{equation}
The claimed expression of $\operatorname{SI}(n+1)_{\gan}^{m}(\omega)$ follows from \eqref{eq:sin+1mbis} and the previous computations. 
\end{proof}

Combining the previous facts we obtain the following result.
\begin{fact}
\label{fact:codd}
Assume the terminology we introduced in Subsubsection \ref{subsubsec:generalsetting} and after Lemma \ref{lemma:sioddl1}. 
Suppose that $\ell, \ell', \ell''$ are odd. 
Then, $\operatorname{SI}(n+1)_{\gan}(\omega) = 0$ (for $\ell, \ell', \ell''$ fixed as above, and any $\omega$) if and only if 
\begin{equation}
\label{eq:Codd}
\begin{split}
     &C_{\ell', \ell''+\ell} + C_{\ell'', \ell'+\ell} + C_{\ell,\ell'+\ell''} 
     \\
     &= \sum_{j=1}^{\lfloor \ell'/2 \rfloor} C_{\ell,2j} C_{\ell'',\ell'- 2 j+1} + \sum_{j=1}^{\lfloor \ell''/2 \rfloor} C_{\ell,\ell''-2j+1} C_{\ell',2 j} + \sum_{j=1}^{\lfloor \ell/2 \rfloor} C_{\ell'',2j} C_{\ell',\ell- 2 j+1}.
\end{split}
\end{equation}
\end{fact}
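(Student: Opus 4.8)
The plan is to start from the decomposition established just before Fact~\ref{fact:codd},
\[
\operatorname{SI}(n+1)_{\gan}(\omega)=\operatorname{SI}(n+1)^{2,n}_{\gan}(\omega)+\operatorname{SI}(n+1)^{f}_{\gan}(\omega)+\operatorname{SI}(n+1)^{m}_{\gan}(\omega)+\operatorname{SI}(n+1)^{l}_{\gan}(\omega),
\]
together with the closed formulas for the four summands: $\operatorname{SI}(n+1)^{2,n}_{\gan}(\omega)=\beta^{\ell,\ell',\ell''}E^{\ell,\ell',\ell''}$ with $\beta^{\ell,\ell',\ell''}$ as in \eqref{eq:betagen} and $E^{\ell,\ell',\ell''}$ as in \eqref{eq:sin+12nbisl=l'=1}--\eqref{eq:sin+12nbisl>1}, and the expressions for $\operatorname{SI}(n+1)^{f}_{\gan}(\omega)$, $\operatorname{SI}(n+1)^{l}_{\gan}(\omega)$, $\operatorname{SI}(n+1)^{m}_{\gan}(\omega)$ supplied by Facts~\ref{fact:sif}, \ref{fact:sil} and \ref{fact:sim}. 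The whole argument consists in specializing all these to the case where $\ell,\ell',\ell''$ are odd and checking that the outcome organizes into a single scalar multiplying one common, non-vanishing multilinear expression.

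First I would record the parity simplifications valid when $\ell,\ell',\ell''$ are odd: $\mathcalboondox{p}_{\ell}=\mathcalboondox{p}_{\ell'}=\mathcalboondox{p}_{\ell''}=1$, $\mathcalboondox{i}_{\ell}=\mathcalboondox{i}_{\ell'}=0$, $\mathcalboondox{p}_{\ell-\ell'}=\mathcalboondox{p}_{\ell'-\ell''}=0$, $(-1)^{\ell}=-1$ and $(-1)^{\ell+\ell'}=1$. In particular $\beta^{\ell,\ell',\ell''}=\big(C_{\ell',\ell''+\ell}+C_{\ell'',\ell'+\ell}+C_{\ell,\ell'+\ell''}\big)\kappa(\Omega)$, whose scalar part is exactly the left-hand side of \eqref{eq:Codd}. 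Since $\mathcalboondox{p}_{\ell-\ell'}=\mathcalboondox{p}_{\ell'-\ell''}=0$, the ``correction'' summands carrying $\tilde{\Sigma}_{f}^{\ell,\ell',\ell''}$ in Fact~\ref{fact:sif}, $\tilde{\Sigma}_{l}^{\ell,\ell',\ell''}$ in Fact~\ref{fact:sil}, and the matching terms inside $\operatorname{SI}(n+1)^{m}_{\gan}(\omega)$ in Fact~\ref{fact:sim} (as well as the term there carrying $\mathcalboondox{i}_{\ell'}$), all vanish, while $\operatorname{SI}(n+1)^{m}_{\gan}(\omega)=0$ altogether if $\ell=1$. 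What survives are the sums $\sum_{j}\bar{\beta}^{\ell,\ell',\ell''}_{j,f}\Sigma^{\ell,\ell',\ell''}_{j,f}(\cdots)$, $\sum_{j}\bar{\beta}^{\ell,\ell',\ell''}_{j,l}(\cdots)$ and $\sum_{j}\bar{\beta}^{\ell,\ell',\ell''}_{j,m}\Sigma^{\ell,\ell',\ell''}_{m}(\cdots)$, taken over $j\in\{1,\dots,\lfloor \ell'/2\rfloor\}$, $\{1,\dots,\lfloor \ell''/2\rfloor\}$ and $\{1,\dots,\lfloor \ell/2\rfloor\}$, respectively.

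The central step is an elementary identification of each $\Sigma$-block with one common factor. I would split into the mutually exclusive cases $\ell>1$, $\ell=1<\ell'$, and $\ell=\ell'=1$ (recall $\ell\le\min(\ell',\ell'')$). When $\ell>1$ one also has $\ell'>1$, and the relevant branches of \eqref{eq:sigmaf}, \eqref{eq:sigmal}, \eqref{eq:sigmam} are all $j$-independent, so that $\Sigma^{\ell,\ell',\ell''}_{j,f}(\Omega)\big(g_{\ell'}(c)h_{1}(1_{A})-g_{\ell'}(1_{A})h_{1}(c)\big)$, $\Sigma^{\ell,\ell',\ell''}_{j,l}(\Omega)\big(f_{1}(1_{A})h_{\ell''}(a)-f_{1}(a)h_{\ell''}(1_{A})\big)$ and $\Sigma^{\ell,\ell',\ell''}_{m}(\Omega)\big(f_{1}(1_{A})h_{\ell''}(a)-f_{1}(a)h_{\ell''}(1_{A})\big)$ each equal $-E^{\ell,\ell',\ell''}$ with $E^{\ell,\ell',\ell''}$ as in \eqref{eq:sin+12nbisl>1}. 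When $\ell=1$ the pertinent $E$ is the one in \eqref{eq:sin+12nbisl=1,l'>1}, resp.\ \eqref{eq:sin+12nbisl=l'=1}; using that $a\cdot tf_{1}$ is the functional $x\mapsto f_{1}(xa)$ — so that $\Sigma^{1,\ell',\ell''}_{j,l}(\Omega'')$ arises from $\Sigma^{1,\ell',\ell''}_{j,l}(\Omega)$ by the substitutions $f_{1}(b)\mapsto f_{1}(ba)$, $f_{1}(1_{A})\mapsto f_{1}(a)$ — a short expansion shows that $\Sigma^{1,\ell',\ell''}_{j,f}(\Omega)(\cdots)$ and $\Sigma^{1,\ell',\ell''}_{j,l}(\Omega)h_{\ell''}(a)-\Sigma^{1,\ell',\ell''}_{j,l}(\Omega'')h_{\ell''}(1_{A})$ again equal $-E^{1,\ell',\ell''}$. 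Finally, rewriting the coefficients via the symmetry $C_{i,j}=C_{j,i}$ of \eqref{eq:newcij} turns $\sum_{j}\bar{\beta}^{\ell,\ell',\ell''}_{j,f}$ into $\kappa(\Omega)\sum_{j=1}^{\lfloor \ell'/2\rfloor}C_{\ell,2j}C_{\ell'',\ell'-2j+1}$, $\sum_{j}\bar{\beta}^{\ell,\ell',\ell''}_{j,l}$ into $\kappa(\Omega)\sum_{j=1}^{\lfloor \ell''/2\rfloor}C_{\ell,\ell''-2j+1}C_{\ell',2j}$, and $\sum_{j}\bar{\beta}^{\ell,\ell',\ell''}_{j,m}$ into $\kappa(\Omega)\sum_{j=1}^{\lfloor \ell/2\rfloor}C_{\ell'',2j}C_{\ell',\ell-2j+1}$ (the last being empty when $\ell=1$).

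Adding the four contributions, one obtains that $\operatorname{SI}(n+1)_{\gan}(\omega)$ equals $\kappa(\Omega)\,E^{\ell,\ell',\ell''}$ times the difference between the left-hand side and the right-hand side of \eqref{eq:Codd}. Since $\kappa(\Omega)$ is a product of values $f_{i}(1_{A}),g_{i}(1_{A}),h_{i}(1_{A})$ and $E^{\ell,\ell',\ell''}$ is a fixed multilinear expression in $a,b,c$ and the boundary functionals which is not identically zero, one can choose $\Omega$ so that $\kappa(\Omega)E^{\ell,\ell',\ell''}\neq 0$ (the trivial one-dimensional case aside); hence $\operatorname{SI}(n+1)_{\gan}(\omega)$ vanishes for every such $\omega$ precisely when that scalar difference is $0$, i.e.\ precisely when \eqref{eq:Codd} holds. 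The main obstacle is purely organizational bookkeeping: carrying out the three-way case split and checking, in each case, both that the $\tilde{\Sigma}$-corrections genuinely cancel under the all-odd hypothesis and that every surviving $\Sigma$-block collapses, after an elementary expansion, to the same $-E^{\ell,\ell',\ell''}$, all while keeping the summation ranges $\lfloor \ell'/2\rfloor,\lfloor \ell''/2\rfloor,\lfloor \ell/2\rfloor$ in step with those in \eqref{eq:sin+1fbis}, \eqref{eq:sin+1mbis}, \eqref{eq:sin+1lbis}.
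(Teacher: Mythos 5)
Your proposal is correct and follows essentially the same route as the paper: both start from the decomposition $\operatorname{SI}(n+1)_{\gan}=\operatorname{SI}(n+1)^{2,n}_{\gan}+\operatorname{SI}(n+1)^{f}_{\gan}+\operatorname{SI}(n+1)^{m}_{\gan}+\operatorname{SI}(n+1)^{l}_{\gan}$, invoke Facts \ref{fact:sif}, \ref{fact:sil} and \ref{fact:sim}, use the all-odd parity hypothesis to kill the $\tilde{\Sigma}$-corrections, and collapse every surviving block to $\kappa(\Omega)E^{\ell,\ell',\ell''}$ times a product of constants $C_{i,j}$, with the same case split ($\ell=\ell'=1$, $\ell=1<\ell'$, all parameters $>1$) appearing in the paper's proof. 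Your explicit remark on choosing $\Omega$ with $\kappa(\Omega)E^{\ell,\ell',\ell''}\neq 0$ for the ``only if'' direction is a minor addition that the paper leaves implicit.
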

\begin{proof}
Recall that, by definition, $\operatorname{SI}(n+1)^{2,n}_{\gan}(\omega) = \beta^{\ell,\ell',\ell''} E^{\ell,\ell',\ell''}$, where $\beta^{\ell,\ell',\ell''}$ is given in \eqref{eq:betagen}, $E^{\ell,\ell',\ell''}$ in \eqref{eq:sin+12nbisl=l'=1}-\eqref{eq:sin+12nbisl>1} and $\kappa(\Omega)$ in \eqref{eq:kappa}. 

Assume first that $\ell = \ell' = 1$. 
Facts \ref{fact:sif}-\ref{fact:sim} tell us that $\operatorname{SI}(n+1)_{\gan}^{f}(\omega)$ and $\operatorname{SI}(n+1)_{\gan}^{m}(\omega)$ vanish, as well as  
\[     \operatorname{SI}(n+1)_{\gan}^{l}(\omega) = - \bigg(\sum_{j=1}^{n/2 - 2} C_{1,n-2-2j} C_{1, 2 j} \bigg) \kappa(\Omega) E^{1,1,\ell''},       \]
whereas \eqref{eq:betagen} gives 
\[     \operatorname{SI}(n+1)_{\gan}^{2,n}(\omega) = \big(C_{2,n-3} + 2 C_{1,n-2}\big) \kappa(\Omega) E^{1,1,\ell''}.      \]
This implies that $\operatorname{SI}(n+1)_{\gan}(\omega) = 0$ (for $\ell = \ell' = 1$ and any $\omega$) is equivalent to 
\begin{equation}
\label{eq:C1}
     C_{2,n-3} + 2 C_{1,n-2} = \sum_{j=1}^{n/2 - 2} C_{1,n-2-2j} C_{1, 2 j},
\end{equation}
which gives \eqref{eq:Codd} in this case. 

Assume now that $\ell = 1$ and $\ell' > 1$ is odd. 
Fact \ref{fact:sim} tells us that $\operatorname{SI}(n+1)_{\gan}^{m}(\omega)$ vanishes, whereas Facts \ref{fact:sif}-\ref{fact:sil} imply that 
\[     \operatorname{SI}(n+1)_{\gan}^{f}(\omega) = -\bigg(\sum_{j=1}^{\lfloor \ell'/2 \rfloor} C_{1,2j} C_{\ell'',\ell'- 2 j+1} \bigg)  \kappa(\Omega)E^{1,\ell',\ell''}     \]
and 
\[     \operatorname{SI}(n+1)_{\gan}^{l}(\omega) = -\bigg(\sum_{j=1}^{\lfloor \ell''/2 \rfloor} C_{1,\ell''-2j+1} C_{\ell',2 j} \bigg) \kappa(\Omega) E^{1,\ell',\ell''},      \]
whereas \eqref{eq:betagen} gives 
\[     \operatorname{SI}(n+1)_{\gan}^{2,n}(\omega) = \big(C_{\ell', \ell''+1} + C_{\ell'', \ell'+1} + C_{1,\ell'+\ell''}\big) \kappa(\Omega) E^{1,\ell',\ell''}.      \]
This implies that $\operatorname{SI}(n+1)_{\gan}(\omega) = 0$ (for $\ell = 1$ and $\ell' > 1$ odd, and any $\omega$) is tantamount to 
\begin{equation}
\label{eq:C2}
     C_{\ell', \ell''+1} + C_{\ell'', \ell'+1} + C_{1,\ell'+\ell''} = \sum_{j=1}^{\lfloor \ell'/2 \rfloor} C_{1,2j} C_{\ell'',\ell'- 2 j+1} + \sum_{j=1}^{\lfloor \ell''/2 \rfloor} C_{1,\ell''-2j+1} C_{\ell',2 j},
\end{equation}
which gives \eqref{eq:Codd} in this case. 

Assume finally that $\ell, \ell', \ell'' > 1$ are odd.
A direct computation using Facts \ref{fact:sif}, \ref{fact:sil} and \ref{fact:sim} shows that 
\begin{small}
\begin{equation}
\begin{split}
     \operatorname{SI}(n+1)_{\gan}^{\neq 2, n}&(\omega) = - \kappa(\Omega) E^{\ell,\ell',\ell''}
     \\
     &\bigg(\sum_{j=1}^{\lfloor \ell'/2 \rfloor} C_{\ell,2j} C_{\ell'',\ell'- 2 j+1} + \sum_{j=1}^{\lfloor \ell''/2 \rfloor} C_{\ell,\ell''-2j+1} C_{\ell',2 j} + \sum_{j=1}^{\lfloor \ell/2 \rfloor} C_{\ell',\ell-2j+1} C_{\ell'',2 j} \bigg),     
\end{split}
     \end{equation}
\end{small}
\hskip -0.8mm whereas \eqref{eq:betagen} gives 
\[     \operatorname{SI}(n+1)_{\gan}^{2,n}(\omega) = \big(C_{\ell', \ell''+\ell} + C_{\ell'', \ell'+\ell} + C_{\ell,\ell'+\ell''}\big) \kappa(\Omega) E^{\ell,\ell',\ell''}.      \]
This implies that $\operatorname{SI}(n+1)_{\gan}(\omega) = 0$ (for $\ell, \ell', \ell'' > 1$ odd, and any $\omega$) is equivalent to 
\begin{equation}
\label{eq:C4}
\begin{split}
     &C_{\ell', \ell''+\ell} + C_{\ell'', \ell'+\ell} + C_{\ell,\ell'+\ell''} 
     \\
     &= \sum_{j=1}^{\lfloor \ell'/2 \rfloor} C_{\ell,2j} C_{\ell'',\ell'- 2 j+1} + \sum_{j=1}^{\lfloor \ell''/2 \rfloor} C_{\ell,\ell''-2j+1} C_{\ell',2 j} + \sum_{j=1}^{\lfloor \ell/2 \rfloor} C_{\ell'',2j} C_{\ell',\ell- 2 j+1}.
\end{split}
\end{equation}
\end{proof}

\begin{fact}
\label{fact:ceven}
Assume the terminology we introduced in Subsubsection \ref{subsubsec:generalsetting} and after Lemma \ref{lemma:sioddl1}. 
Suppose that two parameters among $\ell, \ell', \ell''$ are even and the other is odd. 
Let $\ell_{1}$ and $\ell_{2}$ be the two even parameter and $\ell_{3}$ be the odd one.
Then, $\operatorname{SI}(n+1)_{\gan}(\omega) = 0$ (for $\ell, \ell', \ell''$ fixed as above, and any $\omega$) if and only if 
\begin{small}
\begin{equation}
\label{eq:Ceven}
\begin{split}
     &C_{\ell_{1}, \ell_{2}+\ell_{3}} - C_{\ell_{3}, \ell_{1}+\ell_{2}} + C_{\ell_{2},\ell_{1}+\ell_{3}} 
     \\
     &= \sum_{j=1}^{\lfloor \ell_{2}/2 \rfloor} C_{\ell_{1},2j-1} C_{\ell_{3},\ell_{2}- 2 j+2} - \sum_{j=1}^{\lfloor \ell_{3}/2 \rfloor + 1} C_{\ell_{1},\ell_{3}-2j+2} C_{\ell_{2},2 j-1} + \sum_{j=1}^{\lfloor \ell_{1}/2 \rfloor} C_{\ell_{2},2j-1} C_{\ell_{3},\ell_{1}-2 j+2}.
\end{split}
\end{equation}
\end{small}
\end{fact}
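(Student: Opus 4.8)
The plan is to mimic exactly the proof of Fact \ref{fact:codd}, replacing the "all odd" analysis by the "two even, one odd" analysis. The starting point is again the decomposition $\operatorname{SI}(n+1)_{\gan}(\omega) = \operatorname{SI}(n+1)^{2,n}_{\gan}(\omega) + \operatorname{SI}(n+1)^{\neq 2,n}_{\gan}(\omega)$ together with the further splitting $\operatorname{SI}(n+1)^{\neq 2,n}_{\gan}(\omega) = \operatorname{SI}(n+1)_{\gan}^{f}(\omega) + \operatorname{SI}(n+1)_{\gan}^{m}(\omega) + \operatorname{SI}(n+1)_{\gan}^{l}(\omega)$, whose three pieces have been evaluated in Facts \ref{fact:sif}, \ref{fact:sil} and \ref{fact:sim}. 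First I would record that, since $\ell + \ell' + \ell'' = n-1$ is odd, the hypothesis that two parameters are even and one is odd is consistent, and there are (up to the cyclic symmetry already exploited in Subsubsection \ref{subsubsec:generalsetting}) a few subcases according to which of $\ell,\ell',\ell''$ is the odd one; the constraint $\ell \le \min(\ell',\ell'')$ narrows these down further. In each subcase I would substitute the explicit formulas from the three preparatory Facts, observe that all of $\operatorname{SI}(n+1)^{2,n}_{\gan}(\omega)$, $\operatorname{SI}(n+1)^{f}_{\gan}(\omega)$, $\operatorname{SI}(n+1)^{m}_{\gan}(\omega)$, $\operatorname{SI}(n+1)^{l}_{\gan}(\omega)$ are scalar multiples of the common nonzero expression $\kappa(\Omega)$ times a fixed multilinear functional of $\Omega$ (which one checks is the same in all four pieces, using the parity assumptions to match the pieces $\tilde\Sigma_f$, $\tilde\Sigma_l$, $\Sigma_{j,f}$, $\Sigma_{j,l}$, $\Sigma_m$ against each other), and then read off that the vanishing of the total is equivalent to a single scalar identity among the constants $C_{i,j}$.

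The key bookkeeping step is to check that, under the "two even, one odd" hypothesis, the auxiliary terms indexed by $\mathcalboondox{p}_{\ell-\ell'}$, $\mathcalboondox{p}_{\ell'-\ell''}$, $\mathcalboondox{i}_{\ell}$, $\mathcalboondox{i}_{\ell'}$ etc.\ in the formulas \eqref{eq:gamma0-1bisbisbis}, \eqref{eq:gammal+1-l+2bisbisbis2} and \eqref{eq:gammarbisbisbis} either vanish or combine cleanly. Concretely: the two terms proportional to $\bar\delta_0^{\ell,\ell',\ell''}\tilde\Sigma_f^{\ell,\ell',\ell''}(\Omega) h_1(1_A)$ appearing with opposite signs in $\operatorname{SI}(n+1)^{f}_{\gan}(\omega)$ (from Fact \ref{fact:sif}) and in $\operatorname{SI}(n+1)^{m}_{\gan}(\omega)$ (from Fact \ref{fact:sim}) cancel, exactly as noted in the proof of Fact \ref{fact:sim} after \eqref{eq:tildesigmampre}; similarly the $\mathcalboondox{p}_{\ell'-\ell''}$-term of $\operatorname{SI}(n+1)^{l}_{\gan}(\omega)$ and the $\mathcalboondox{i}_{\ell'}$-term of $\operatorname{SI}(n+1)^{m}_{\gan}(\omega)$ either vanish (when the relevant parity is wrong) or account for the boundary summands $j = \lfloor\ell_i/2\rfloor+1$ in \eqref{eq:Ceven}. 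After these cancellations each of $\Sigma_{j,f}$, $\Sigma_{j,l}$, $\Sigma_m$, and the $\tilde\Sigma$'s reduces — by \eqref{eq:sigmaf}, \eqref{eq:sigmal}, \eqref{eq:sigmam} evaluated at the appropriate parities — to (a multiple of) one and the same expression, call it $F^{\ell,\ell',\ell''}(\Omega)$, which is nonzero for a suitable choice of $a,b,c,f_i,g_i,h_i$. Factoring $\kappa(\Omega) F^{\ell,\ell',\ell''}(\Omega)$ out of $\operatorname{SI}(n+1)_{\gan}(\omega)$, the coefficient on the "$2,n$" side is $C_{\ell_1,\ell_2+\ell_3}-C_{\ell_3,\ell_1+\ell_2}+C_{\ell_2,\ell_1+\ell_3}$ (this is \eqref{eq:betagen} with the signs dictated by the parities of $\ell,\ell',\ell''$, using $C_{i,j}=C_{j,i}$), and the coefficient on the "$\neq 2,n$" side is the sum of the three sums of products of $C$'s displayed on the right of \eqref{eq:Ceven}, with the minus sign on the middle sum coming from the $(-1)^\ell$ factors in Fact \ref{fact:sil}. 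Equating the two sides gives precisely \eqref{eq:Ceven}.

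The main obstacle I anticipate is not conceptual but combinatorial: correctly handling the several parity subcases (which of $\ell, \ell', \ell''$ is odd, whether $\ell=1$ or $\ell>1$, whether $\ell'=1$, the small exceptional low-dimensional cases like $\ell=\ell'=0$ or $n=6$) and making sure, in each, that the index ranges of the sums in \eqref{eq:sin+1fbis}--\eqref{eq:sin+1lbis} and the floor/ceiling expressions in Facts \ref{fact:sif}--\ref{fact:sim} line up to reproduce exactly the summation limits $\sum_{j=1}^{\lfloor \ell_i/2\rfloor}$ and $\sum_{j=1}^{\lfloor \ell_3/2\rfloor+1}$ in \eqref{eq:Ceven}, with the correct signs. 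In particular, the asymmetric role of the odd parameter $\ell_3$ (producing an extra summand and the overall minus sign in the middle sum) must be tracked carefully; I would verify it by checking the telescoping of consecutive $\gamma_{j,\bullet}^{\ell,\ell',\ell''}$ differences and the boundary terms arising when one of $\mathcalboondox{p}_{\ell-\ell'}, \mathcalboondox{p}_{\ell'-\ell''}$ equals $1$, exactly as in the proofs of Facts \ref{fact:sif} and \ref{fact:sil}. Once the coefficient identity \eqref{eq:Ceven} is isolated, the proof of this Fact is complete; its actual verification (that the $C_{i,j}$ defined in \eqref{eq:newcij} do satisfy \eqref{eq:Ceven}) is deferred to the subsequent Fact, which handles \eqref{eq:Codd}, \eqref{eq:Ceven} and their analogues uniformly via the generating-function identity \eqref{eq:Cgen}.
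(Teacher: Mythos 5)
Your proposal follows the paper's own proof essentially verbatim: the same decomposition $\operatorname{SI}(n+1)_{\gan}=\operatorname{SI}(n+1)^{2,n}_{\gan}+\operatorname{SI}(n+1)^{f}_{\gan}+\operatorname{SI}(n+1)^{m}_{\gan}+\operatorname{SI}(n+1)^{l}_{\gan}$, the same use of Facts \ref{fact:sif}, \ref{fact:sil} and \ref{fact:sim}, the same parity subcase analysis with the same boundary-term cancellations, and the same final step of factoring out $\kappa(\Omega)$ times the common functional $E^{\ell,\ell',\ell''}$ and equating coefficients after reindexing, so it is correct in approach and outline (with the caveat that the summation limits and the sign of the middle sum in \eqref{eq:Ceven} come out of the reindexing and added/subtracted boundary summands rather than uniformly from the $(-1)^{\ell}$ factors, as you yourself flag). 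One peripheral inaccuracy: the verification that the constants $C_{i,j}$ actually satisfy \eqref{eq:Ceven} is carried out in the proof of Lemma \ref{lemma:sioddl1} via the Bernoulli identities of \cite{BCM17}, while Fact \ref{fact:cgen} only merges \eqref{eq:Codd} and \eqref{eq:Ceven} into \eqref{eq:Cgen}; this does not affect the Fact itself, which is only the stated equivalence.
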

\begin{proof}
Recall that, by definition, $\operatorname{SI}(n+1)^{2,n}_{\gan}(\omega) = \beta^{\ell,\ell',\ell''} E^{\ell,\ell',\ell''}$, where $\beta^{\ell,\ell',\ell''}$ is given in \eqref{eq:betagen}, $E^{\ell,\ell',\ell''}$ in \eqref{eq:sin+12nbisl=l'=1}-\eqref{eq:sin+12nbisl>1} and $\kappa(\Omega)$ in \eqref{eq:kappa}. 

Suppose first that $\ell = 1$ and $\ell' > 1$ is even, so $\ell''$ is also even. 
Fact \ref{fact:sim} tells us that $\operatorname{SI}(n+1)_{\gan}^{m}(\omega) = 0$. 
Moreover, one can easily check that the term in the first line of \eqref{eq:gamma0-1bisbisbis} 
coincides with minus the term indexed by $j=1$ in the sum in \eqref{eq:gammal+1-l+2bisbisbis}. 
Using Facts \ref{fact:sif} and \ref{fact:sil} we see that 
\begin{small}
\[     \operatorname{SI}(n+1)_{\gan}^{\neq 2, n}(\omega) = -\bigg(\sum_{j=1}^{\lfloor \ell'/2 \rfloor} C_{1,2j} C_{\ell'',\ell'- 2 j+1} + \sum_{j=2}^{\lfloor \ell''/2 \rfloor} C_{1,\ell''-2j+2} C_{\ell',2 j-1} \bigg) \kappa(\Omega) E^{1,\ell',\ell''},     \]
\end{small}
\hskip -0.8mm whereas \eqref{eq:betagen} gives 
\[     \operatorname{SI}(n+1)_{\gan}^{2,n}(\omega) = \big(C_{\ell', \ell''+1} + C_{\ell'', \ell'+1} - C_{1,\ell'+\ell''}\big) \kappa(\Omega) E^{1,\ell',\ell''}.      \]
This implies that $\operatorname{SI}(n+1)_{\gan}(\omega) = 0$ (for $\ell = 1$ and $\ell' > 1$ even, and any $\omega$) is tantamount to
\begin{equation}
\label{eq:C3}
     C_{\ell', \ell''+1} + C_{\ell'', \ell'+1} - C_{1,\ell'+\ell''} = \sum_{j=1}^{\lfloor \ell'/2 \rfloor} C_{1,2j} C_{\ell'',\ell'- 2 j+1} + \sum_{j=2}^{\lfloor \ell''/2 \rfloor} C_{1,\ell''-2j+2} C_{\ell',2 j-1},
\end{equation}
which coincides with \eqref{eq:Ceven} for $\ell_{1} = \ell'$, $\ell_{2} = \ell''$ and $\ell_{3}=1$. 
Indeed, reindexing the first sum in the right member of \eqref{eq:C3} by sending $\ell'-2j+1$ to $2j-1$, we obtain the third sum in the right member of \eqref{eq:Ceven}. 
Moreover, by adding and subtracting to the right member of \eqref{eq:C3} the summand indexed by $j = 1$ of the second sum of the right member of \eqref{eq:C3}, we obtain precisely the first and second sums in the right member of \eqref{eq:Ceven}.

Assume now that $\ell, \ell' > 1$ are even. 
It is clear that the term indexed by $j=1$ in the sum in the second line of \eqref{eq:gammal+1-l+2bisbisbis2}  
is minus the term of the third line of \eqref{eq:gammarbisbisbis}. 
Moreover, the sum of the term indexed by $j=1$ in the sum of the second line of \eqref{eq:gamma0-1bisbisbis}, 
and the term in the first line of \eqref{eq:gammal+1-l+2bisbisbis2} gives exactly
\[     - \kappa(\Omega) C_{1,\ell} C_{\ell',\ell''} E^{\ell,\ell',\ell''}.      \] 
Then, a direct computation using Facts \ref{fact:sif}-\ref{fact:sim} shows that 
\begin{small}
\begin{equation}
\begin{split}
     \operatorname{SI}&(n+1)_{\gan}^{\neq 2, n}(\omega) = - \kappa(\Omega) E^{\ell,\ell',\ell''}
     \\
     &\bigg(\sum_{j=1}^{\lfloor \ell'/2 \rfloor} C_{\ell,2j-1} C_{\ell'',\ell'- 2 j+2} - \sum_{j=2}^{\lfloor \ell''/2 \rfloor + 1} C_{\ell,\ell''-2j+2} C_{\ell',2 j-1} + \sum_{j=1}^{\lfloor \ell/2 \rfloor - 1} C_{\ell',\ell-2j+1} C_{\ell'',2 j} \bigg),     
\end{split}
     \end{equation}
\end{small}
\hskip -0.8mm whereas \eqref{eq:betagen} gives 
\[     \operatorname{SI}(n+1)_{\gan}^{2,n}(\omega) = \big(C_{\ell', \ell''+\ell} - C_{\ell'', \ell'+\ell} + C_{\ell,\ell'+\ell''}\big) \kappa(\Omega) E^{\ell,\ell',\ell''}.      \]
This implies that $\operatorname{SI}(n+1)_{\gan}(\omega) = 0$ (for $\ell, \ell' > 1$ even, and any $\omega$) is equivalent to 
\begin{equation}
\label{eq:C5}
\begin{split}
     &C_{\ell', \ell''+\ell} - C_{\ell'', \ell'+\ell} + C_{\ell,\ell'+\ell''} 
     \\
     &= \sum_{j=1}^{\lfloor \ell'/2 \rfloor} C_{\ell,2j-1} C_{\ell'',\ell'- 2 j+2} - \sum_{j=2}^{\lfloor \ell''/2 \rfloor + 1} C_{\ell,\ell''-2j+2} C_{\ell',2 j-1} + \sum_{j=1}^{\lfloor \ell/2 \rfloor - 1} C_{\ell',\ell-2j+1} C_{\ell'',2 j},
\end{split}
\end{equation}
which coincides with \eqref{eq:Ceven} for $\ell_{1} = \ell$, $\ell_{2} = \ell'$ and $\ell_{3}=\ell''$. 
Indeed, note that the first sum of the right member of \eqref{eq:C5} coincides with the first sum of the right member of \eqref{eq:Ceven}. 
Now, add and subtract to the right member of \eqref{eq:C5} the summand indexed by $j = \lfloor \ell/2 \rfloor$ of the third sum of the right member of \eqref{eq:C5}. 
Absorbing the negative added term in the second sum of the right member of \eqref{eq:C5}, the new second sum of the right member of \eqref{eq:C5} gives exactly the second sum of the right member of \eqref{eq:Ceven}. 
Finally, absorbing the positive added term in the third sum of the right member of \eqref{eq:C5} and reindexing it by sending $\ell-2j+1$ to $2j-1$, we get precisely the third sum of the right member of \eqref{eq:Ceven}. 

Suppose now that $\ell > 1$ is even and $\ell' > 1$ is odd. 
It is clear that the term in the first line of \eqref{eq:gamma0-1bisbisbis}
is minus the term in the first line of \eqref{eq:gammarbisbisbis}. 
Moreover, by the same reason as in the previous paragraph, the sum of the term indexed by $j=1$ in the sum of the second line of \eqref{eq:gamma0-1bisbisbis}   
and the term in the first line of \eqref{eq:gammal+1-l+2bisbisbis2} gives exactly
\[     - \kappa(\Omega) C_{1,\ell} C_{\ell',\ell''} E^{\ell,\ell',\ell''}.      \] 
Then, a direct computation using the previous facts shows that 
\begin{small}
\begin{equation}
\begin{split}
     \operatorname{SI}&(n+1)_{\gan}^{\neq 2, n}(\omega) = \kappa(\Omega) E^{\ell,\ell',\ell''}
     \\
     &\bigg(-\sum_{j=1}^{\lfloor \ell'/2 \rfloor + 1} C_{\ell,2j-1} C_{\ell'',\ell'- 2 j+2} + \sum_{j=1}^{\lfloor \ell''/2 \rfloor} C_{\ell,\ell''-2j+1} C_{\ell',2 j} + \sum_{j=0}^{\lfloor \ell/2 \rfloor - 1} C_{\ell',\ell-2j} C_{\ell'',2 j+1} \bigg),     
\end{split}
     \end{equation}
\end{small}
\hskip -0.8mm whereas \eqref{eq:betagen} gives 
\[     \operatorname{SI}(n+1)_{\gan}^{2,n}(\omega) = \big(C_{\ell', \ell''+\ell} - C_{\ell'', \ell'+\ell} - C_{\ell,\ell'+\ell''}\big) \kappa(\Omega) E^{\ell,\ell',\ell''}.      \]
This implies that $\operatorname{SI}(n+1)_{\gan}(\omega) = 0$ (for $\ell > 1$ even, $\ell' > 1$ odd, and any $\omega$) is tantamount to 
\begin{equation}
\label{eq:C6}
\begin{split}
     &C_{\ell,\ell'+\ell''} + C_{\ell'', \ell'+\ell} - C_{\ell', \ell''+\ell}
     \\
     &= \sum_{j=1}^{\lfloor \ell''/2 \rfloor} C_{\ell,\ell''-2j+1} C_{\ell',2 j} -\sum_{j=1}^{\lfloor \ell'/2 \rfloor + 1} C_{\ell,2j-1} C_{\ell'',\ell'- 2 j+2} + \sum_{j=0}^{\lfloor \ell/2 \rfloor - 1} C_{\ell',\ell-2j} C_{\ell'',2 j+1}
     \\ 
       &= \sum_{j=1}^{\lfloor \ell''/2 \rfloor} C_{\ell,2j-1} C_{\ell',\ell''-2 j+2} -\sum_{j=1}^{\lfloor \ell'/2 \rfloor + 1} C_{\ell,2j-1} C_{\ell'',\ell'- 2 j+2} + \sum_{j=1}^{\lfloor \ell/2 \rfloor} C_{\ell',\ell-2j+2} C_{\ell'',2 j-1},   
\end{split}
\end{equation}
where we have reindexed the third sum of the last member by replacing $j$ by $j-1$, and the first sum of the last member by replacing $2j$ by $\ell''-2j+2$.
This coincides with \eqref{eq:Ceven} for $\ell_{1} = \ell''$, $\ell_{2} = \ell$ and $\ell_{3}=\ell'$. 

Finally, assume that $\ell > 1$ is odd and $\ell' > 1$ is even. 
As in the previous case, the term in the first line of \eqref{eq:gamma0-1bisbisbis} is minus the term in the first line of \eqref{eq:gammarbisbisbis}. 
Moreover, the term indexed by $j=1$ in the sum of the second line of \eqref{eq:gammal+1-l+2bisbisbis2}
is minus the term in the third line of \eqref{eq:gammarbisbisbis}.  
As a consequence, 
\begin{small}
\begin{equation}
\begin{split}
     \operatorname{SI}&(n+1)_{\gan}^{\neq 2, n}(\omega) = - \kappa(\Omega) E^{\ell,\ell',\ell''}
     \\
     &\bigg(\sum_{j=1}^{\lfloor \ell'/2 \rfloor} C_{\ell,2j} C_{\ell'',\ell'- 2 j+1} + \sum_{j=2}^{\lfloor \ell''/2 \rfloor} C_{\ell,\ell''-2j+2} C_{\ell',2 j - 1} - \sum_{j=0}^{\lfloor \ell/2 \rfloor - 1} C_{\ell',\ell-2j} C_{\ell'',2 j+1} \bigg),     
\end{split}
     \end{equation}
\end{small}
\hskip -0.8mm whereas \eqref{eq:betagen} gives 
\[     \operatorname{SI}(n+1)_{\gan}^{2,n}(\omega) = \big(C_{\ell', \ell''+\ell} + C_{\ell'', \ell'+\ell} - C_{\ell,\ell'+\ell''}\big) \kappa(\Omega) E^{\ell,\ell',\ell''}.      \]
This implies that $\operatorname{SI}(n+1)_{\gan}(\omega) = 0$ (for $\ell > 1$ odd, $\ell' > 1$ even, and any $\omega$) is equivalent to
\begin{equation}
\label{eq:C7}
\begin{split}
     &C_{\ell', \ell''+\ell} + C_{\ell'', \ell'+\ell} - C_{\ell,\ell'+\ell''}
     \\
     &= \sum_{j=1}^{\lfloor \ell'/2 \rfloor} C_{\ell,2j} C_{\ell'',\ell'- 2 j+1} - \sum_{j=0}^{\lfloor \ell/2 \rfloor - 1} C_{\ell',\ell-2j} C_{\ell'',2 j+1} + \sum_{j=2}^{\lfloor \ell''/2 \rfloor} C_{\ell,\ell''-2j+2} C_{\ell',2 j - 1}
     \\
     &= \sum_{j=1}^{\lfloor \ell'/2 \rfloor} C_{\ell,\ell'-2j+2} C_{\ell'',2 j-1} - \sum_{j=1}^{\lfloor \ell/2 \rfloor} C_{\ell',\ell-2j+2} C_{\ell'',2 j-1} + \sum_{j=2}^{\lfloor \ell''/2 \rfloor} C_{\ell,\ell''-2j+2} C_{\ell',2 j - 1},
\end{split}
\end{equation}
where, in the second equality, we have reindexed the first sum of the second member by changing $2j$ by $\ell'-2j+2$, 
and also the second sum of the second member by changing $j$ by $j - 1$. 
It is clear that the last expression in \eqref{eq:C7} coincides with \eqref{eq:Ceven} for $\ell_{1} = \ell'$, $\ell_{2} = \ell''$ and $\ell_{3} = \ell$. 
Indeed, this follows from adding and subtracting the summand indexed by $j=1$ in the third sum of the last member of \eqref{eq:C7}, since the negative added term coincides exactly with the summand indexed by $j= \lfloor \ell/2 \rfloor + 1$ in the second sum of the last member of \eqref{eq:C7}.
\end{proof}

We can provide the following unified expression for the equations \eqref{eq:Codd} and \eqref{eq:Ceven}. 
It follows directly from Facts \ref{fact:codd} and \ref{fact:ceven}. 
\begin{fact}
\label{fact:cgen}
Assume the terminology we introduced in Subsubsection \ref{subsubsec:generalsetting} and after Lemma \ref{lemma:sioddl1}. 
Then, $\operatorname{SI}(n+1)_{\gan}(\omega) = 0$ (for $\ell, \ell', \ell'' \in \NN$ such that $\ell+\ell'+\ell''=n-1$ with $n \geq 6$ even, and any $\omega$ as fixed before) if and only if 
\begin{equation}
\label{eq:Cgen}
\begin{split}
     (-1)^{\ell+1} C_{\ell, \ell'+\ell''} &+ (-1)^{\ell''+1} C_{\ell'', \ell'+\ell} + (-1)^{\ell'+1} C_{\ell',\ell+\ell''} 
     \\
     &= (-1)^{\ell'+1} \sum_{j=1}^{\lfloor \ell'/2 \rfloor + \mathcalboondox{i}_{\ell} \mathcalboondox{i}_{\ell''}} C_{\ell,2j-\mathcalboondox{i}_{\ell}} C_{\ell'',\ell'- 2 j+1+\mathcalboondox{i}_{\ell}} 
    \\
    &+ (-1)^{\ell''+1} \sum_{j=1}^{\lfloor \ell''/2 \rfloor + \mathcalboondox{i}_{\ell} \mathcalboondox{i}_{\ell'}} C_{\ell',2 j-\mathcalboondox{i}_{\ell'}} C_{\ell,\ell''-2j+1+\mathcalboondox{i}_{\ell'}}  
     \\
     &+ (-1)^{\ell+1} \sum_{j=1}^{\lfloor \ell/2 \rfloor + \mathcalboondox{i}_{\ell'} \mathcalboondox{i}_{\ell''}} C_{\ell'',2j-\mathcalboondox{i}_{\ell''}} C_{\ell',\ell-2 j+1+\mathcalboondox{i}_{\ell''}}.
\end{split}
\end{equation}
\end{fact}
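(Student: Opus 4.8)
The plan is to deduce the single unified identity \eqref{eq:Cgen} from the two identities \eqref{eq:Codd} and \eqref{eq:Ceven}, which by Facts \ref{fact:codd} and \ref{fact:ceven} are already known to be equivalent to $\operatorname{SI}(n+1)_{\gan}(\omega) = 0$ in the relevant parity regimes. First I would observe that, since $n$ is even, the number $\ell+\ell'+\ell''=n-1$ is odd, so an odd number of the three parameters $\ell,\ell',\ell''$ are odd: either all three are odd, or exactly one is odd and the other two are even. These two cases are precisely the hypotheses of Facts \ref{fact:codd} and \ref{fact:ceven}. Hence it suffices to check that the general formula \eqref{eq:Cgen} specializes, term by term, to \eqref{eq:Codd} in the first case and to \eqref{eq:Ceven} in the second, after which the assertion follows from those facts.

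In the all-odd case one has $\mathcalboondox{i}_{\ell} = \mathcalboondox{i}_{\ell'} = \mathcalboondox{i}_{\ell''} = 0$, so every index shift $+\mathcalboondox{i}_{\bullet}$ in \eqref{eq:Cgen} vanishes, each summation range becomes a plain $\lfloor\bullet/2\rfloor$, and each sign $(-1)^{\ell+1}, (-1)^{\ell'+1}, (-1)^{\ell''+1}$ equals $+1$. Substituting these values into \eqref{eq:Cgen} and reordering the three leading binomials on the left, one obtains literally \eqref{eq:Codd} (using the commutativity of the product in each summand), so this case is immediate from Fact \ref{fact:codd}.

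In the case of two even parameters and one odd, I would write $\ell_1,\ell_2$ for the even ones and $\ell_3$ for the odd one, occupying whichever of the three positions among $\ell,\ell',\ell''$ they do. Then the $\mathcalboondox{i}$'s attached to the even indices equal $1$ while $\mathcalboondox{i}_{\ell_3}=0$, and the signs $(-1)^{\ell_i+1}$ equal $-1$ for the even indices and $+1$ for the odd one. Plugging this into \eqref{eq:Cgen}, multiplying through by $-1$, and moving the boundary summands created by the shifts $+\mathcalboondox{i}_{\bullet}$ (e.g.\ via the add-and-subtract and reindexing manipulations, such as replacing $2j$ by $\ell_\bullet-2j+2$, already performed in the proof of Fact \ref{fact:ceven}), the left- and right-hand sides of \eqref{eq:Cgen} turn into the left- and right-hand sides of \eqref{eq:Ceven} with $(\ell_1,\ell_2,\ell_3)$ as in Fact \ref{fact:ceven}. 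Since Fact \ref{fact:ceven} already treats each sub-configuration of which position holds the odd parameter, this completes the argument.

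The only delicate point — and hence the step I expect to be the main, though purely routine, obstacle — is the bookkeeping in the second case: one must verify that each of the three sums in \eqref{eq:Cgen}, together with its ceiling/floor limit and its $\mathcalboondox{i}$-dependent index shift, matches the corresponding sum in \eqref{eq:Ceven} under the reindexings used inside Fact \ref{fact:ceven}. No genuine difficulty beyond this careful case-by-case matching is anticipated.
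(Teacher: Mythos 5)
Your proposal is correct and is essentially the paper's own argument: the paper derives Fact \ref{fact:cgen} directly from Facts \ref{fact:codd} and \ref{fact:ceven}, using that $\ell+\ell'+\ell''=n-1$ odd forces either all three parameters odd or exactly one odd, and that \eqref{eq:Cgen} specializes to \eqref{eq:Codd} in the first case and to minus \eqref{eq:Ceven} (after the same kind of reindexings, e.g.\ $2j\mapsto\ell_{\bullet}-2j+2$) in the second, exactly as you describe (cf.\ Remark \ref{remark:perm}).
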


\begin{remark}
\label{remark:perm}
Note that \eqref{eq:Cgen} coincides with \eqref{eq:Codd} if all $\ell, \ell', \ell''$ are odd, but it is minus \eqref{eq:Ceven} 
if there is only one parameter among $\ell, \ell', \ell''$ which is odd. 
Note that equations \eqref{eq:Cgen} can \textit{a priori} be defined for any triple $(\ell,\ell', \ell'') \in \NN^{3}$ such that 
$\ell + \ell' + \ell'' = n - 1$, and $n \geq 6$ is even. 
We will denote by $\mathcalboondox{Eq}(\ell,\ell',\ell'')$ the identity given by \eqref{eq:Cgen}, indexed by $(\ell,\ell',\ell'')$. 
The expression \eqref{eq:Cgen} easily tells us that $\mathcalboondox{Eq}(\ell_{1},\ell_{2},\ell_{3})$ coincides with $\mathcalboondox{Eq}(\mathcalboondox{m}_{1},\mathcalboondox{m}_{2},\mathcalboondox{m}_{3})$ if $\{ \ell_{1},\ell_{2},\ell_{3} \} = \{ \mathcalboondox{m}_{1},\mathcalboondox{m}_{2},\mathcalboondox{m}_{3} \}$. 
\end{remark}

After these preparations we are now able to prove Lemma \ref{lemma:sioddl1}. 

\begin{proof}[Proof of Lemma \ref{lemma:sioddl1}]
By Fact \ref{fact:cgen}, the expression of $\operatorname{SI}(n+1)_{\gan}(\omega)$ in Lemma \ref{lemma:sioddl1} (where $\ell, \ell', \ell'' \geq 1$) vanishes if and only if the constants $C_{i,j}$ satisfy the identities \eqref{eq:Cgen}. 
It remains to prove that the latter are verified. 
To simplify our notation, we will write $n = 2 (k +1)$, with $k \geq 2$. 
Recall that we will denote by $\mathcalboondox{Eq}(\ell,\ell',\ell'')$ the identity \eqref{eq:Cgen} indexed by $(\ell,\ell',\ell'')$.
To prove that $\mathcalboondox{Eq}(\ell,\ell',\ell'')$ holds we will show that it can obtained as a linear combination of 
the identities appearing in \cite{BCM17}, Thm. 2.4. 

Let $a, b, c \in \NN_{0}$ such that $a + b+ c = 2k - 1$. 
We recall the identity on the Bernoulli numbers appearing in \cite{BCM17}, Thm. 2.4, that we will denote by $\operatorname{Eq}(a,b,c)$, which is of the form
\begin{equation}
\label{eq:bcvm}
     -\mu_{0} \mathcal{B}_{2k} = \frac{\mu_{k}}{2} \mathcal{B}_{k}^{2} + \sum_{j=1}^{\lfloor (k-1)/2 \rfloor} \mu_{2j} \mathcal{B}_{2 j} \mathcal{B}_{2k - 2 j},    
\end{equation}
where $\mu_{2j} = \mu_{2j}(a,b,c)$ is given by 
\begin{scriptsize}
\begin{equation}
\label{eq:bcvmc}
\begin{split}
     \begin{pmatrix} 2k \\ 2j \end{pmatrix} &\bigg[ (-1)^{c} \begin{pmatrix} 2k- 2 j \\ c \end{pmatrix} \hskip -0.8em \sum_{d=\operatorname{max}(0,2j-b)}^{\operatorname{min}(a,2j)} \hskip -0.8em (-1)^{d} \begin{pmatrix} 2j \\ d \end{pmatrix} 
     + (-1)^{c} \begin{pmatrix} 2 j \\ c \end{pmatrix} \hskip -0.8em \sum_{d=\operatorname{max}(0,2k-2j-b)}^{\operatorname{min}(a,2k-2j)} \hskip -0.8em (-1)^{d} \begin{pmatrix} 2k-2j \\ d \end{pmatrix}
     \\
     &- (-1)^{a} \begin{pmatrix} 2k - 2 j \\ a \end{pmatrix} \hskip -0.8em \sum_{d=\operatorname{max}(0,2j-b)}^{\operatorname{min}(c,2j)} \hskip -0.8em (-1)^{d} \begin{pmatrix} 2j \\ d \end{pmatrix} 
     - (-1)^{a} \begin{pmatrix} 2 j \\ a \end{pmatrix} \hskip -0.8em \sum_{d=\operatorname{max}(0,2k-2j-b)}^{\operatorname{min}(c,2k-2j)} \hskip -0.8em (-1)^{d} \begin{pmatrix} 2k-2j \\ d \end{pmatrix} \bigg], 
     \end{split}
\end{equation}
\end{scriptsize}
\hskip -0.8mm for all $j \in \llbracket 0, \lfloor k/2 \rfloor \rrbracket$.
Note that $n$ in \cite{BCM17}, Thm. 2.4, coincides with $2 k$ in our notation, and the index $k$ in \cite{BCM17}, Thm. 2.4, is our $2j$. 
Notice also that the first term of the right member of \eqref{eq:bcvm} vanishes if $k \geq 2$ is odd since $\mathcal{B}_{k}= 0$ in that case. 

We will first reduce the expression \eqref{eq:bcvmc}, to be able to handle it. 
We first denote $\mu_{2j}^{\circ}(a,b,c)$ the quotient of $\mu_{2j}(a,b,c)$ by the binomial coefficient $C(2k,2j)$. 
Note that \eqref{eq:bcvmc} tells us directly that 
\begin{equation}
\label{eq:bcvmc0}
\begin{split}
     \mu_{0}^{\circ}(a,b,c) = (-1)^{c} \begin{pmatrix} 2k \\ c \end{pmatrix} - (-1)^{a} \begin{pmatrix} 2k \\ a \end{pmatrix}. 
     \end{split}
\end{equation}

On the other hand, using \eqref{eq:binpasgen} as well as the comments below it, together with \eqref{eq:binids}, we see that $\mu_{2j}^{\circ} = \mu_{2j}^{\circ} (a,b,c)$ coincides with 
\begin{equation}
\label{eq:bcvmc2}
\begin{split}
     &(-1)^{a + c} \bigg\{\bigg[ \begin{pmatrix} 2k- 2 j \\ c \end{pmatrix} \begin{pmatrix} 2 j -1 \\ a \end{pmatrix} - \begin{pmatrix} 2k- 2 j - 1 \\ c \end{pmatrix} \begin{pmatrix} 2 j \\ a \end{pmatrix}\bigg] 
     \\
     &\phantom{(-1)^{a + c} \bigg\{}+ \bigg[ \begin{pmatrix} 2 j \\ c \end{pmatrix} \begin{pmatrix} 2k-2 j -1 \\ a \end{pmatrix} - \begin{pmatrix} 2j - 1 \\ c \end{pmatrix} \begin{pmatrix} 2k - 2 j \\ a \end{pmatrix}\bigg]\bigg\}
     \\
     &+(-1)^{b + c} \bigg\{ \begin{pmatrix} 2k- 2 j \\ c \end{pmatrix} \begin{pmatrix} 2 j -1 \\ b \end{pmatrix} + \begin{pmatrix} 2 j \\ c \end{pmatrix} \begin{pmatrix} 2k-2 j -1 \\ b \end{pmatrix}\bigg\} 
     \\
     &- (-1)^{a+b} \bigg\{ \begin{pmatrix} 2k - 2j \\ a \end{pmatrix} \begin{pmatrix} 2j - 1 \\ b \end{pmatrix} + \begin{pmatrix} 2 j \\ a \end{pmatrix} \begin{pmatrix} 2k-2 j -1 \\ b \end{pmatrix} \bigg\}
     \end{split}
\end{equation}
for all $j \in \llbracket 1, \lfloor k/2 \rfloor \rrbracket$. 
Furthermore, applying \eqref{eq:binid} to the previous expression, we obtain that $\mu_{2j}^{\circ} = \mu_{2j}^{\circ}(a,b,c)$ is given by
\begin{small}
\begin{equation}
\label{eq:bcvmc3}
\begin{split}
     &(-1)^{a + c} \bigg\{ \begin{pmatrix} 2k- 2 j - 1 \\ c - 1 \end{pmatrix} \bigg[ \frac{2 k - 2 j}{c} \begin{pmatrix} 2 j -1 \\ a \end{pmatrix}  - \frac{2 k - 2 j - c}{c} \begin{pmatrix} 2 j \\ a \end{pmatrix}\bigg]
     \\
     &\phantom{ (-1)^{a + c} \bigg\{ }+ \begin{pmatrix} 2 j - 1 \\ c - 1 \end{pmatrix} \bigg[ \frac{2j}{c} \begin{pmatrix} 2k-2 j -1 \\ a \end{pmatrix} - \frac{2j-c}{c} \begin{pmatrix} 2k-2 j \\ a \end{pmatrix} \bigg]\bigg\}
     \\
     &+(-1)^{b + c} \bigg\{ \begin{pmatrix} 2k- 2 j-1 \\ c - 1 \end{pmatrix} \begin{pmatrix} 2 j -1 \\ b \end{pmatrix} \frac{2k-2j}{c} + \begin{pmatrix} 2 j - 1 \\ c - 1 \end{pmatrix} \begin{pmatrix} 2k-2 j -1 \\ b \end{pmatrix} \frac{2j}{c} \bigg\} 
     \\
     &- (-1)^{a+b} \bigg\{ \begin{pmatrix} 2k - 2j - 1 \\ b \end{pmatrix} \begin{pmatrix} 2j \\ a \end{pmatrix} + \begin{pmatrix} 2 j - 1 \\ b \end{pmatrix} \begin{pmatrix} 2k-2 j \\ a \end{pmatrix} \bigg\},
     \end{split}
\end{equation}
\end{small}
\hskip -0.8mm for all $j \in \llbracket 1, \lfloor k/2 \rfloor \rrbracket$, whenever $c > 0$. 

For later use, we let the reader check that 
\begin{equation}
\label{eq:maincomp}
\begin{split}
 &c \mu_{0}^{\circ}(a,b,c) + (b+1) \mu_{0}^{\circ}(a,c-1,b+1)
 \\
 &= 2k \bigg[ (-1)^{c} \begin{pmatrix} 2k-1 \\ c - 1 \end{pmatrix} - (-1)^{b} \begin{pmatrix} 2k-1 \\ b \end{pmatrix} 
 - (-1)^{a} \begin{pmatrix} 2k-1 \\ a \end{pmatrix} \bigg],  
 \end{split}
\end{equation}
as well as
\begin{equation}
\label{eq:maincomp2}
\begin{split}
 &c \mu_{2j}^{\circ}(a,b,c) + (b+1) \mu_{2j}^{\circ}(a,c-1,b+1)
 \\
 &= 2k \bigg[ (-1)^{a+c} \bigg\{ \begin{pmatrix} 2k- 2 j-1 \\ c - 1 \end{pmatrix} \begin{pmatrix} 2 j -1 \\ a \end{pmatrix} + \begin{pmatrix} 2 j - 1 \\ c - 1 \end{pmatrix} \begin{pmatrix} 2k-2 j -1 \\ a \end{pmatrix} \bigg\}
 \\
 &+ (-1)^{b+c} \bigg\{ \begin{pmatrix} 2k- 2 j-1 \\ c - 1 \end{pmatrix} \begin{pmatrix} 2 j -1 \\ b \end{pmatrix}  + \begin{pmatrix} 2 j - 1 \\ c - 1 \end{pmatrix} \begin{pmatrix} 2k-2 j -1 \\ b \end{pmatrix} \bigg\}
 \\
 &-(-1)^{a+b} \bigg\{ \begin{pmatrix} 2k- 2 j-1 \\ b \end{pmatrix} \begin{pmatrix} 2 j -1 \\ a \end{pmatrix} + \begin{pmatrix} 2 j - 1 \\ b \end{pmatrix} \begin{pmatrix} 2k-2 j -1 \\ a \end{pmatrix} \bigg\} \bigg]
 \end{split}
\end{equation}
for all $j \in \llbracket 1, \lfloor k/2 \rfloor \rrbracket$, if $c > 0$. 
Identity \eqref{eq:maincomp} follows directly from \eqref{eq:bcvmc0} together with \eqref{eq:binid}, whereas \eqref{eq:maincomp2} follows from \eqref{eq:bcvmc3} combined with \eqref{eq:binid}. 

We will now prove identities $\mathcalboondox{Eq}(\ell,\ell',\ell'')$ given by \eqref{eq:Cgen}, for $\ell, \ell', \ell'' \geq 1$. 
It will be more convenient to change the name of the parameters slightly, to simplify some expressions. 
Let $(\ell_{1},\ell_{2},\ell_{3}) \in \NN^{3}$ such that $\ell_{1} + \ell_{2} + \ell_{3} = 2k + 1$, and $k \geq 2$ is an integer. 
Assume $\ell_{1},\ell_{2},\ell_{3} \geq 1$. 
We consider now a generic expression of one of the three sums in the right member of \eqref{eq:Cgen}. 
More precisely, define $\mathscr{E}(\ell_{1},\ell_{2},\ell_{3})$ as
\begin{small}
\begin{equation}
\label{eq:l1l2l3} 
\begin{split}
\sum_{j=1}^{\lfloor \ell_{2}/2 \rfloor + \mathcalboondox{i}_{\ell_{1}} \mathcalboondox{i}_{\ell_{3}}} &C_{\ell_{1},2j-\mathcalboondox{i}_{\ell_{1}}} C_{\ell_{3},\ell_{2}- 2 j+1+\mathcalboondox{i}_{\ell_{1}}} 
\\
&= (-1)^{k} \sum_{j=1}^{\lfloor \ell_{2}/2 \rfloor + \mathcalboondox{i}_{\ell_{1}} \mathcalboondox{i}_{\ell_{3}}} \begin{pmatrix} \ell_{1}+2j-2 - \mathcalboondox{i}_{\ell_{1}} \\ \ell_{1}\ - 1 \end{pmatrix} 
\begin{pmatrix} \ell_{2}+\ell_{3}-2j-1 + \mathcalboondox{i}_{\ell_{1}} \\ \ell_{3}\ - 1 \end{pmatrix} 
\\
&\phantom{= (-1)^{k} \sum_{j=1}^{\lfloor \ell_{2}/2 \rfloor + \mathcalboondox{i}_{\ell_{1}} \mathcalboondox{i}_{\ell_{3}}} xx}
\frac{\mathcal{B}_{\ell_{1}+2j-1-\mathcalboondox{i}_{\ell_{1}}}}{(\ell_{1}+2 j -1-\mathcalboondox{i}_{\ell_{1}})!} 
 \frac{\mathcal{B}_{\ell_{2}+\ell_{3}-2j+\mathcalboondox{i}_{\ell_{1}}}}{(\ell_{2}+\ell_{3}-2 j+\mathcalboondox{i}_{\ell_{1}})!}
\\
&= (-1)^{k} \sum_{i=(\ell_{1}+\mathcalboondox{p}_{\ell_{1}})/2}^{k-(\ell_{3}+\mathcalboondox{p}_{\ell_{3}})/2} \begin{pmatrix} 2i-1 \\ \ell_{1}\ - 1 \end{pmatrix} 
\begin{pmatrix} 2k-2i-1 \\ \ell_{3}\ - 1 \end{pmatrix} 
\frac{\mathcal{B}_{2i}}{(2 i)!} \frac{\mathcal{B}_{2k-2i}}{(2k-2 i)!},
\end{split}
\end{equation}
\end{small}
\hskip -0.8mm where we have used \eqref{eq:newcij} in the first equality, and we have reindexed the last sum by means of $2 i = \ell_{1} + 2j -1 - \mathcalboondox{i}_{\ell_{1}}$. 
Note that 
\[     k - (\ell_{3}+\mathcalboondox{p}_{\ell_{3}})/2 = (\ell_{1}+\ell_{2}-1-\mathcalboondox{p}_{\ell_{3}})/2 \geq (\ell_{1}+\mathcalboondox{p}_{\ell_{1}})/2 \text{ if and only if } \ell_{2} \geq 1+\mathcalboondox{p}_{\ell_{1}}+\mathcalboondox{p}_{\ell_{3}},     \] 
which is always verified except for $\ell_{2}=1$ and $\ell_{1}, \ell_{3}$ odd. 
In the latter case, \eqref{eq:l1l2l3} trivially vanishes. 
Taking into account that a binomial coefficient vanishes if the nonnegative numerator is strictly smaller than the denominator, 
\eqref{eq:l1l2l3} implies that 
\begin{equation}
\label{eq:l1l2l3ex} 
 \mathscr{E}(\ell_{1},\ell_{2},\ell_{3}) = (-1)^{k} \sum_{i=1}^{k-1} \begin{pmatrix} 2i-1 \\ \ell_{1}\ - 1 \end{pmatrix} 
\begin{pmatrix} 2k-2i-1 \\ \ell_{3}\ - 1 \end{pmatrix} 
\frac{\mathcal{B}_{2i}}{(2 i)!} \frac{\mathcal{B}_{2k-2i}}{(2k-2 i)!}.
\end{equation}
Note that each summand in the right member of \eqref{eq:l1l2l3ex} vanishes if $\ell_{2}=1$ and $\ell_{1}, \ell_{3}$ odd, 
since at least one of the denominators of the two binomial coefficients is strictly greater than the corresponding nonnegative numerator. 
In consequence, \eqref{eq:l1l2l3ex} holds for all $\ell_{1},\ell_{2},\ell_{3} \geq 1$. 

We now decompose the sum in \eqref{eq:l1l2l3ex} for $i \in \llbracket 1, \lfloor \frac{k-1}{2} \rfloor \rrbracket$, $i = k/2$ if $k$ is even, and $i \in \llbracket \lceil \frac{k+1}{2} \rceil , k - 1 \rrbracket$. 
If we further reindex the sum corresponding to $i \in \llbracket \lceil \frac{k+1}{2} \rceil , k - 1 \rrbracket$ by replacing $i$ 
by $k-i$, we get that $\mathscr{E}(\ell_{1},\ell_{2},\ell_{3})$ is
\begin{equation}
\label{eq:l1l2l3bis1final} 
\begin{split}
&(-1)^{k} \sum_{i=1}^{\lfloor \frac{k-1}{2} \rfloor} \begin{pmatrix} 2i-1 \\ \ell_{1}\ - 1 \end{pmatrix} 
\begin{pmatrix} 2k-2i-1 \\ \ell_{3}\ - 1 \end{pmatrix} 
\frac{\mathcal{B}_{2i}}{(2 i)!} \frac{\mathcal{B}_{2k-2i}}{(2k-2 i)!}
+ \begin{pmatrix} k-1 \\ \ell_{1}\ - 1 \end{pmatrix} 
\begin{pmatrix} k-1 \\ \ell_{3}\ - 1 \end{pmatrix} 
\frac{\mathcal{B}_{k}^{2}}{(k!)^{2}}
\\
&+(-1)^{k} \sum_{i=1}^{\lfloor \frac{k-1}{2} \rfloor} \begin{pmatrix} 2i-1 \\ \ell_{3}\ - 1 \end{pmatrix} 
\begin{pmatrix} 2k-2i-1 \\ \ell_{1}\ - 1 \end{pmatrix} 
\frac{\mathcal{B}_{2i}}{(2 i)!} \frac{\mathcal{B}_{2k-2i}}{(2k-2 i)!},
\end{split}
\end{equation}
where the second term in the first line vanishes if $k \geq 2$ is odd, since $\mathcal{B}_{k}= 0$ in that case. 

We then conclude that \eqref{eq:Cgen} for $\ell_{1}, \ell_{2}, \ell_{3} \geq 1$ is $(-1)^{k}$ times 
\begin{equation}
\label{eq:Cgenex} 
\begin{split}
     &\sum_{p=1}^{3} (-1)^{\ell_{p}} \begin{pmatrix} 2 k - 1 \\ \ell_{p} - 1 \end{pmatrix} \frac{\mathcal{B}_{2k}}{(2 k)!}
     \\
     &= \sum_{p=1}^{3} (-1)^{\ell_{p}+1} \bigg[ \sum_{i=1}^{\lfloor \frac{k-1}{2} \rfloor} \bigg(\begin{pmatrix} 2i-1 \\ \ell_{q} - 1 \end{pmatrix} 
\begin{pmatrix} 2k-2i-1 \\ \ell_{r}\ - 1 \end{pmatrix} 
\\
&\phantom{= \sum_{p=1}^{3} (-1)^{\ell_{p}+1} \bigg[ \sum_{i=1}^{\lfloor \frac{k-1}{2} \rfloor} \bigg(}+ \begin{pmatrix} 2i-1 \\ \ell_{r} - 1 \end{pmatrix} 
\begin{pmatrix} 2k-2i-1 \\ \ell_{q}\ - 1 \end{pmatrix} \bigg)
\frac{\mathcal{B}_{2i}}{(2 i)!} \frac{\mathcal{B}_{2k-2i}}{(2k-2 i)!} 
\\
&\phantom{= \sum_{p=1}^{3} (-1)^{\ell_{p}+1} \bigg[ xx }+ \begin{pmatrix} k-1 \\ \ell_{q}\ - 1 \end{pmatrix} 
\begin{pmatrix} k-1 \\ \ell_{r}\ - 1 \end{pmatrix} 
\frac{\mathcal{B}_{k}^{2}}{(k!)^{2}}\Bigg],
\end{split}
\end{equation}
where the indices $q, r \in \{ 1, 2, 3\}$ are taken so that $\{ 1,2,3\} = \{ p,q,r\}$ and $q < r$, and we note that the last term vanishes if $k \geq 2$ is odd since $\mathcal{B}_{k}= 0$ in that case. 

We now claim that $\mathcalboondox{Eq}(\ell,\ell',\ell'')$ is exactly given as 
\begin{equation}
\label{eq:ide}
     - \frac{(-1)^{k}}{(2k)!} \bigg( \frac{\ell}{2 k} \operatorname{Eq}(\ell''-1,\ell'-1,\ell) + \frac{\ell'}{2 k} \operatorname{Eq}(\ell''-1,\ell-1,\ell') \bigg),
\end{equation}
for all $\ell, \ell', \ell'' \geq 1$, where we recall that $2 k + 1 = \ell + \ell' + \ell''$ for an integer $k \geq 2$. 
This follows directly from \eqref{eq:maincomp}, \eqref{eq:maincomp2} and \eqref{eq:Cgenex}. 
Since the equations on \eqref{eq:ide} are verified, due to \cite{BCM17}, Thm. 2.4, equation $\mathcalboondox{Eq}(\ell,\ell',\ell'')$ 
is verified as well. 
The lemma is thus proved.
\end{proof}


\begin{bibdiv}
\begin{biblist}

\bib{AKSM}{article}{
   author={Alekseev, A.},
   author={Kosmann-Schwarzbach, Y.},
   author={Meinrenken, E.},
   title={Quasi-Poisson manifolds},
   journal={Canad. J. Math.},
   volume={54},
   date={2002},
   number={1},
   pages={3--29},
}

\bib{AMM}{article}{
   author={Alekseev, Anton},
   author={Malkin, Anton},
   author={Meinrenken, Eckhard},
   title={Lie group valued moment maps},
   journal={J. Differential Geom.},
   volume={48},
   date={1998},
   number={3},
   pages={445--495},
}

\bib{AKKN}{article}{
   author={Alekseev, Anton},
   author={Kawazumi, Nariya},
   author={Kuno, Yusuke},
   author={Naef, Florian},
   title={The Goldman-Turaev Lie bialgebra in genus zero and the
   Kashiwara-Vergne problem},
   journal={Adv. Math.},
   volume={326},
   date={2018},
   pages={1--53},
}

\bib{AIK14}{book}{
   author={Arakawa, Tsuneo},
   author={Ibukiyama, Tomoyoshi},
   author={Kaneko, Masanobu},
   title={Bernoulli numbers and zeta functions},
   series={Springer Monographs in Mathematics},
   note={With an appendix by Don Zagier},
   publisher={Springer, Tokyo},
   date={2014},
   pages={xii+274},
}

\bib{Art}{thesis}{
    author = {Artamonov, Semen},
    title = {Generalized quasi {P}oisson structures and noncommutative integrable systems},
    place = {New Jersey},
    date = {2018},
    type = {Ph.D. Thesis},
    pages={vii+75},
    }  
   
\bib{Boalch}{article}{
   author={Boalch, P. P.},
   title={Geometry and braiding of Stokes data; fission and wild character
   varieties},
   journal={Ann. of Math. (2)},
   volume={179},
   date={2014},
   number={1},
   pages={301--365},
}

\bib{BCM17}{article}{
   author={Buijs, Urtzi},
   author={Carrasquel-Vera, Jos\'{e} G.},
   author={Murillo, Aniceto},
   title={The gauge action, DG Lie algebras and identities for Bernoulli
   numbers},
   journal={Forum Math.},
   volume={29},
   date={2017},
   number={2},
   pages={277--286},
}

\bib{CF}{article}{
   author={Chalykh, Oleg},
   author={Fairon, Maxime},
   title={Multiplicative quiver varieties and generalised
   Ruijsenaars-Schneider models},
   journal={J. Geom. Phys.},
   volume={121},
   date={2017},
   pages={413--437},
}

\bib{CBEG}{article}{
   author={Crawley-Boevey, William},
   author={Etingof, Pavel},
   author={Ginzburg, Victor},
   title={Noncommutative geometry and quiver algebras},
   journal={Adv. Math.},
   volume={209},
   date={2007},
   number={1},
   pages={274--336},
}

\bib{FH19}{article}{
   author={Fern\'andez, David},
   author={Herscovich, Estanislao},
   title={Cyclic $A_{\infty}$-algebras and double Poisson algebras},
   pages={27},
   date={2019},
   eprint={https://arxiv.org/pdf/1902.00787.pdf},
   note={Accepted for publication in J. Noncommut. Geom.}
}

\bib{IK17}{article}{
   author={Iyudu, Natalia},
   author={Kontsevich, Maxim},
   title={Pre-{C}alabi-{Y}au algebras as noncommutative {P}oisson structures},
   pages={16},
   date={2018},
   eprint={http://preprints.ihes.fr/2018/M/M-18-04.pdf},
}

\bib{IK19}{article}{
	author={Iyudu, Natalia},
	author={Kontsevich, Maxim},
	title={Pre-Calabi-Yau algebras and noncommutative calculus on higher cyclic Hochschild cohomology}, 
	pages={33},
	date={2020},
	eprint={https://arxiv.org/abs/2011.11888},
}

\bib{IKV19}{article}{
	author={Iyudu, Natalia},
	author={Kontsevich, Maxim},
	author={Vlassopoulos, Yannis},
	title={Pre-Calabi-Yau algebras as noncommutative Poisson structures},
	journal={J. Algebra},
	volume={567},
	date={2021},
	pages={63--90},
}

\bib{KPS17}{article}{
    AUTHOR = {Katzarkov, Ludmil},
    AUTHOR = {Pandit, Pranav},
AUTHOR = {Spaide, Theodore},
     TITLE = {Calabi-{Y}au structures, spherical functors, and shifted symplectic structures},
   pages={60},
   date={2017},
   eprint={https://arxiv.org/abs/1701.07789},
}

\bib{Ko13}{article}{
   author={Kontsevich, Maxim},
   title={Weak {C}alabi-{Y}au algebras},
   pages={},
   date={2013},
   eprint={https://math.berkeley.edu/~auroux/miami2013-notes},
   note={Notes taken from the talk at \emph{Conference on Homological Mirror Symmetry}}
}

\bib{KoRo}{article}{
   author={Kontsevich, Maxim},
   author={Rosenberg, Alexander L.},
   title={Noncommutative smooth spaces},
   conference={
      title={The Gelfand Mathematical Seminars, 1996--1999},
   },
   book={
      series={Gelfand Math. Sem.},
      publisher={Birkh\"{a}user Boston, Boston, MA},
   },
   date={2000},
   pages={85--108},
}

\bib{KoSo09}{article}{
   author={Kontsevich, M.},
   author={Soibelman, Y.},
   title={Notes on $A_\infty$-algebras, $A_\infty$-categories and
   non-commutative geometry},
   conference={
      title={Homological mirror symmetry},
   },
   book={
      series={Lecture Notes in Phys.},
      volume={757},
      publisher={Springer, Berlin},
   },
   date={2009},
   pages={153--219},
}

\bib{KV18}{article}{
   author={Kontsevich, Maxim},
   author={Vlassopoulos, Yiannis},
   title={Pre-Calabi-Yau algebras and topological quantum field theories},
   pages={76 pp.},
   date={2018},
}

\bib{LS14}{article}{
	author={Lawrence, Ruth},
	author={Sullivan, Dennis},
	title={A formula for topology/deformations and its significance},
	journal={Fund. Math.},
	volume={225},
	date={2014},
	number={1},
	pages={229--242},
}

\bib{MT}{article}{
   author={Massuyeau, Gw\'{e}na\"{e}l},
   author={Turaev, Vladimir},
   title={Quasi-Poisson structures on representation spaces of surfaces},
   journal={Int. Math. Res. Not. IMRN},
   date={2014},
   number={1},
   pages={1--64},
}


\bib{Sei12}{article}{
   author={Seidel, Paul},
   title={Fukaya $A_\infty$-structures associated to Lefschetz fibrations.
   I},
   journal={J. Symplectic Geom.},
   volume={10},
   date={2012},
   number={3},
   pages={325--388},
}

\bib{Sei17}{article}{
   author={Seidel, Paul},
   title={Fukaya $A_\infty$-structures associated to Lefschetz fibrations.
   II},
   conference={
      title={Algebra, geometry, and physics in the 21st century},
   },
   book={
      series={Progr. Math.},
      volume={324},
      publisher={Birkh\"{a}user/Springer, Cham},
   },
   date={2017},
   pages={295--364},
}

\bib{Sta63}{article}{
   author={Stasheff, James Dillon},
   title={Homotopy associativity of $H$-spaces. I, II},
  journal={Trans. Amer. Math. Soc. 108 (1963), 275-292; ibid.},
   volume={108},
   date={1963},
   pages={293--312},
}

\bib{TZ16}{article}{
   author={Tradler, Thomas},
   author={Zeinalian, Mahmoud},
   title={Algebraic string operations},
   journal={$K$-Theory},
   volume={38},
   date={2007},
   number={1},
   pages={59--82},
}

\bib{VdB}{article}{
   author={Van den Bergh, Michel},
   title={Double Poisson algebras},
   journal={Trans. Amer. Math. Soc.},
   volume={360},
   date={2008},
   number={11},
   pages={5711--5769},
}

\bib{q-VdB}{article}{
   author={Van den Bergh, Michel},
   title={Non-commutative quasi-Hamiltonian spaces},
   conference={
      title={Poisson geometry in mathematics and physics},
   },
   book={
      series={Contemp. Math.},
      volume={450},
      publisher={Amer. Math. Soc., Providence, RI},
   },
   date={2008},
   pages={273--299},
}

\bib{Ye18}{article}{
   author={Yeung, Wai-Kit},
   title={Pre-Calabi-Yau structures and moduli of representations},
   pages={81},
   date={2018},
   eprint={https://arxiv.org/abs/1802.05398},
}

\end{biblist}
\end{bibdiv}
\begin{small}
\vskip 0.5cm
\noindent \scshape{David Fern\'andez: Fakult\"at f\"ur Mathematik, Universit\"at Bielefeld, 
\\
33501 Bielefeld, Germany.}
\\
\normalfont\textit{E-mail address}: \href{mailto:David.Fernandez@math.uni-bielefeld.de}{\texttt{david.fernandez@math.uni-bielefeld.de}}.

\medskip 

\noindent \scshape{Estanislao Herscovich: Institut Fourier, Universit\'e Grenoble Alpes, 38610 Gi\`eres, France.}
\\
\normalfont\textit{E-mail address}: \href{mailto:Estanislao.Herscovich@univ-grenoble-alpes.fr}{\texttt{Estanislao.Herscovich@univ-grenoble-alpes.fr}}. 
\end{small}
\end{document}